\documentclass[a4paper,11pt,reqno,noindent]{amsart}
\usepackage[centertags]{amsmath}
\usepackage{amsfonts,amssymb,amsthm} 
\usepackage{hyperref}

 \hypersetup{
     pdfmenubar=false,        
     pdfnewwindow=true,      
     colorlinks=false,       
     linkcolor=blue,          
     citecolor=blue,        
     filecolor=magenta,      
     urlcolor=cyan           
 }
\usepackage{graphicx} 

\usepackage[english]{babel}
\usepackage{newlfont}
\usepackage{color}
\usepackage[body={15cm,21.5cm},centering]{geometry} 
\usepackage{fancyhdr}
\pagestyle{fancy}
\usepackage{esint}
\usepackage{enumerate}

\fancyhead[RO,LE,LO,RE]{}
\fancyhead[CO]{\scriptsize\rightmark}
\fancyhead[CE]{\scriptsize\leftmark}

\setlength{\headheight}{12pt}  
\setlength{\parindent}{0pt} 
\setlength{\headsep}{25pt} 
\usepackage[active]{srcltx}

\newtheorem{theorem}{Theorem}[section]
\newtheorem{lemma}[theorem]{Lemma}

\newtheorem{remark}[theorem]{Remark}
\newtheorem{definition}[theorem]{Definition}
\newtheorem{notation}[theorem]{Notation}

\theoremstyle{definition}

\newcommand{\loc}{\mathrm{loc}}

\newcommand{\diam}{\mathrm{diam}\,}

\newcommand{\Lip}{\mathrm{Lip}\,}

\newcommand{\dist}{\operatorname{dist}}
\newcommand{\supp}{\operatorname{supp}}

\newcommand{\eq}{\stackrel{.}{=}}
\newcommand{\Kappa}{\mathrm{K}}
\newcommand{\e}{\varepsilon}

\renewcommand{\S}{{\mathcal S}}
\newcommand{\R}{\mathbb{R}}

\newcommand{\Z}{\mathbb{Z}}
\newcommand{\N}{\mathbb{N}}

\renewcommand{\d}{\mathrm{d}}
\renewcommand{\L}{\mathcal{L}}

\newcommand\sgn{\mathrm{sgn}}

\renewcommand{\H}{\mathcal{H}}
\newcommand{\Sb}{\mathbf{S}}
\renewcommand{\i}{\mathrm{i}}

\newcommand\wto{\rightharpoonup}

\newcommand{\degd}{{\mathrm{deg}}^\partial}
    
\renewcommand{\div}{\mathrm{div}\,} 
\newcommand{\db}{\mathrm{dim}_{\mathrm{box}}\,}            
\newcommand{\bd}{\mathrm{dim}_{\mathrm{box}}\,}


\synctex=1


\title{Integrability of the Brouwer degree for irregular arguments}
\date{\today}
\author[H. Olbermann] {Heiner Olbermann}
\date{\today}
\address[Heiner Olbermann]{Universit\"at Leipzig, Germany}
\email{heiner.olbermann@math.uni-leipzig.de}

\begin{document}

\maketitle
\begin{abstract}
We prove that the Brouwer degree $\deg(u,U,\cdot)$ for a function $u\in C^{0,\alpha}( U;\R^n)$ is in $L^p(\R^n)$ if $1\leq p<\frac{n\alpha}d$, where $U\subset \R^n$ is open and bounded and $d$ is the box dimension of $\partial U$. This is supplemented by a theorem showing that $u_j\to u$ in $C^{0,\alpha}(U;\R^n)$ implies $\deg(u_j,U,\cdot)\to \deg(u,U,\cdot)$ in $L^p(\R^n)$ for the  parameter regime $1\leq p<\frac{n\alpha}d$, while there exist convergent sequences $u_j\to u$ in $C^{0,\alpha}(U;\R^n)$ such that $\|\deg(u_j,U,\cdot)\|_{L^p}\to \infty$ for the opposite regime $p>\frac{n\alpha}d$.
\end{abstract}

\section{Introduction}
The Brouwer degree is a very useful object in
nonlinear analysis, in particular in problems with a geometric background. 
One notable example of its use is the $C^{1,\alpha}$ isometric
immersion problem (see \cite{conti2012h}), where the integrability properties of
the degree are crucial. \\
For a Lipschitz function $u:U\to\R^n$, where $U\subset\R^n$ is open and
bounded, the integrability of the Brouwer degree is as good as one could hope,
namely, there is the classical ``change of variables''-type formula
\begin{equation}
\int_U \varphi(u(x))\det D u(x) \d x=\int_{\R^n}\varphi(z)\deg(u,U,z)\d z\label{eq:17}
\end{equation}
for all $\varphi\in L^1(\R^n)$ (see e.g.~\cite{MR0257325}). However, when
the regularity of $u$ is worse -- only $C^{0,\beta}$ for some $0<\beta<1$ -- it
is much less clear how to deal with integrals as the one on the right hand side
above. To obtain information about such integrals, we will use the fact that
$\deg(u,U,y)\d y$ is an exact form (see e.g.~\cite{MR1373430}) and try to
apply Stokes' Theorem to write it as a boundary integral. This in turn needs
some regularity of the boundary $\partial U$. Usually, one needs $U$ to be a set of finite perimeter to
be able to apply Stokes' Theorem. In \cite{MR1119189}, it has been shown that  if the integrand is
smooth enough, then Stokes' Theorem may also be applied to sets with rougher
boundary. The first aim of the present paper is to adapt these ideas to the case
of the Brouwer degree and show that  $\deg(u,U,\cdot)$ is integrable if  $u$ is smooth enough in terms of H\"older regularity, and  $\partial U$ is smooth
enough in terms of its box dimension. We will show that there is a trade-off
between these two types of regularity. 
\begin{theorem}
\label{thm:main1}
Let $0<\alpha<1$  and  $n-1<d<n$ such that $n\alpha>d$, and let 
 $U\subset \R^n$ be open and bounded with $\db \partial U=d$. 
 Furthermore , let $u\in C^{0,\alpha}(U;\R^n)$. Then $\deg(u,U,\cdot)\in L^p(\R^n)$ for all
  $1\leq p<\frac{n\alpha}{d}$, and for all $p\in (1,\frac{n\alpha}{d})$, there exists a constant $C=C(n, U,\alpha,d,p)$ such that
\begin{equation*}
  \|\deg(u,U,\cdot)\|_{L^p}\leq C \|u\|_{C^{0,\alpha}(U;\R^n)}^{n/p}\,.
\end{equation*}
\end{theorem}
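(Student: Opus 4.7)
The strategy is to combine mollification of $u$, inner approximation of $U$ by a smooth domain, and a Stokes-type representation of the degree, controlling the error in terms of the box-counting cover of $\partial U$ and the H\"older regularity of $u$.

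First I extend $u$ to $\R^n$ with comparable $C^{0,\alpha}$ norm, and mollify to obtain smooth $u_\e$ with $\|u_\e\|_{C^{0,\alpha}} \le C\|u\|_{C^{0,\alpha}}$ and $u_\e \to u$ uniformly on $\overline U$. Since the Brouwer degree is stable under uniform convergence at points outside the image of $\partial U$, a Fatou argument reduces the theorem to an $\e$-uniform estimate for the smooth $u_\e$. By duality, it suffices to bound
\[
  \int \phi(y) \deg(u_\e, U, y)\,dy = \int_U u_\e^*(\phi \, dz^1 \wedge \cdots \wedge dz^n) = \int_U u_\e^*\, d\Phi
\]
for $\phi \in C_c^\infty(\R^n)$ with $\|\phi\|_{L^{p'}} \le 1$, where $\Phi$ is a fixed $(n-1)$-form primitive of $\phi \, dz^1 \wedge \cdots \wedge dz^n$ constructed via Riesz potentials, so that $\Phi$ satisfies localized $L^\infty$ (respectively H\"older) bounds on balls of radius $r$ of order $r^{n/p^*}\|\phi\|_{L^{p'}}$ with $1/p^* = 1/p' - 1/n$.

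Next, for $\delta>0$ and some $d'\in(d,n\alpha)$ close to $d$, I invoke the box-dimension hypothesis to cover $\partial U$ by $N_\delta \le C\delta^{-d'}$ balls $B_i$ of radius $\delta$, and pick a smooth inner approximation $V_\delta \subset U$ with $U\setminus V_\delta \subset \bigcup_i B_i$ and $|U\setminus V_\delta| \le C \delta^{n-d'}$. Applying Stokes on the smooth $V_\delta$ gives
\[
  \int_U u_\e^*\, d\Phi = \int_{\partial V_\delta} u_\e^* \Phi + \int_{U \setminus V_\delta} u_\e^*\, d\Phi ,
\]
and the strip integral is controlled by $|u_\e(U\setminus V_\delta)| \le C\|u\|_{C^{0,\alpha}}^n \delta^{n\alpha - d'}$ (the H\"older image of each covering ball) combined with H\"older's inequality, which remains admissible precisely for $p < n\alpha/d$.

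For the boundary integral I cover $\partial V_\delta$ patch-wise by the $B_i$. On each $B_i$, the image $u_\e(B_i)$ lies in a ball of radius $r_i = C\|u\|_{C^{0,\alpha}} \delta^\alpha$, so subtracting a constant-coefficient $(n-1)$-form $\bar\Phi_i$ adapted to this image ball reduces the integral of $u_\e^* \Phi$ on $B_i$ to an oscillation piece controlled by the H\"older norm of $\Phi$ on $B(u_\e(x_i),r_i)$, plus a constant-form piece whose pullbacks are exact in the source (since $du_\e^{i_1}\wedge\cdots\wedge du_\e^{i_{n-1}} = d(u_\e^{i_1} du_\e^{i_2}\wedge\cdots\wedge du_\e^{i_{n-1}})$) and therefore telescope along the closed surface $\partial V_\delta$. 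Summing the $N_\delta \lesssim \delta^{-d'}$ patch contributions and optimizing in $\delta$ and $d' \to d^+$ produces the claimed bound $\|\deg(u_\e,U,\cdot)\|_{L^p} \le C \|u\|_{C^{0,\alpha}}^{n/p}$. The main obstacle is precisely this boundary estimate: the naive bound using $\|Du_\e\|_\infty \sim \e^{\alpha-1}$ diverges as $\e \to 0$, so the cancellation furnished by the constant-coefficient primitives and the closedness of $\partial V_\delta$ is essential. This is the place where the Harrison-type Stokes theorem for rough chains in the spirit of \cite{MR1119189} enters: the $\alpha$-H\"older exponent of $u_\e$ must compensate the fractal scaling $\delta^{-d}$ of the cover, and the balance of these produces the sharp threshold $p < n\alpha/d$.
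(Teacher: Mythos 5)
Your overall strategy (mollify, apply Stokes on a smooth inner approximation, and control the boundary integral by subtracting locally constant primitives, in the spirit of Harrison--Norton) is the right one, and your exponent bookkeeping for the oscillation piece is correct: writing $\Sigma_i:=B_i\cap\partial V_\delta$, each patch contributes $\lesssim\delta^{\alpha(1-n/p')}\cdot\delta^{n-1}\varepsilon^{(n-1)(\alpha-1)}\|u\|_{C^{0,\alpha}}^{n/p}$, which summed over $\delta^{-d'}$ patches with $\varepsilon\sim\delta$ gives $\delta^{n\alpha/p-d'}$. Two steps, however, are not justified, and the second is a genuine gap. First, the strip term: $\int_{U\setminus V_\delta}u_\varepsilon^*\,\d\Phi=\int_{U\setminus V_\delta}\varphi(u_\varepsilon)\det Du_\varepsilon$ is \emph{not} controlled by $\mathcal{L}^n\bigl(u_\varepsilon(U\setminus V_\delta)\bigr)$; the area formula produces the Banach indicatrix $\#u_\varepsilon^{-1}(y)$, which is unbounded as $\varepsilon\to0$. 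This is repairable, since the strip term only needs to vanish rather than be bounded in terms of $\|\varphi\|_{L^{p'}}$: the crude estimate $\|\varphi\|_{L^\infty}\|Du_\varepsilon\|_\infty^n\,\mathcal{L}^n(U\setminus V_\delta)\lesssim\|\varphi\|_{L^\infty}\delta^{n\alpha-d'}$ with $\varepsilon=\delta$ suffices for fixed $\varphi\in C_c^\infty$.

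The serious gap is the claim that the constant-coefficient pieces ``telescope along the closed surface $\partial V_\delta$''. The subtracted forms $\bar\Phi_i$ differ from patch to patch, so $\sum_i\int_{\Sigma_i}u_\varepsilon^*\bar\Phi_i$ is not the integral of a single closed form over a cycle and does not vanish. If you replace all $\bar\Phi_i$ by one global constant form, the remaining oscillation of $\Phi$ is $O(\|u\|_{C^{0,\alpha}}^{1-n/p'})$ rather than $O(\delta^{\alpha(1-n/p')})$, and the resulting bound $\delta^{(n-1)\alpha-d'}$ diverges because $d'>n-1>(n-1)\alpha$. If instead you integrate each $\int_{\Sigma_i}u_\varepsilon^*\bar\Phi_i$ by parts to $\partial\Sigma_i$ and pair adjacent patches, the differences $|\bar\Phi_i-\bar\Phi_j|\lesssim\delta^{\alpha(1-n/p')}$ are of the right size, but the codimension-two interface integrals carry a factor $\|u_\varepsilon\|_\infty\|Du_\varepsilon\|_\infty^{n-2}\mathcal{H}^{n-2}(\partial\Sigma_i)$, and their total scales like $\delta^{n\alpha/p-\alpha-d'}$, which diverges whenever $n\alpha/p<d+\alpha$, i.e.\ in part of the admissible range. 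Closing this would require an $(n-1)$-fold iterated summation by parts over the entire cell complex of $\partial V_\delta$, subtracting local constants from the components of $u_\varepsilon$ as well at each stage; none of this is in your proposal. The paper sidesteps the issue entirely by using the Whitney decomposition of $U$ into full-dimensional cubes: each $\partial Q$ is a cycle, so the integral over $\partial Q$ of \emph{any} closed $(n-1)$-form vanishes, and each cube is estimated independently via the distance of $M(V^j)$ to closed forms (the $X_0^{n-1}$ quotient norm in Lemma \ref{lem:intwelldef}), with the multi-scale count $\#W_k\lesssim 2^{kd}$ of Theorem \ref{thm:Nkdb} replacing your single-scale cover; the needed smallness of that distance at scale $\diam Q$ is produced by multilinear interpolation (Lemma \ref{lem:detinter}) rather than by mollification.
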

In fact, we will prove this theorem by giving a meaning to the left hand side in
the change of variables formula \eqref{eq:17}, with the regularity of $u$, $U$ as
stated in the theorem. We will show how to make sense of the left hand side for
$u\in
C^{0,\alpha}(U;\R^n)$ and $\varphi\in L^{p'}$ where $p'$ is defined by requiring $p^{-1}+(p')^{-1}=1$. The main idea is to represent $\varphi(u(x))\det Du(x)$
as a sum of Jacobian determinants, interpreted in a weak sense. There are two  crucial tools that will allow
us  to do so. First, we use multi-linear (real) interpolation for a suitable
weak definition of the Jacobian determinant, see Lemma
\ref{lem:detinter}. The statement of this lemma can be viewed as a variant  of
Theorem 3 in the paper
\cite{MR2810795} by Brezis and Nguy\^en, which relies  on an idea by Bourgain, Brezis  and
Mironescu \cite{MR1781527,MR2075883}. Second, we use the following trick: Let $\psi$ be a solution of
$\div\psi=\varphi$. Set $U^i:=(u_1,\dots,u_{i-1},\psi_i\circ
u,u_{i+1},\dots,u_n)$. Then we have
\[
\begin{split}
  \det D U^i(x)\d x=&\d u_1(x)\wedge\dots\wedge\d u_{i-1}(x)\wedge \d (\psi_i\circ
  u)(x)\wedge \dots\wedge \d u_n(x)\\
  =& \partial_i \psi_i(u(x)) \d u_1(x)\wedge\dots\wedge \d u_n(x) \\
=& \partial_i \psi_i(u(x)) \det
  Du (x)\d x\,.
\end{split}
\]
Hence, we get 

\begin{equation}
\begin{split}
  \varphi(u(x))\det Du (x)=&\sum_{i=1}^n\partial_i\psi_i(u(x))\det
  Du(x)\\
  =&\sum_{i=1}^n \det DU^i(x),
\end{split}\label{eq:9}
\end{equation}
which is the sought-for representation as a sum of Jacobian determinants.\\
\\
We have already noted that by the change of variables formula \eqref{eq:17}, the
integrability of the Brouwer degree is closely related to the weakest space for
which we can define the distributional Jacobian determinant $[Ju]$. The question
for the weakest space in which $[Ju]$ can be defined has a long history,
starting with the work of Morrey\cite{MR2492985}, Reshetnyak \cite{revsetnjak1968weak} and Ball \cite{MR0475169}, and with
important contributions by many researchers, see e.g.~\cite{MR0322651,MR1062920,MR1182504,MR1310951,MR1354598,MR1422201,MR1886622}, and
references therein. In the recent article \cite{MR2810795}, this question has
been answered by the use of fractional Sobolev spaces. In this reference, 
$[Ju]$ has been defined as an element of the dual of $C^1$ for $u\in
W^{(n-1)/n,n}$. This result contains most of the previously known ones, such as
the definition of $[Ju]$ for $u\in W^{1,n-1}\cap L^\infty$ or $u\in
W^{1,n^2/(n+1)}$, see \cite{MR0475169}.\\ 
\\
Paralleling  the methods from
\cite{MR2810795}, or using the results from \cite{MR2745198}, one can define $[Ju]$ as an element of $(C^{0,\alpha})^*$
for $u\in C^{0,\alpha}$ and $\alpha>n/(n+1)$. Using this definition, formula \eqref{eq:17} has a well defined meaning for
$\varphi\in C^1$, since then $\varphi\circ u\in C^{0,\alpha}$. Note however that
our treatment using the relation \eqref{eq:9}, which exploits the special
structure of the test function, 
gives meaning to \eqref{eq:17} for a much larger class of test
functions. In particular, if we assume  that $U$ has Lipschitz boundary, then we
will be able to give a well-defined meaning to the left hand side in
\eqref{eq:17} for $u\in C^{0,\alpha}$ and  $\varphi\in L^{p'}$ with
$\alpha/p>(n-1)/n$ (where $p^{-1}+(p')^{-1}=1$), which coincides with the right
hand side.\\
\\
The question whether there exist $\alpha$-H\"older functions whose mapping
degree is not in $L^p$ for $n\alpha<pd$ is not addressed here. Note however that for
$n\alpha<d$, the image of the boundary $u(\partial U)$ has in general
non-vanishing Lebesgue measure, and hence $\deg(u,U,\cdot)$ is not defined on a
set of positive measure (cf.~Lemma \ref{lem:bdrydim}).\\
\\
As a supplement to Theorem \ref{thm:main1}, we show that convergence in
$C^{0,\alpha}$ implies convergence of the associated mapping degrees in $L^p$ if
$n\alpha>pd$, while for the opposite regime $n\alpha<pd$, there exist sequences
that converge to $0$ in $C^{0,\alpha}$ whose mapping degrees diverge in $L^p$.
\begin{theorem}
\label{thm:convcor}
Let $0<\alpha<1$, $n-1<d<n$, $1\leq p<\infty$.
\begin{itemize}
\item[(i)] If $p<\frac{n\alpha}{d}$,  $U\subset \R^n$ is open and bounded
  with $\db \partial U=d$, and  $u_k\in  C^{0,\alpha}(
U;\R^n)$ with $u_k\to u $ in
$C^{0,\alpha}(U;\R^n)$, then $\deg(u_k,U,\cdot)\to \deg(u,U,\cdot)$
in $L^p(\R^n)$.
\item[(ii)] If $p>\frac{n\alpha}{d}$, there exist an open bounded set $U\subset
  \R^n$ with $\db \partial U=d$, and a sequence $u_k\in  C^{0,\alpha}(
U;\R^n)$ with $\deg(u_k,U,\cdot)\in L^p(\R^n)$, $u_k\to 0 $ in
$C^{0,\alpha}(U;\R^n)$ and $\|\deg(u_k,U,\cdot)\|_{L^p}\to\infty$.
\end{itemize}
\end{theorem}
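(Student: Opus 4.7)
For part (i), the plan is to combine Theorem~\ref{thm:main1} with a Vitali-type equi-integrability argument. Fix any $q\in(p,n\alpha/d)$. By Theorem~\ref{thm:main1} applied to $u_k$ and to $u$, the family $\{\deg(u_k,U,\cdot)\}_k\cup\{\deg(u,U,\cdot)\}$ is bounded in $L^q(\R^n)$ (using that $\|u_k\|_{C^{0,\alpha}}$ is bounded); since $u_k\to u$ uniformly on $U$ and $U$ is bounded, all these degree functions are supported in a common bounded set. The hypothesis $1\leq p<n\alpha/d$ forces $n\alpha>d$, so the H\"older map $u$ sends $\partial U$ (box dimension $d$) to a set of Hausdorff dimension at most $d/\alpha<n$, and in particular $u(\partial U)$ is Lebesgue null. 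For every $z$ outside this null set, $\dist(z,u(\partial U))>0$, whence by uniform convergence and the homotopy invariance of the degree $\deg(u_k,U,z)=\deg(u,U,z)$ for all $k$ sufficiently large; thus $\deg(u_k,U,\cdot)\to\deg(u,U,\cdot)$ a.e. Equi-integrability of $\{|\deg(u_k,U,\cdot)|^p\}$ follows from the uniform $L^q$ bound via H\"older's inequality ($\int_E|f_k|^p\leq\|f_k\|_{L^q}^p|E|^{1-p/q}$), and combined with the common bounded support this lets Vitali's theorem conclude convergence in $L^p$.

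For part (ii), the plan is to construct a domain with fractal boundary and many localized ``winding peaks'' whose combined degree is large while their H\"older norm stays small. Take $U=(0,1)^n\setminus K$ where $K=C\times[0,1]^{n-1}$ and $C\subset[0,1]$ is a self-similar Cantor set of dimension $d-(n-1)\in(0,1)$; then $\db\partial U=d$, and for every small $r>0$ there is a packing of $N_r\gtrsim r^{-d}$ disjoint balls $B(x_i,r)\subset U$ with pairwise centre distance $\geq 3r$ (the lower bound on $N_r$ is a direct consequence of the $r$-tube around $K$ having Lebesgue measure $\sim r^{n-d}$). Fix a smooth $\phi\colon\R^n\to\R^n$ supported in $B(0,1)$ with $\phi(y)=y$ on $B(0,1/2)$, so $\deg(\phi,B(0,1),z)=1$ for every $z\in B(0,1/2)$. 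For each $k$, pick $r_k\downarrow 0$, a corresponding packing $\{B(x_i^k,r_k)\}_{i=1}^{N_k}$, choose $\gamma\in(0,\,pd/n-\alpha)$ (possible since $p>n\alpha/d$), set $\mu_k=r_k^{\alpha+\gamma}$, and define
\[
u_k(x)=\begin{cases}\mu_k\,\phi\bigl((x-x_i^k)/r_k\bigr),&x\in B(x_i^k,r_k),\\ 0,&\text{otherwise.}\end{cases}
\]

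A short scaling computation (using the separation $\geq 3r_k$ and that $\phi$ vanishes on $\partial B(0,1)$ to handle pairs of points in different balls or outside all balls) gives $\|u_k\|_{C^{0,\alpha}(U;\R^n)}\leq C\mu_k r_k^{-\alpha}=Cr_k^\gamma\to 0$. By the excision property of the degree (using that $u_k$ vanishes outside the disjoint balls) and the choice of $\phi$, for a.e.\ $z\in B(0,\mu_k/2)$,
\[
\deg(u_k,U,z)=\sum_{i=1}^{N_k}\deg\bigl(u_k|_{B(x_i^k,r_k)},B(x_i^k,r_k),z\bigr)=N_k.
\]
Hence $\|\deg(u_k,U,\cdot)\|_{L^p}\gtrsim N_k\mu_k^{n/p}\gtrsim r_k^{-d+n(\alpha+\gamma)/p}$, whose exponent is strictly negative by the choice of $\gamma$; thus the $L^p$ norms diverge. (Each $\deg(u_k,U,\cdot)$ lies in $L^p(\R^n)$ because $u_k$ is piecewise Lipschitz with compactly supported image.)

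The main obstacle is the geometric/combinatorial input of part (ii): verifying that the chosen $K$ actually realises $\db\partial U=d$ and that the $r$-tube around $K$ genuinely supports a packing with $\gtrsim r^{-d}$ disjoint balls lying inside $U$. Both follow from standard self-similarity estimates for Cantor-type constructions, but they force a careful choice of the Cantor parameters (so that the upper and lower box dimensions coincide with $d$). Everything else is routine: the H\"older estimate, the excision decomposition of the degree, and the algebraic comparison of exponents. Part (i) is essentially a soft argument relying on Theorem~\ref{thm:main1} plus standard equi-integrability.
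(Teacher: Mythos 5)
Part (i) of your proposal is correct and is essentially the paper's argument: a uniform $L^q$ bound from Theorem \ref{thm:main1} for some $q\in(p,n\alpha/d)$, pointwise convergence of the degrees away from the Lebesgue-null set $u(\partial U)$ via homotopy invariance, and Vitali's theorem. Nothing to add there.

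Part (ii) contains a fatal error. You take $\phi$ smooth, \emph{supported in} $B(0,1)$, with $\phi(y)=y$ on $B(0,1/2)$, and claim $\deg(\phi,B(0,1),z)=1$ for $z\in B(0,1/2)$. This is false: since $\phi$ vanishes on $\partial B(0,1)$, we have $\phi(\partial B(0,1))=\{0\}$, and the degree is constant on the connected set $\R^n\setminus\{0\}$ (for $n\geq 2$) and equal to $0$ near infinity, hence $\deg(\phi,B(0,1),z)=0$ for every $z\neq 0$. Concretely, writing $\phi(y)=\eta(|y|)y$, each $z$ with $0<|z|<1/2$ has a preimage in $\{|y|<1/2\}$ contributing $+1$ and a preimage in the collapsing annulus contributing $-1$. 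The same conclusion follows even more directly from the fact, recorded below \eqref{eq:57}, that $\deg(u,U,\cdot)$ depends only on $u|_{\partial U}$: your $u_k$ vanishes identically on $\partial U$ (the balls $B(x_i^k,r_k)$ sit in the interior), so $\deg(u_k,U,\cdot)\eq 0$ and the $L^p$ norms do not diverge — they vanish. A telltale sign is that your construction never uses the fractal boundary: if it worked, the same packing in a cube would produce divergence for $p$ below the Lipschitz-boundary threshold $n\alpha/(n-1)$, contradicting Theorem \ref{thm:main1} (and the Lipschitz case discussed in the introduction).

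The repair requires putting the ``winding'' on the boundary itself, which is what the paper does. One builds a self-similar $\partial U$ together with pre-fractals $\partial U^m$ carrying $\sim r^{-dm}$ disjoint $(n-1)$-dimensional boxes of sidelength $r^m$ (Lemma \ref{lem:Qmconst}); on each such box one places a Lipschitz map $\tilde\zeta$ that wraps the box once around $\Sb^{n-1}$ while sending its boundary to the single point $-e_n$ (Lemma \ref{lem:xirhodef}), extended by the constant $-e_n$ elsewhere on $\partial U^m$. Each such loop genuinely contributes $+1$ to $\degd(\cdot,\partial U^m,y)$ for $y$ in a small ball, and Lemma \ref{lem:sep} sums these contributions. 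The count $r^{-dm}$ of boxes \emph{on} $\partial U^m$ — not a packing of interior balls — is exactly where the box dimension $d$ of the boundary enters, and the maps are then pulled back to $\partial U$ via the projections of Lemma \ref{lem:Pconst} and extended to $\overline U$ by Whitney extension. Your H\"older-norm scaling $\mu_k=r_k^{\alpha+\gamma}$ and the exponent comparison at the end are fine and mirror the paper's, but they rest on a degree computation that gives $0$ instead of $N_k$.
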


We end this introduction by explaining the plan of the paper. In Section
\ref{sec:preliminaries}, we collect some known methods and theorems that we are
going to need in our proofs. They concern (real) interpolation theory,
self-similar fractals, the Brouwer degree, the Whitney decomposition of an open
subset of $\R^n$,  and the relation between the Whitney decomposition and the
box dimension. In Section \ref{sec:proof-first-thm}, we give the proof of our
 main result, Theorem \ref{thm:main1}.  Section \ref{sec:proof-thm2} is
devoted to the proof of Theorem \ref{thm:convcor}, using several Lemmas whose
proof is given in Section \ref{sec:proof-lemmas-sec4}. 

\subsection*{Notation}
The
symbol for the non-negative integers is $\N=\{0,1,\dots\}$.
The open ball in $\R^n$ with center $x\in\R^n$ and radius $r>0$ will be denoted by $B(x,r)$, while
the open ball in $\R^{n-1}$ with center $x\in\R^{n-1}$ and radius $r$  will be denoted by $B^{n-1}(x,r)$. The standard $n-1$ sphere is
$\Sb^{n-1}=\{x\in\R^n:|x|=1\}$. The canonical orthonormal basis of $\R^n$ is denoted by $(e_1,\dots,e_n)$.
The characteristic function of a set $A\subset \R^n$ is denoted by
$\chi_A$.\\ 
The $n$-dimensional Lebesgue measure is denoted by
$\mathcal L^n$, and the $k$-dimensional Hausdorff measure by $\H^k$. The volume of the unit ball in $m$ dimensions is denoted by
$\omega_m=\pi^{m/2}/\Gamma(m/2+1)$.\\ 
Whenever we want to say that two functions $f,g$ that are defined $\L^n$-almost
everywhere on $\R^n$, agree $\L^n$-almost everywhere, then we write $f\eq
g$. \\
For a Lipschitz
function defined on a set $A\subset \R^n$, its Lipschitz constant is $\Lip
f=\sup_{x,y\in A,x\neq y}|f(x)-f(y)|/|x-y|$. 
For sets $A\subset \R^n$ and functions $f:A\to\R$, we set
\[
[f]_{C^{0,\alpha}(A)}:=\sup_{\substack{x,y\in A\\x\neq
    y}}\frac{|f(x)-f(y)|}{|x-y|^{\alpha}}\,.
\]
If the domain is clear, we often will write $[f]_\alpha\equiv
[f]_{C^{0,\alpha}(A)}$ for short. The corresponding H\"older norm is defined by
\[
\|f\|_{C^{0,\alpha}(A)}=\sup_{x\in A}|f(x)|+[f]_{C^{0,\alpha}(A)}\,.
\]
Let $\Lambda^{p}\R^n$ denote the set of rank $p$ multi-vectors in $\R^n$, i.e.,
the linear space 
\[
\Lambda^p\R^n=\left\{ \sum_{i_1,\dots,i_p\in\{1,\dots,n\}}a_{i_1,\dots,i_p}\d x_{i_1}\wedge\dots \wedge \d x_{i_p} :\,
 a_{i_1,\dots,i_p}\in\R\right\}\,,
\]
With this notation, $p$-forms are functions on $U$ with values in $\Lambda^p\R^n$. We 
make $C^{k,\alpha}(U;\Lambda^p\R^n)$  a normed space by setting
\[
\|a\|_{C^{k,\alpha}(U;\Lambda^p\R^n)}=\sum_{i_1,\dots,i_p}\|a_{i_1,\dots,i_p}\|_{C^{k,\alpha}(U)}
\]
for $a=\sum_{i_1,\dots,i_p}a_{i_1,\dots,i_p}\d x_{i_1}\wedge\dots \wedge \d
x_{i_p} $.

The symbol $C$ will have the
following meaning: A statement such as 
$f\leq C(a,b,\dots) g $
  means that there
exists a numerical constant $C$ that only depends on $a,b,\dots$, such that
$f\leq C g $. The value of $C$ may change from one line to the next.

\subsection*{Acknowledgments}
The author would like to thank Camillo De Lellis and Dominik Inauen for pointing out an error in the proof of Theorem 1.1 that had led to the wrong exponent on the right hand side in the estimate for the $L^p$-norm of the degree. 
Also, he  would like to thank Stefan M\"uller for helpful discussions, in
particular for pointing him to the results on multi-linear interpolation in the
reference \cite{MR2328004}. 
\section{Preliminaries}
\label{sec:preliminaries}
\subsection{Tools from interpolation theory}
\label{sec:tools-from-interp}
We are going to use some standard constructions from real interpolation
theory, due to Lions and Peetre \cite{MR0133693,MR0165343} (see also the
textbook \cite{MR0482275}).  In the following, we give a very short definition
of interpolation spaces via the  trace method \cite{MR0159212}.\\
Let $(E_0,\|\cdot\|_0)$, $(E_1,\|\cdot\|_1)$ be normed spaces. 
We may equip $E_0\cap E_1$ and $E_0+E_1$ with the following norms:
\[
\begin{split}
  \|x\|_{E_0\cap E_1}=&\max\{\|x\|_0,\|x\|_1\}\\
  \|x\|_{E_0+ E_1}=&\inf\{\|x_0\|_0+\|x_1\|_1:\,x_0\in E_0,\,x_1\in
E_1,\,x_0+x_1=x\}
\end{split}
\]
\begin{definition}
\label{def:tracespace}
For $\theta \in (0,1) $ and  $1 \leq p \leq \infty$ we denote by $V(p, \theta,
E_1, E_0)$ the set of all functions $u \in W^{1,p}_{\mathrm{loc}}(\R_+,E_0+
E_1)$ with the following properties:  $u(t)\in E_1$ and $u'(t)\in E_0$ for all $t>0$, and with $u_{*,\theta}(t):=t^\theta u(t)$ and
$u'_{*,\theta}(t):=t^\theta u'(t)$, we have
\[
u_{*,\theta} \in  L^p(\R^+,  \d t/t;E_1) ,\quad u'_{*,\theta} \in  L^p(\R^+, \d t/t;E_0)\,.
\]
We define a norm on $V = V(p, \theta, E_1, E_0)$ by
\[
\|u\|_V := \|u_{*,\theta}\|_{ L^p(\R^+,  \d t /t;E_1)} + \| u'_{*,\theta}\|_{ L^p(\R^+, \d t/t;E_0)}\,.
\]
\end{definition}
It can be shown that those functions are continuous in $t=0$ and we define the real interpolation spaces as follows:
\begin{definition} The real interpolation space $(E_0, E_1)_ {\theta, p}$ 
is defined as set of traces of functions  belonging to $V(p, 1-\theta, E_1, E_0)$ at $t=0$ together with the norm:
\[
\|x\|^{\mathrm{Tr}}_{(\theta, p)} = \inf \{ \|u\|_V : u \in
V(p,1-\theta,E_1,E_0), \, \lim_{t\to 0}u(t) = x \}
\]
\end{definition}

It can be shown
that the  H\"older spaces $C^{0,\alpha}(U)$ are identical to the
real interpolation space
$(C^0(U),C^1(U))_{\alpha,\infty}$, up to
equivalence of norms. 
\subsection{Self-similar fractals}
\label{sec:self-simil-fract}
We recall the construction of self-similar fractals introduced in
\cite{MR625600} (see also \cite{MR867284}). A \emph{similarity} is a map $S:\R^n\to \R^n$
such that $|S(x)-S(y)|=c|x-y|$ for all $x,y\in\R^n$, for some $c>0$. The number $c$ is called the
\emph{ratio} of $S$. For $i=1,\dots,k$,
let $S_i$ be such a similarity, with ratios smaller than 1.
A compact set $K\subset \R^n$ is said to be invariant under $\mathcal
S=\{S_1,\dots,S_k\}$ if
\[
K=\cup_{i=1}^k S_i(K)\,.
\]
In fact, one can show that there exists a unique compact set, the attractor set
of $\mathcal S$, denoted by $K(\mathcal S)$,
that fulfills this property. It consists of the closure of the fixed points
of finite compositions of the similarities. A set constructed in this way is
called \emph{self-similar}.\\
For a given set of similarities $\mathcal S=\{S_1,\dots,S_k\}$,
we define a transformation $S$ on the class of non-empty compact sets by 
\begin{equation}
 S(E)=\cup_{i=1}^k S_i(E)\label{eq:32}
 \end{equation}
and write $S^l$ for the $l$-th iterate of $S$.
For $i_1,\dots,i_l\in\{1,\dots,k\}$ and $E\subset \R^n$, we will use the notation
\[
S_{i_1,\dots,i_l}(E)=S_{i_1}\circ \dots \circ S_{i_l}(E)\,.
\]
With this notation, we have
\[
S^l(E)=\bigcup_{i_1,\dots,i_l=1}^k S_{i_1,\dots,i_l}(E)\,.
\]
A convenient way of defining certain self-similar sets in $\R^2$ (i.e., 
self-similar curves) is by specifying a \emph{generator} for the curve. This is
a sequence of points $\gamma:\{1,\dots,k+1\}\to\R^2$ with $|\gamma(1)|<1$,
$|\gamma(i)-\gamma(i-1)|<1$ for $i=2,\dots,k+1$. 
  The set of
similarities associated to such a generator is given by $\{S_1,\dots,S_k\}$,
where $S_i$ is the orientation preserving similarity that maps $(0,0)$ to
$\gamma(i)$ and $(1,0)$ to $\gamma({i+1})$.
A typical example of a self-similar set constructed from a generator is the Koch
curve, see Figure \ref{fig:koch}.
\begin{figure}[h]
\begin{center}
\includegraphics[width=.7\textwidth]{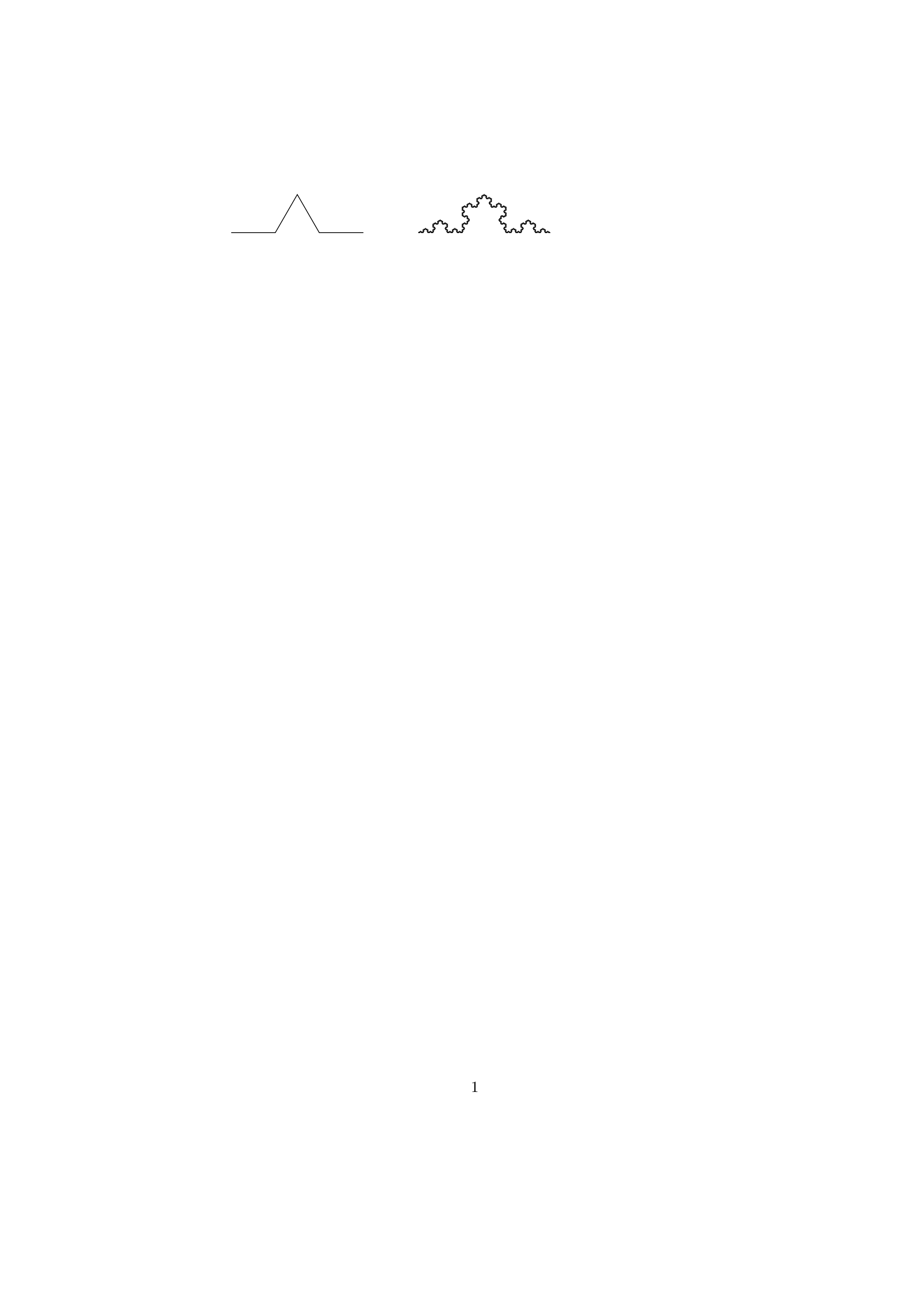}
\caption{The Koch curve (right) and its generator (left).\label{fig:koch}}
\end{center}
\end{figure}
A set of similarities $\mathcal S$ is said to
satisfy the open set condition if there exists a non-empty open set $V\subset \R^n$ such
that
\[
\begin{split}
  S_i(V)\subset V \quad &\text{ for }i=1,\dots,k\\ 
S_i(V)\cap S_j(V)=\emptyset
  \quad&\text{ for }i,j=1,\dots,k,\,i\neq j
\end{split}
\]
The following lemma has been proved in \cite{MR625600,MR3236784}:
\begin{lemma}[Theorem 9.3 in \cite{MR3236784}]
\label{lem:selfsim}
Let $\mathcal S=\{S_1,\dots,S_k\}$ be a set of similarities satisfying the open set condition, and let $r_i$
be the ratio of $S_i$ for $i=1,\dots,k$.
Further, let $d$ be the (unique) real number that satisfies
\[
\sum_{i=1}^k r_i^d=1\,.
\]
Then  the Hausdorff dimension and box dimension of $K(\mathcal S)$ agree and are
equal to $d$.
\end{lemma}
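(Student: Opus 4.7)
The plan is to establish the equality $\dim_H K(\mathcal S)=\bd K(\mathcal S)=d$ by sandwich: an upper bound on the box dimension (which automatically dominates the Hausdorff dimension) and a matching lower bound on the Hausdorff dimension. Since $\dim_H\leq\bd$ in general, this pair of inequalities forces all three quantities to coincide.

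For the upper bound I would exploit the self-similarity relation $K(\mathcal S)=\bigcup_{i_1,\dots,i_l} S_{i_1,\dots,i_l}(K(\mathcal S))$. Each piece $S_{i_1,\dots,i_l}(K(\mathcal S))$ has diameter $r_{i_1}\cdots r_{i_l}\cdot\diam K(\mathcal S)$, and by the defining relation $\sum_i r_i^d=1$ one has
\[
\sum_{i_1,\dots,i_l}(r_{i_1}\cdots r_{i_l})^d=\Bigl(\sum_{i=1}^k r_i^d\Bigr)^l=1
\]
for every $l$. Refining these covers by splitting any piece whose ratio exceeds a prescribed $\delta>0$ yields, for every $\delta>0$, a cover of $K(\mathcal S)$ by sets of diameter at most $\delta$ with $d$-th powers of diameters summing to a bounded constant. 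This gives $\mathcal H^d(K(\mathcal S))<\infty$ and, after converting the cover into one by balls of a single radius comparable to $\delta$ (using that there are $O(\delta^{-d})$ such pieces), also $\bd K(\mathcal S)\leq d$.

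For the lower bound I would invoke the mass distribution principle: it suffices to construct a Borel probability measure $\mu$ supported on $K(\mathcal S)$ such that $\mu(B(x,r))\leq C r^d$ uniformly. The natural candidate is the self-similar measure assigning mass $r_{i_1}^d\cdots r_{i_l}^d$ to the cylinder set $S_{i_1,\dots,i_l}(K(\mathcal S))$; since the masses sum to $1$ at each level, Kolmogorov's extension (or simply iteration of the self-similarity identity $\mu=\sum_i r_i^d (S_i)_\#\mu$) yields the desired probability measure.

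The main obstacle is estimating $\mu(B(x,r))$, and this is exactly where the open set condition enters. Given $r>0$, one truncates the symbolic tree at those addresses $(i_1,\dots,i_l)$ for which $r_{i_1}\cdots r_{i_{l-1}}>r\geq r_{i_1}\cdots r_{i_l}$; the open set $V$ from the open set condition then guarantees that the translates $S_{i_1,\dots,i_l}(V)$ over such addresses are pairwise disjoint, have diameters comparable to $r$, and contain balls of radius comparable to $r$. A volume-packing argument on a ball of radius $c\,r$ around $x$ then shows that only boundedly many such cylinders can meet $B(x,r)$, where the bound depends only on $n$, $V$, and $\min_i r_i$. Summing the mass $r_{i_1}^d\cdots r_{i_l}^d\lesssim r^d$ over this bounded collection yields $\mu(B(x,r))\leq Cr^d$, which by the mass distribution principle forces $\dim_H K(\mathcal S)\geq d$ and closes the argument.
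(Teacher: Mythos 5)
This lemma is not proved in the paper at all: it is imported verbatim as Theorem 9.3 of Falconer's book (and Hutchinson's original work), so there is no in-paper argument to compare against. Your sketch is precisely the standard proof of that cited theorem --- the stopping-time cover for $\mathcal H^d(K(\mathcal S))<\infty$ and $\overline{\dim}_B\leq d$, plus the self-similar measure with weights $r_i^d$ and the open-set-condition packing lemma to verify $\mu(B(x,r))\leq Cr^d$ for the mass distribution principle --- and it is correct in outline (the only details left implicit, e.g.\ that one may take $V$ bounded with $K(\mathcal S)\subset\overline V$ so that cylinders meeting $B(x,r)$ correspond to disjoint open sets near $B(x,r)$, are handled exactly as in Falconer's Lemma 9.2).
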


\subsection{Properties of the Brouwer degree}
We  recall the definition and some basic properties of the Brouwer
degree.
For a more thorough
exposition with proofs of the claims made here, see e.g.~\cite{MR1373430}.\\
Let  $U$ be a bounded subset of $\R^n$. Further, let $u\in C^\infty(\overline U;\R^n)$. Assume that $y\in \R^n\setminus u(\partial U)$, and let $\mu$ be a $C^\infty$ $n$-form on $\R^n$ with support in the same connected component of $\R^n\setminus u(\partial U)$ as $y$, such that $\int_{\R^n} \mu= 1$. Then the degree is defined by 
\begin{equation}
\deg(u,U,y)=\int_U u^*(\mu)\,,\label{eq:10}
\end{equation}
where $u^*$ is the pull-back by $u$. It can be shown that this definition is independent of the choice of $\mu$. Further, $\deg(u,U,\cdot)$ is constant on connected components of $\R^n\setminus u(\partial U)$ and integer valued. Moreover,  it is invariant under homotopies, i.e., given $H\in C^\infty([0,1]\times \overline U; \R^n)$ such that $y\not \in H([0,1],\partial U)$, we have
\[
\deg(H(0,\cdot),U,y)=\deg(H(1,\cdot),U,y)\,.
\]
Using these facts, one can go on to define the degree for $u\in C^0(\overline
U;\R^n)$ by approximation.\\
If $ u:\overline U\to\R^n$ is Lipschitz, and $\L^n(\partial U)=0$, then  it follows from \eqref{eq:10} and approximation by smooth functions that
\begin{equation}
\int_{\R^n}\deg(u,U,\cdot)\mu=\int_U  u^*(\mu)
\label{eq:51}
\end{equation}
for any $n$-form $\mu$ on $\R^n$ with coefficients in $L^\infty(\R)$. 
If $\mu$ is an exact form, i.e.,
\[
\mu= \d \omega
\]
for some $n-1$ form $\omega$ on $\R^n$, then
\[
 u^*(\d \omega)= \d \left( u^* \omega\right)\,.
\]
If $U$ has Lipschitz boundary,
this implies, by Stokes' Theorem,

\begin{equation}
\begin{split}
  \int_{\R^n}\deg(u,U, \cdot)\d \omega=&\int_U \d \left( u^* \omega\right)\\
    =&\int_{\partial U}  u^* \omega \,.
  \end{split}\label{eq:57}
  \end{equation}
Assume $\mu$ is a given $n$-form. Since we assume $U$ to be bounded, we may always find some $n-1$-form $\omega$ such that $\d\omega=\mu$ on $\supp \deg(u,U,\cdot)\subset u(U)$, and hence \eqref{eq:57} shows in particular that 
the degree only depends on $u|_{\partial U}$. We will
 write $\deg(u, U, y) =\degd(u,\partial U, y)$. \\
In the proof of Theorem \ref{thm:convcor} (ii), we will use the following lemma:
\begin{lemma}
\label{lem:sep}
Let $U\subset\R^n$ be a bounded Lipschitz domain, and let $u:\bar U\to \R^n$
be Lipschitz. Further, let $V\subset\partial U$ be relatively open in $\partial
U$, and assume there exists $y_0\in\R^n$ such that $u(x)=y_0$ for all
$x\in\tilde\partial V$ (where $\tilde \partial V$ denotes the relative boundary
of $V$ in $\partial U$). Define $u_i:\partial U\to \R^n$, $i=1,2$ by
\[
u_1(x)=\begin{cases}u(x)& \text{ if }x\in V\\ y_0 & \text{ if }x\in \partial U\setminus V\end{cases}\,,\qquad
u_2(x)=\begin{cases} y_0 &
  \text{ if }x\in V\\u(x)& \text{ if }  x\in \partial U\setminus V\end{cases}\,.
\]
Then
\[
\degd(u,\partial U,y)=\degd(u_2,\partial U\setminus V,y)+\degd(u_1,V,y)\quad
\text{for all } y\in \R^n\setminus u(\partial U)\,.
\]
\end{lemma}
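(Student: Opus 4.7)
The strategy is to reduce the identity to the boundary-integral formula \eqref{eq:57} and exploit the vanishing of the pullback of any differential form under a constant map.

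First I would check that $u_1$ and $u_2$ are Lipschitz on $\partial U$: this is immediate from $u \in \Lip$ together with the hypothesis $u \equiv y_0$ on $\tilde\partial V$, which ensures that the gluing at the interface between $V$ and $\partial U \setminus V$ produces no jump. Extend each $u_i$ to a Lipschitz map $\tilde u_i : \bar U \to \R^n$ (e.g.\ by Kirszbraun). Since $u_i(\partial U) \subset u(\partial U) \cup \{y_0\} = u(\partial U)$ (noting $y_0 = u(x)$ for any $x \in \tilde\partial V$), any $y \in \R^n \setminus u(\partial U)$ also lies outside $u_i(\partial U)$, so $\deg(\tilde u_i, U, y)$ is well defined.

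Next, fix such a $y$ and pick an $n$-form $\mu$ supported in the component of $y$ in $\R^n \setminus u(\partial U)$ with $\int \mu = 1$, and write $\mu = \mathrm{d}\omega$ for a compactly supported $(n-1)$-form $\omega$ on $\R^n$. Formula \eqref{eq:57} applied to $u$ and to $\tilde u_i$ yields
\begin{equation*}
\deg(u, U, y) = \int_{\partial U} u^* \omega, \qquad \deg(\tilde u_i, U, y) = \int_{\partial U} u_i^* \omega \quad (i=1,2).
\end{equation*}
Split $\partial U = V \cup (\partial U \setminus V)$; since $u_1 \equiv y_0$ on $\partial U \setminus V$ and $u_2 \equiv y_0$ on $V$, the pullbacks $u_1^*\omega$ and $u_2^*\omega$ vanish on the respective complements, while $u = u_1$ on $V$ and $u = u_2$ on $\partial U \setminus V$. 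Hence
\begin{equation*}
\int_{\partial U} u^* \omega \;=\; \int_V u_1^*\omega + \int_{\partial U \setminus V} u_2^*\omega \;=\; \int_{\partial U} u_1^*\omega + \int_{\partial U} u_2^*\omega,
\end{equation*}
so $\deg(u, U, y) = \deg(\tilde u_1, U, y) + \deg(\tilde u_2, U, y)$.

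Finally I would match the right-hand side with the symbols $\degd(u_1, V, y)$ and $\degd(u_2, \partial U \setminus V, y)$. This is essentially definitional: $\degd(u_1, V, y)$ is the common value of $\int_V u_1^*\omega$ over admissible $\omega$ (equivalently $\int_{\partial U} u_1^*\omega$, since the complementary piece contributes zero), and analogously for $u_2$; this is consistent with $\degd(u_i, \partial U, y) = \deg(\tilde u_i, U, y)$ via \eqref{eq:57}, and the choice of Lipschitz extension $\tilde u_i$ is irrelevant as the boundary integral only sees $u_i$. The main obstacle I anticipate is therefore not analytical but notational: namely, to unambiguously justify the boundary-restricted $\degd$ on a relatively open piece of $\partial U$. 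Once that is granted, the combinatorics of pullbacks does all the work, and the additivity follows with no further estimates.
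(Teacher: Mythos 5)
Your proposal is correct and follows essentially the same route as the paper: write the degree as the boundary integral $\int_{\partial U}u^*\omega$ via \eqref{eq:57}, split $\partial U$ into $V$ and $\partial U\setminus V$, and use that the pullback of $\omega$ under the constant map $y_0$ vanishes, so $\int_{\partial U}u^*\omega=\int_{\partial U}u_1^*\omega+\int_{\partial U}u_2^*\omega$. One small correction: $\omega$ cannot be chosen compactly supported with $\d\omega=\mu$ globally (Stokes would force $\int\mu=0$); as in the paper, one only requires $\d\omega=\mu$ on the bounded set $u(\overline U)$.
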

\begin{proof}
We will show 
\[
\int_{\R^n}\degd(u,U,\cdot)\mu =\int_{\R^n}\degd(u_1,U,\cdot)\mu+\int_{\R^n}\degd(u_2,U,\cdot)\mu\,
\]
for every $n$-form $\mu$ on $\R^n$ with coefficients in $L^\infty$. Indeed, as  we remarked below \eqref{eq:57}, there exists an $n-1$-form $\omega$ such that $\mu=\d\omega$ on $u(\overline U)$, and hence 
\[
\begin{split}
  \int_{\R^n}\degd(u,U,\cdot)\mu=&\int_{\partial U}u^*\omega\\
  =&\int_{\partial U}u_1^*\omega+\int_{\partial U}u_2^*\omega\\
  =&\int_{\R^n}\degd(u_1,U,\cdot)\mu+\int_{\R^n}\degd(u_2,U,\cdot)\mu\,,
\end{split}
\]
proving the claim of the lemma.
\end{proof}
\subsection{Whitney decomposition and  box dimension}
\label{sec:whitn-decomp-box}
One of our main tools in the proof of Theorem \ref{thm:main1} will be the
Whitney decomposition of an open set $U$.
\begin{lemma}[see e.g.~\cite{MR0290095}, Chapter 1, Theorem 3]
\label{lem:whitneydecomp}
Let $U\subset \R^n$ be open. Then there exists 
 a countable collection $W=\{Q_i:i\in\N\}$ of cubes $Q_i$ with the
following properties:
\begin{itemize}
\item For every $Q\in W$, there exist $k,m_1,\dots,m_n\in\Z$ such that $Q=(m_1
  2^{-k},(m_1+1)2^{-k})\times \dots\times(m_n 2^{-k},(m_n+1)2^{-k})$. For fixed $k$, the union of
  cubes for which this holds for some $m_1,\dots,m_n$ is denoted by $W_k$.
\item $U\subset\cup_{Q\in W} \overline Q$ 
\item The cubes in $W$ are mutually disjoint
\item $\dist(Q,\partial U)\leq \diam Q\leq 4\dist(Q,\partial U)$ for all $Q\in
  W$
\end{itemize}
\end{lemma}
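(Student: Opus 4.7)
This is the classical Whitney decomposition, and I would follow the standard construction of Stein (\emph{op.~cit.}). The plan proceeds in two stages: first, at each dyadic scale, select those cubes whose distance to $\partial U$ is comparable to their sidelength; second, since two dyadic cubes in $\R^n$ are either disjoint or one is contained in the other, pass to the subfamily of maximal cubes to obtain disjointness.

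Concretely, assume $\partial U \neq \emptyset$ (otherwise $U = \R^n$ and the statement is vacuous). For each $k \in \Z$ let $\mathcal D_k$ denote the half-open dyadic cubes of side $2^{-k}$ of the form described in the first bullet, and for constants $0 < c_1 < c_2$ (to be chosen so as to realize the distance bound in the fourth bullet) introduce the shells
\[
S_k \;=\; \{\, x \in U : c_1 \cdot 2^{-k} \leq \dist(x, \partial U) < c_2 \cdot 2^{-k} \,\}.
\]
Taking $c_2 = 2 c_1$, the family $\{S_k\}_{k \in \Z}$ partitions $U$. Define the preliminary collection
\[
\mathcal W^0 \;=\; \bigcup_{k \in \Z}\, \{\, Q \in \mathcal D_k : Q \cap S_k \neq \emptyset \,\}.
\]
For any $Q \in \mathcal D_k \cap \mathcal W^0$ and any $x \in Q \cap S_k$, using $\diam Q = \sqrt n \cdot 2^{-k}$, the triangle inequality yields
\[
(c_1 - \sqrt n) \cdot 2^{-k} \;\leq\; \dist(Q, \partial U) \;\leq\; \dist(x, \partial U) \;<\; c_2 \cdot 2^{-k}.
\]
With $c_1 = 2\sqrt n$, $c_2 = 4\sqrt n$ this specialises to $\diam Q \leq \dist(Q, \partial U) < 4\,\diam Q$, which is the comparability asserted in the fourth bullet (after translating into the form in the statement).

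Next, since any two dyadic cubes are either disjoint or nested, I prune $\mathcal W^0$ by keeping only its maximal elements,
\[
W \;:=\; \{\, Q \in \mathcal W^0 : \text{no } Q' \in \mathcal W^0 \text{ properly contains } Q \,\}.
\]
For any $Q_0 \in \mathcal W^0$ and any $x_0 \in Q_0$, the diameter of any ancestor $Q' \supset Q_0$ in $\mathcal W^0$ is bounded in terms of $\dist(x_0, \partial U)$ by the distance inequality, so only finitely many such $Q'$ exist and the maximum is attained. The cubes of $W$ are pairwise disjoint by construction; cover $U$ in closure, since $\bigcup_{Q \in W} \overline Q = \bigcup_{Q \in \mathcal W^0} \overline Q \supset U$ (every $x \in U$ lies in some $S_k$ and hence in the closure of some $Q \in \mathcal D_k \cap \mathcal W^0$); inherit the distance bound as $W \subset \mathcal W^0$; and form a countable collection as a subfamily of the countable set of all dyadic cubes.

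The only point of care is tuning the numerical constants $c_1, c_2$ so that the distance inequality is hit with the precise window of the statement; aside from that, the argument is formal bookkeeping on nested dyadic grids, and there is no substantive obstacle.
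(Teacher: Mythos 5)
Your outline is the standard Whitney construction from the cited reference, which is exactly what the paper itself relies on (the lemma is stated with a citation and no proof), and the skeleton is sound: dyadic shells $S_k$, a preliminary selection $\mathcal W^0$, passage to maximal dyadic cubes, disjointness from maximality, and inheritance of the distance bound. One bookkeeping remark: to justify $U\subset\bigcup_{Q\in W}\overline Q$ cleanly you should select via the half-open tiling of each generation (every $x\in S_k$ lies in exactly one half-open generation-$k$ cube, which therefore meets $S_k$ and whose closure contains $x$); with open cubes alone, a point sitting on a generation-$k$ grid face need not lie in the closure of any generation-$k$ cube that actually meets $S_k$.

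The step that does not go through as written is the closing claim that the constants $c_1,c_2$ can be tuned so that "the distance inequality is hit with the precise window of the statement." Your scheme yields $\diam Q\le\dist(Q,\partial U)<4\,\diam Q$, whereas the lemma asserts $\dist(Q,\partial U)\le\diam Q\le 4\dist(Q,\partial U)$, and no choice of $c_1<c_2$ produces the latter: guaranteeing $\dist(Q,\partial U)\le\diam Q=\sqrt n\,2^{-k}$ from $\dist(Q,\partial U)\le\dist(x,\partial U)<c_2 2^{-k}$ forces $c_2\le\sqrt n$, while guaranteeing $\dist(Q,\partial U)\ge\tfrac14\diam Q$ from $\dist(Q,\partial U)\ge(c_1-\sqrt n)2^{-k}$ forces $c_1\ge\tfrac54\sqrt n$, which is incompatible with $c_1<c_2$. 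You should be aware, however, that the inequality as printed in the lemma is the cited theorem of Stein with the two sides interchanged (the source gives $\diam Q\le\dist(Q,F)\le 4\,\diam Q$), and every use of the lemma in the paper (the summation over $W_k$ in Lemma \ref{lem:intwelldef} via Theorem \ref{thm:Nkdb}, and the extension in Lemma \ref{thm:whitney}) only requires two-sided comparability of $\diam Q$ and $\dist(Q,\partial U)$ with universal constants. So the version you actually prove is the correct and usable one; the fix is to state that form rather than to assert, incorrectly, that the printed window can be recovered by retuning.
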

Next, we recall the definition of box dimension, and some of its elementary properties.
\begin{definition}
Let $U\subset \R^n$ be bounded. Let $N_r(U)$ be the number of $n$-dimensional boxes of side length $r$ that
is required to cover $U$. The box dimension  $\db U$  is defined as
\[
\db(U)=\lim_{r\to 0}\frac{\log N_r(U)}{-\log r}\,,
\]
if this limit exists.
\end{definition}
We also define the $\beta$-dimensional Hausdorff-type content for sets $A\subset\R^n$,
\[
H^\beta(A)=\lim_{r\to 0}\left(\inf\{k r^\beta:\cup_{i=1}^{k}B(x_i,r)\supset A\}\right)\,.
\]
If $\db A$ exists, then
\[
\db A= \inf\{\beta :H^{\beta}(A)<\infty\}\,,
\]
see e.g~\cite{MR3236784}, Definition 3.1.\\
\\
In the following lemma, for sets $A\subset \R^n$, we will use the notation
\[
(A)_\e=\{x\in \R^n:\dist (x,A)\leq \e\}\,.
\]
\begin{lemma}
\label{lem:bdrydim}
Let $V\subset\R^n$ be open and bounded, $U\subset\subset V$, $n-1<\db \partial U=d<n$, $0<\alpha<1$ such that $n\alpha>d$,  and $u\in C^{0,\alpha}(V;\R^n)$. Then
\[
\L^n\left(u(\partial U)_\e\right)\to 0\quad\text{ as }\e\to 0\,.
\]
\end{lemma}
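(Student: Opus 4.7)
The plan is a direct scaling argument using the defining property of the box dimension combined with the H\"older estimate for $u$. Since $\db \partial U = d < n\alpha$, I pick an intermediate exponent $\beta \in (d, n\alpha)$; the whole proof will hinge on playing off the covering exponent $\beta$ against the H\"older exponent $\alpha$.

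First I would use the $H^\beta$-characterization of box dimension recalled just before the lemma: since $\beta > d = \db \partial U$ one has $H^{\beta}(\partial U) < \infty$, so there is a constant $M > 0$ such that for every sufficiently small $r > 0$, $\partial U$ can be covered by at most $N_r \leq M r^{-\beta}$ balls $B(x_i, r)$ with centers $x_i \in \partial U$. Because $U \subset\subset V$, for $r$ small enough all these balls lie in $V$, so the H\"older estimate $|u(x) - u(x_i)| \leq [u]_\alpha |x-x_i|^\alpha$ is available and gives
\[
u\bigl(\partial U \cap B(x_i,r)\bigr) \subset B\bigl(u(x_i), [u]_\alpha r^\alpha\bigr).
\]
Consequently, for any $\e > 0$,
\[
u(\partial U)_\e \subset \bigcup_{i=1}^{N_r} B\bigl(u(x_i),\ [u]_\alpha r^\alpha + \e\bigr),
\]
which yields the volume bound
\[
\L^n\bigl(u(\partial U)_\e\bigr) \leq \omega_n\, M\, r^{-\beta}\bigl([u]_\alpha r^\alpha + \e\bigr)^n.
\]

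Then I would couple $r$ to $\e$ by choosing $r = \e^{1/\alpha}$, so that $r^\alpha = \e$, obtaining
\[
\L^n\bigl(u(\partial U)_\e\bigr) \leq \omega_n\, M\, \bigl([u]_\alpha + 1\bigr)^n\, \e^{\,n - \beta/\alpha}.
\]
The exponent $n - \beta/\alpha$ is strictly positive by our choice $\beta < n\alpha$, so the right-hand side tends to $0$ as $\e \to 0$, which is the desired conclusion.

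I do not expect any serious obstacle: the only points requiring a line of care are (a) choosing $\beta$ strictly between $d$ and $n\alpha$, which is possible precisely because of the standing hypothesis $n\alpha > d$, and (b) verifying that for $r$ small enough all covering balls $B(x_i,r)$ sit inside $V$ so that the H\"older estimate of $u$ applies on them, which follows from $\partial U \subset \overline U \subset V$ with $U \subset\subset V$.
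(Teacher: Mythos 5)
Your proof is correct and follows essentially the same route as the paper: cover $\partial U$ by roughly $r^{-\beta}$ balls of radius $r$ for some $\beta\in(d,n\alpha)$ (the paper obtains this count via a Vitali cover plus the finiteness of $H^{\bar d}$, you invoke the content bound directly), push the cover forward with the H\"older estimate, and set $r=\e^{1/\alpha}$ to get a volume bound of order $\e^{\,n-\beta/\alpha}\to 0$. The exponent bookkeeping matches the paper's $\tilde\e^{\,\alpha n-\bar d}$ exactly, so there is nothing to add.
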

\begin{proof}
Set $\tilde \e=\e^{\alpha^{-1}}$.  Choose a finite number of $x_i\in\partial U$, $i=1,\dots,k$, such that
\[
\begin{split}
  \partial U\subset& \bigcup_{i=1}^k B(x_i,\tilde \e)\\
  B(x_i,\tilde \e/5)\cap B(x_j,\tilde \e/5)=&\emptyset \quad\text{ for
  }i,j\in\{1,\dots,k\},i\neq j\,.
\end{split}
\]
Such a collection $\{x_i\}$ exists by the Vitali Covering Lemma.  Choose $d<\bar d<n\alpha$. This choice implies $H^{\bar d}(\partial U)=0$. By choosing $\e$ small enough, we may assume
\[
 k\tilde \e^{\bar d} \leq 1\,, 
\]
Next observe that 
\[
\begin{split}
  u(\partial U)\subset& \bigcup_{i=1}^k B\left(u(x_i),\|u\|_{C^{0,\alpha}}\tilde \e^\alpha\right)\\
  = &\bigcup_{i=1}^k B\left(u(x_i),\|u\|_{C^{0,\alpha}}\e\right)\,.
\end{split}
\]
We set $C^*=\|u\|_{C^{0,\alpha}}+1$ and get
\[
\left(u(\partial U)\right)_\e\subset \bigcup_{i=1}^k B\left(u(x_i),C^*\e\right)\,.
\]
Hence,
\[
\begin{split}
  \L^n\left(\left(u(\partial U)\right)_\e\right)\leq &k \L^n(B(0,1))(C^*\e)^n\\
  \leq & C(u,n) (\tilde \e^{\alpha n-\bar d})k \tilde\e^{\bar d}\\
  \leq & C(u,n) (\tilde \e^{\alpha n-\bar d}) \\
  \to & 0 \text{ as }\e \to 0\,.
\end{split}
\]
This proves the lemma.
\end{proof}

In the proof of Theorem \ref{thm:main1}, we are going to exploit the following relation between the Whitney decomposition
and box dimension:
\begin{theorem}[\cite{MR880256}, Theorem 3.12]
\label{thm:Nkdb}
Let $K\subset \R^n$ be compact, with $\db K=d<n$. Let $W$ be the Whitney
decomposition of $\R^n\setminus K$.
Then $\lim_{k\to\infty}\frac{\log_2 \# W_k}{k}=d$.
 \end{theorem}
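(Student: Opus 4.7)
My plan has two parts: the upper bound $\limsup_k \log_2\#W_k/k \leq d$ via a volume argument, and the matching lower bound via a pigeon-hole on the tail of the Whitney series.

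For the upper bound, I would observe that each $Q\in W_k$ is a cube of side $2^{-k}$ with $\dist(Q,K)\leq \diam Q = \sqrt n\,2^{-k}$ by Lemma~\ref{lem:whitneydecomp}; hence every $y\in Q$ satisfies $\dist(y,K)\leq 2\sqrt n\,2^{-k}$, so $\bigcup W_k \subset (K)_{2\sqrt n\,2^{-k}}$. Since these cubes are disjoint,
\[
  \#W_k\cdot 2^{-nk} \;\leq\; \L^n\!\big((K)_{2\sqrt n\,2^{-k}}\big).
\]
The hypothesis $\bd K = d$ gives $N_r(K)\leq r^{-d-\e}$ for every $\e>0$ and all sufficiently small $r$; covering $K$ by $N_r(K)$ balls of radius $r$ and enlarging to radius $2r$ yields $\L^n((K)_r)\leq C r^{n-d-\e}$. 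This gives $\#W_k \leq C\cdot 2^{k(d+\e)}$, and letting $\e\to 0$ finishes this direction.

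For the lower bound, I would use a Vitali-packing argument on a maximal $r$-separated subset of $K$ to get the complementary bound $\L^n((K)_r)\geq c\,r^{n-d+\e}$. Next, if $y \in Q\in W_j$ satisfies $\dist(y,K)\leq 2^{-k}$, then Lemma~\ref{lem:whitneydecomp} forces $\sqrt n\,2^{-j}/4\leq \dist(Q,K)\leq \dist(y,K)\leq 2^{-k}$, so $j\geq k-C_0$ for some $C_0=C_0(n)$. Combining,
\[
  \sum_{j\geq k-C_0} \#W_j\,2^{-jn} \;\geq\; \L^n\!\big((K)_{2^{-k}}\big) \;\geq\; c\,2^{-k(n-d+\e)}.
\]
Using the upper bound $\#W_j\leq C\cdot 2^{j(d+\e)}$ from the first step, the tail $\sum_{j>k+L}$ is at most $C\,2^{-(k+L)(n-d-\e)}$, which drops below half the right-hand side as soon as $L\geq C\e k/(n-d)$. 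A pigeon-hole on the $O(\e k)$ levels $j\in [k-C_0,k+L]$ then locates some $j^*$ with $\#W_{j^*}\geq c\,k^{-1}\cdot 2^{j^*(d-\e')}$, and as $k$ varies $j^*(k)$ sweeps through all sufficiently large integers. Letting $\e'\to 0$ gives $\liminf_k \log_2\#W_k/k \geq d$.

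The main obstacle is that the Minkowski-type inequalities bound $\L^n((K)_r)$, which only controls the \emph{sum} $\sum_{j\approx k}\#W_j$ of Whitney populations at a band of levels rather than $\#W_k$ on its own. The key technical device is therefore to combine the volume lower bound with the upper bound to confine the mass to a window of width $O(\e k)$, which is short enough that $(k-C_0)/j^*\to 1$ and the shift in the exponent is absorbed into the $\e$ already in play; sharpening this pigeon-hole to a genuine liminf statement (rather than only a limsup along a subsequence $\{j^*(k)\}$) is the delicate point that leans on both upper and lower Minkowski estimates simultaneously.
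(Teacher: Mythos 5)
First, note that the paper does not prove this statement at all: it is quoted verbatim from \cite{MR880256} (Theorem 3.12), so there is no internal proof to compare against. Judged on its own terms, your argument is correct and complete for the upper bound $\limsup_k \log_2\#W_k/k\leq d$ (disjointness of the Whitney cubes plus the Minkowski--content bound $\L^n((K)_r)\leq Cr^{n-d-\e}$), and this is in fact the only half of the theorem that the paper ever uses (in Lemma \ref{lem:intwelldef}).

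The lower bound, however, has a genuine gap at its final step. Your chain of estimates legitimately produces, for each large $k$, \emph{some} level $j^*(k)\in[k-C_0,\,k+O(\e k)]$ with $\#W_{j^*}\gtrsim (\e k)^{-1}2^{j^*(d-\e)}$; this proves $\limsup_j \log_2\#W_j/j\geq d$, but not the claimed $\liminf$. The assertion that ``as $k$ varies, $j^*(k)$ sweeps through all sufficiently large integers'' is exactly what the pigeon-hole does not give you: the argument offers no control over \emph{which} level inside the window carries the mass, and nothing prevents the selected levels $\{j^*(k)\}_k$ from forming a sparse subsequence (e.g.\ always the top of the window), leaving infinitely many levels $j$ for which you have no lower bound on $\#W_j$ whatsoever. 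Shrinking the window does not help either: all of your inequalities bound $\L^n((K)_r)$ by sums $\sum_j \#W_j 2^{-jn}$ over a \emph{band} of levels, and Minkowski-content information is intrinsically blind to how that sum distributes among the individual levels. To get a level-by-level lower bound one needs a different, more geometric input --- for instance, an argument assigning to each dyadic cube of side $2^{-j}$ that meets $K$ a genuine Whitney cube of level within $O(1)$ of $j$, with bounded multiplicity, so that $\sum_{|i-j|\leq c}\#W_i\gtrsim N_{2^{-j}}(K)$ --- which is essentially the route taken in \cite{MR880256} and is absent from your proposal. As written, your proof establishes the theorem only with $\lim$ replaced by $\limsup$.
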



\section{Proof of Theorem \ref{thm:main1}}
\label{sec:proof-first-thm}
Recall that $C^1(U;\Lambda^{n-1}\R^n)$ denotes the space of continuously differentiable $n-1$
forms on $U$. For the subspace of closed forms, we introduce the notation
\[
C^1_{\mathrm{cl}}(U;\Lambda^{n-1}\R^n):=\{\omega\in
  C^1(U;\Lambda^{n-1}\R^n):\d\omega=0\}\,.
\]
Now we define two
norms $\|\cdot \|_{X_0^{n-1}}$, $\|\cdot \|_{X^{n-1}_1}$ on the
quotient space $C^1(U;\Lambda^{n-1}\R^n)/C^1_{\mathrm{cl}}(U;\Lambda^{n-1}\R^n)$:
\[
\begin{split}
  \|\omega\|_{X_0^{n-1}}:=&\inf\{\|\omega+\alpha\|_{C^0(U;\Lambda^{n-1}\R^n)}:\alpha\in
  C^1(U;\Lambda^{n-1}\R^n),\d\alpha=0\}\\
  \|\omega\|_{X_1^{n-1}}:=&\|\d\omega\|_{C^0(U;\Lambda^n\R^n)}
\end{split}
\]
Let $X_0^{n-1}$, $X_1^{n-1}$ denote the Banach spaces that one obtains by completion
with respect to the above norms respectively.\\
Next we define a multi-linear operator
\[
\begin{split}
  M:C^1(U;\R^n)\to &C^0(U;\Lambda^{n-1}\R^n)\\
  (u_1,\dots,u_n)\mapsto & \frac1n\sum_{i=1}^n(-1)^{i+1}
u_i\,\d u_1\wedge\dots\wedge\widehat{\d u_i}\wedge\dots\wedge\d u_n\,,
\end{split}
\]
where $\widehat{\d u_i}$ denotes omission of the factor $\d u_i$.
Note that
  \[
  \d M(u_1,\dots,u_n)=\det Du\, \d x_1\wedge\dots \d x_n\equiv \det Du\, \d x\,.
  \]
In the following lemma, let $X_{\theta}$ denote the real interpolation space
\[
X_{\theta}=(X_0^{n-1},X_1^{n-1})_{\theta,\infty}\,.
\]
\begin{lemma}Let $U\subset \R^n$ be bounded and open. For $i=1,\dots,n$, let
 $\alpha_i\in (0,1)$ such that
\[
\theta:=\left(\sum_{i=1}^n\alpha_i\right)-(n-1)>0\,.
\]
Additionally, let $u=(u_1,\dots,u_n)\in C^1(U;\R^n)$.
Then

\begin{equation}
\label{eq:13}
\|M(u_1,\dots, u_n)\|_{X_{\theta}} \leq C(n,\alpha_1,\dots,\alpha_n)
  \prod_{i=1}^n\|u_i\|_{C^{0,\alpha_i}(U)}\,,
\end{equation}
Moreover, for $\tilde \theta<\theta$, $M$ extends to a multi-linear operator
$C^{0,\alpha_1}(U)\times\dots\times C^{0,\alpha_n}(U)\to X_{\tilde \theta}$.
\label{lem:detinter}
\end{lemma}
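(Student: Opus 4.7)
The plan is to apply multi-linear real interpolation to the operator $M$, using two kinds of endpoint bounds: a clean bound into $X_1^{n-1}$ when all arguments are in $C^1$, and $n$ ``corner'' bounds into $X_0^{n-1}$ obtained by placing one argument in $C^0$ and keeping the rest in $C^1$. The key structural input is that the $n$ summands making up $nM(u_1,\dots,u_n)$ differ from each other by closed forms, so each one represents $M(u_1,\dots,u_n)$ in the quotient by $C^1_{\mathrm{cl}}$.

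First I would establish the endpoints assuming $u_1,\dots,u_n\in C^1(U;\R)$. Since $\d M(u_1,\dots,u_n)=\det Du\,\d x$, one has immediately $\|M(u_1,\dots,u_n)\|_{X_1^{n-1}}\leq \|\det Du\|_{C^0}\leq \prod_{i=1}^n\|u_i\|_{C^1(U)}$. For the $j$-th corner estimate, set
\[
T_j(u_1,\dots,u_n):=(-1)^{j+1}u_j\,\d u_1\wedge\dots\wedge\widehat{\d u_j}\wedge\dots\wedge\d u_n.
\]
A short computation shows $\d T_j=\det Du\,\d x$ for every $j$, hence $T_j-T_k$ is closed for all $j,k$; consequently $M(u_1,\dots,u_n)-T_j=\tfrac1n\sum_k(T_k-T_j)$ is closed, so $M(u_1,\dots,u_n)\equiv T_j$ in the quotient defining $X_0^{n-1}$. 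This yields
\[
\|M(u_1,\dots,u_n)\|_{X_0^{n-1}}\leq \|T_j\|_{C^0}\leq \|u_j\|_{C^0(U)}\prod_{i\neq j}\|u_i\|_{C^1(U)}
\]
for each $j=1,\dots,n$.

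Next I would apply the multi-linear real interpolation theorem (the version used in the BBM/Brezis--Nguy\^en circle of ideas, as stated in \cite{MR2328004}) with endpoint couples $(E_0^{(i)},E_1^{(i)})=(C^0(U),C^1(U))$ in each slot, and target couple $(X_0^{n-1},X_1^{n-1})$. The $n$ corner estimates and the ``all-$E_1$'' estimate above are exactly the hypotheses required, and the conclusion is
\[
\|M(u_1,\dots,u_n)\|_{(X_0^{n-1},X_1^{n-1})_{\theta,\infty}}\leq C\prod_{i=1}^n \|u_i\|_{(C^0(U),C^1(U))_{\alpha_i,\infty}},
\]
with the exponent $\theta=\bigl(\sum_i\alpha_i\bigr)-(n-1)$; the shift by $n-1$ is exactly the gain one picks up by having the ``$0$-endpoint'' available in each slot separately. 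Using the identification $(C^0(U),C^1(U))_{\alpha_i,\infty}\simeq C^{0,\alpha_i}(U)$ recalled in Section \ref{sec:tools-from-interp}, this gives the claimed inequality \eqref{eq:13} for smooth $u$.

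Finally, for the extension to $C^{0,\alpha_1}\times\dots\times C^{0,\alpha_n}$, I would argue by density: for $u_i\in C^{0,\alpha_i}(U)$, mollify to obtain $u_i^{(k)}\in C^1(U)$ with $u_i^{(k)}\to u_i$ in $C^{0,\tilde\alpha_i}(U)$ for any $\tilde\alpha_i<\alpha_i$, while $\|u_i^{(k)}\|_{C^{0,\alpha_i}}$ stays bounded. Choosing $\tilde\alpha_i<\alpha_i$ with $\tilde\theta:=\sum_i\tilde\alpha_i-(n-1)$, multi-linearity together with \eqref{eq:13} applied to $u^{(k)}-u^{(\ell)}$ shows $\{M(u_1^{(k)},\dots,u_n^{(k)})\}$ is Cauchy in $X_{\tilde\theta}$, and its limit defines the extension. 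The main obstacle is locating and invoking the appropriate multi-linear real interpolation theorem with corner-type endpoints that delivers the precise exponent $\theta=\sum\alpha_i-(n-1)$; all the remaining steps are essentially bookkeeping once the right abstract result is in hand.
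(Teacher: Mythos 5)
Your proposal is correct in substance and rests on exactly the same structural observation as the paper's proof: the summands $T_j$ of $nM$ all have the same exterior derivative $\det Du\,\d x$, so they coincide modulo closed forms, which yields the ``all-$C^1$'' bound in $X_1^{n-1}$ and the $n$ corner bounds in $X_0^{n-1}$ with the undifferentiated slot moved to any desired position. Where you differ is in how the interpolation is then executed. You invoke an abstract corner-type multi-linear real interpolation theorem (the exponent shift $\theta=\sum_i\alpha_i-(n-1)$ is indeed what such results deliver, and the reference \cite{MR2328004} acknowledged in the introduction contains statements of this kind), whereas the paper avoids citing any such theorem: it works directly with the trace-method description of $(C^0,C^1)_{\alpha_i,\infty}$ and $(X_0^{n-1},X_1^{n-1})_{\theta,\infty}$, choosing paths $v_i(t)$ tracing to $u_i$, setting $w(t)=M(v_1(t),\dots,v_n(t))$, and verifying the two weighted estimates on $w$ and $w'$ --- the closed-form identification being used precisely to turn the terms of $w'(t)$ in which the $t$-derivative falls on a $\d v_j$ factor into terms where $v_j'(t)$ sits as the scalar coefficient. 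In effect the paper proves the special case of the corner interpolation theorem it needs in situ; your route buys brevity at the cost of having to locate and correctly instantiate the abstract result, while the paper's is self-contained. Your density argument for the extension matches the paper's (approximation in $C^{0,\beta_i}$ for $\beta_i<\alpha_i$ with $\sum_i\beta_i=n-1+\tilde\theta$); just make sure you keep $\tilde\theta>0$ when shrinking the exponents, and note that mollification on the bounded set $U$ should be preceded by a Whitney-type extension to all of $\R^n$ (Lemma \ref{thm:whitney}) so that the approximants converge up to the boundary.
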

{\bf Notation:} All constants $C$ in the proof below may depend on
$n,\alpha_1,\dots,\alpha_n$ without explicit statement.
\begin{proof}
We use the representation of real interpolation spaces as trace spaces, see
Definition \ref{def:tracespace}. In
particular, we have
$C^{0,\alpha_i}(U)=(C^0(U),C^1(U))_{\alpha_i,\infty}$, and hence we may choose
$v_i\in W^{1,\infty}_{\mathrm{loc}}(\R^+;C^0(U))$ with  $v_i(t)\in
C^1(U)$, $v_i'(t)\in C^0(U)$ for all $t>0$ such that
\[
\begin{split}
  \|t^{1-\alpha_i} v_i(t)\|_{L^\infty(\R^+;C^1(U))}\leq &C \|u_i\|_{C^{0,\alpha_i}(U)}\,,\\
  \|t^{1-\alpha_i} v_i'(t)\|_{L^\infty(\R^+;C^0(U))}\leq &C \|u_i\|_{C^{0,\alpha_i}(U)}\,,
\end{split}
\] and $u_i=\lim_{t\to 0}v_i(t)$. Then we set
\[
w(t)=M(v_1(t),\dots,v_n(t))\,.
\]
By the multi-linearity of $M$, we have 
\[
\begin{split}
  \|t^{\sum_i(1-\alpha_i)}w(t)\|_{X_1^{n-1}}\leq&
  C\prod_{i=1}^n\left(t^{1-\alpha_i}\|v_i(t)\|_{C^1(U)}\right)\\
  \leq &C \prod_{i=1}^n\|u_i\|_{C^{0,\alpha_i}(U)}
\end{split}
\]
and

\begin{equation}
\begin{split}
  \|t^{\sum_i(1-\alpha_i)} w'(t)\|_{X_0^{n-1}}=& \frac1nt^{\sum_i(1-\alpha_i)}\Bigg\|
  \sum_{i=1}^n (-1)^{i+1} v_i'(t) \d v_1(t)\wedge\dots\wedge\widehat{\d
    v_i(t)}\wedge\dots\wedge\d v_n(t)\\
&+\sum_{j\neq i}(-1)^{i+1} v_i(t) \d v_1(t)\wedge\dots\wedge\widehat{\d
    v_i(t)}\wedge\dots\wedge \d v_j'(t)\wedge \dots\wedge\d v_n(t)\Bigg\|_{X_0^{n-1}}\\
 \leq & t^{\sum_i(1-\alpha_i)} \sum_{i=1}^n \left\|(-1)^{i+1} v_i'(t) \d v_1(t)\wedge\dots\wedge\widehat{\d
    v_i(t)}\wedge\dots\wedge\d v_n(t)\right\|_{X_0^{n-1}}
\end{split}\label{eq:30}
\end{equation}
where we  have used  that
\[
\begin{split}
  \|v_i(t) \d v_1(t)\wedge\dots \wedge\widehat{\d
    v_i(t)}\wedge\dots\wedge \d v_j'(t)&
  \wedge\dots\wedge \d v_n(t)\|_{X_0^{n-1}}=\\
  &\| v_j'(t) \d v_1(t)\wedge\dots\wedge \widehat{\d v_i(t)}\wedge\dots\wedge\d
  v_n(t)\|_{X_0^{n-1}}\,,
\end{split}
\]
which in turn is a consequence of 
\[
\begin{split}
  \d\Big(v_i(t) \d v_1(t)\wedge\dots \wedge\widehat{\d
    v_i(t)}\wedge\dots\wedge \d v_j'(t)
    &\wedge\dots\wedge \d v_n(t)\Big)\\=&\d \left((-1)^{j+i} v_j'(t) \d
    v_1(t)\wedge\dots\wedge \widehat{\d v_i(t)}\wedge\dots\wedge\d
    v_n(t)\right)\,.
\end{split}
\]
From \eqref{eq:30} we get
\begin{equation*}
\begin{split}
  \|t^{\sum_i(1-\alpha_i)}w'(t)\|_{X_0^{n-1}}
\leq & C \sum_{i=1}^n \left(t^{1-\alpha_i}\|v_i'(t)\|_{C^0(U)}\right)
\prod_{j\neq i}\left(t^{1-\alpha_j}\|v_j(t)\|_{C^1(U)}\right)\\
  \leq &C \prod_{i=1}^n\|u_i\|_{C^{0,\alpha_i}(U)}\,.
\end{split}
\end{equation*}
Hence $w(0)\in (X_0^{n-1},X_1^{n-1})_{1-\sum_i(1-\alpha_i),\infty}=X_\theta$, with
$\|w(0)\|_{X_{\theta}}\leq C \prod_i\|u_i\|_{C^{0,\alpha_i}}$. The estimate
\eqref{eq:13} follows from $w(0)=M(u_1,\dots,u_n)$. 

To prove the statement about the extension, we  choose
$\beta_i<\alpha_i$ with $\sum_i{\beta_i}=n-1+\tilde\theta$. Then we have

\begin{equation}
\label{eq:14}
\|M(u_1,\dots,u_n)\|_{X_{\tilde \theta}}\leq \prod_{i=1}^n\|u_i\|_{C_{0,\beta_i}}
\end{equation}
for $u=(u_1,\dots,u_n)\in C^1(U;\R^n)$. Now every $\tilde u=(\tilde
u_1,\dots,\tilde u_n)\in
C^{0,\alpha_1}\times \dots\times C^{0,\alpha_n}$ can be approximated in
$C^{0,\beta_1}\times \dots \times C^{0,\beta_n}$ by sequences of functions in
$C^1(U;\R^n)$, and hence the existence of a unique extension follows from \eqref{eq:14}.
\end{proof}

\subsection{Integrating distributional  Jacobians over sets with fractal boundary}

The purpose of the construction of the interpolation space $X_\theta=(X_0^{n-1},X_1^{n-1})_{\theta,\infty}$ has been
 to make its elements suitable for integration over fractals
of dimension up to (but not including) $n-1+\theta$. The corresponding
definition will be given in the present subsection. This will be similar to the
constructions in  \cite{MR1119189}.

In the following, let $U\subset \R^n$ be fixed, with $d:=\bd\partial U<n-1+\theta$. Let $W$ be the Whitney
decomposition of $U$, cf.~Section \ref{sec:whitn-decomp-box}. 
Since   $M\in X_\theta$,   there exists $\tilde M(\cdot)\in
W^{1,\infty}_{\mathrm{loc}}(\R^+;X_0^{n-1})$ with $\tilde M(t)\in X_1^{n-1}$,
$\tilde M'(t)\in X_0^{n-1}$ for all $t>0$ such that 
\[
t^{1-\theta}\left(\|\tilde M(t)\|_{X_1^{n-1}}+\|\tilde M'(t)\|_{X_0^{n-1}}\right)\leq
\| M\|_{X_\theta}\quad\text{ for all }t\in\R^+\,,
\]
and 
\[
\lim_{t\to 0}\|M-\tilde M(t)\|_{X_0^{n-1}}=0\,,
\]
see Section \ref{sec:tools-from-interp}.
\begin{definition}
\label{def:fracint}
Assume that $n-1+\theta>d$.  For $M\in X_\theta$ 
let  the integral $\int_U\d M$ be defined by 
\[
\int_U\d M:=\sum_{Q\in W} \int_Q \d \tilde M(\diam Q)+\int_{\partial Q}
(M-\tilde M(\diam
Q))\,,
\]
where  $\tilde M \in
W^{1,\infty}_{\mathrm{loc}}(\R^+;X_0^{n-1})$ is chosen as above.
\end{definition}

\begin{lemma}
\label{lem:intwelldef}
The above definition makes $\int_U \d M$ well defined for $M\in X_\theta$, and the map
\[
M\mapsto\int_U \d M
\]
is continuous on $X_\theta$ with $|\int_U\d M|\leq C(U)\|M\|_{X_\theta}$.
\end{lemma}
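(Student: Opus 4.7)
The plan is to estimate each term in the defining series on a single Whitney cube in terms of $t_Q := \diam Q$, to sum these contributions using the relation between $\#W_k$ and the box dimension of $\partial U$ provided by Theorem \ref{thm:Nkdb}, and finally to verify that the value is independent of the choice of admissible path $\tilde M$.

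For a Whitney cube $Q$ I would bound the interior piece using $|Q|\leq C t_Q^n$, the identity $\|\d\tilde M(t_Q)\|_{C^0}=\|\tilde M(t_Q)\|_{X_1^{n-1}}$, and the admissibility estimate $t_Q^{1-\theta}\|\tilde M(t_Q)\|_{X_1^{n-1}}\leq \|M\|_{X_\theta}$, obtaining $\bigl|\int_Q\d\tilde M(t_Q)\bigr|\leq C t_Q^{n-1+\theta}\|M\|_{X_\theta}$. For the boundary piece, the key observation is that every closed $C^1$ form $\alpha$ satisfies $\int_{\partial Q}\alpha = \int_Q\d\alpha = 0$, so the quotient $X_0^{n-1}$ norm actually bounds the functional $\omega\mapsto\int_{\partial Q}\omega$. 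Combining this with $M-\tilde M(t) = -\int_0^t \tilde M'(s)\,\d s$ in $X_0^{n-1}$ and $\|\tilde M'(s)\|_{X_0^{n-1}}\leq s^{\theta-1}\|M\|_{X_\theta}$, one gets $\|M-\tilde M(t_Q)\|_{X_0^{n-1}}\leq \theta^{-1} t_Q^\theta\|M\|_{X_\theta}$ and hence $\bigl|\int_{\partial Q}(M-\tilde M(t_Q))\bigr|\leq C t_Q^{n-1+\theta}\|M\|_{X_\theta}$.

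I would then sum by grouping the Whitney cubes into layers $W_k$, where every $Q\in W_k$ has $t_Q = \sqrt n\, 2^{-k}$. Since $d=\bd\partial U<n-1+\theta$, I can pick $d'\in(d,n-1+\theta)$ and apply Theorem \ref{thm:Nkdb} to get $\#W_k\leq C(U)\,2^{kd'}$ for all sufficiently large $k$, while boundedness of $U$ makes only finitely many small-$k$ layers nonempty. This yields
\[
\sum_{Q\in W} t_Q^{n-1+\theta}\;\leq\; C(U)\sum_k 2^{k(d'-(n-1+\theta))}\;\leq\; C(U)\,,
\]
so that the defining series converges absolutely and $\bigl|\int_U\d M\bigr|\leq C(U)\|M\|_{X_\theta}$.

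Independence of the choice of $\tilde M$ is where the only subtle point lies. Given two admissible paths $\tilde M_1,\tilde M_2$, I would set $\tilde N := \tilde M_1-\tilde M_2$ and write the difference of the two candidate values as $\sum_{Q\in W}\bigl(\int_Q \d\tilde N(t_Q)-\int_{\partial Q}\tilde N(t_Q)\bigr)$; each summand should vanish by Stokes' theorem on the cube $Q$. The main obstacle is that $\tilde N(t_Q)$ lies only in $X_0^{n-1}\cap X_1^{n-1}$, which is defined by completion and whose $X_0^{n-1}$ component consists of equivalence classes modulo closed forms. Stokes must therefore be justified by approximating $\tilde N(t_Q)$ by $C^1$ forms $\omega_j$ converging simultaneously in both norms, observing that $\int_Q\d\omega_j\to\int_Q\d\tilde N(t_Q)$ by $X_1^{n-1}$-convergence and that $(\int_{\partial Q}\omega_j)$ is Cauchy thanks to the $X_0^{n-1}$-bound on boundary integrals established above, and finally passing to the limit in the classical Stokes identity for each $\omega_j$. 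Linearity of $M\mapsto\int_U\d M$ is evident from the definition, so continuity follows from the estimate above, completing the proof.
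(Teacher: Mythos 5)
Your proof is correct and follows essentially the same route as the paper: the same cube-by-cube estimates via the admissibility bounds on $\tilde M$ and $\tilde M'$, the same summation over Whitney layers using Theorem \ref{thm:Nkdb} and $d<n-1+\theta$. You are in fact somewhat more careful than the paper on two points it leaves implicit — that closed forms integrate to zero over $\partial Q$ (so the quotient norm $X_0^{n-1}$ really controls the boundary integrals) and that independence of $\tilde M$ reduces to Stokes' theorem on each cube, justified by simultaneous approximation in both norms.
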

\begin{proof}
Let $\tilde M(\cdot)$ as above, and let $Q\in W$. First we estimate
  \[
  \begin{split}
    \left| \int_Q \d \tilde M(\diam Q) \right| \leq &\L^n(Q) \|\tilde M(\diam Q)
    \|_{X^{n-1}_1}\\
    \leq &\L^n(Q) (\diam Q)^{\theta - 1} \|M\|_{X_\theta}\,.
  \end{split}
  \]
To estimate $\int_{\partial Q} (M-\tilde M(\diam Q))$, we first note that
\[
\begin{split}
  \|M-\tilde M(\diam
  Q)\|_{X^{n-1}_0}\leq &\int_0^{\diam Q}\|\tilde M'(t)\|_{X_0^{n-1}}\\
  \leq &C\int_0^{\diam Q}t^{\theta-1}\|M\|_{X_\theta}\\
 \leq & C(\diam Q)^\theta
  \|M\|_{X_\theta}\,.
\end{split}
\]
Hence we get
  \[
  \begin{split}
    \left| \int_{\partial Q} (M-\tilde M(\diam Q)) \right| \leq& \H^{n-1}(Q)
    \|M-\tilde M(\diam
    Q)\|_{X^{n-1}_0}\\
 \leq &C \H^{n-1}(Q)(\diam Q)^\theta \|M\|_{X_\theta} \,.
  \end{split}
  \]
  By Theorem \ref{thm:Nkdb} the number of cubes in $W$ of sidelength
  $2^{-k}$ can be estimated by $C 2^{kd}$, where the constant $C$ may depend on
  the domain $U$, and $d=\bd \partial U$. In this way we obtain
  \[
  \begin{split}
    \left| \int_U \d M \right| \leq &\sum_{Q \in W}\L^n(Q) (\diam Q)^{\theta - 1} \|M\|_{X_\theta}+ \H^{n-1}(Q) (\diam Q)^\theta
    \|M\|_{X_\theta}\\
 \leq C &\sum_{k \in \N} 2^{dk} 2^{-(n-1)k} 2^{-\theta k}
    \|M\|_{X_\theta}\,.
  \end{split}
  \]
By the assumption $d < n-1
 + \theta$
 the infinite sum converges absolutely. This proves 
\[
\left|\int_U\d M\right|\leq C\|M\|_{X_\theta}\,,
\]
and in particular it follows that $\int_U\d M$ does not depend on the choice of
$\tilde M$, which makes the integral well defined. Also,  the continuity of $M\mapsto \int_U\d M$  as a map from
$X_\theta$ to $\R$ follows by linearity.
\end{proof}

We are ready to prove Theorem \ref{thm:main1}. In the proof below, all constants
$C$ may depend on $n,\alpha,p,d$ without explicit statement. 
\begin{proof}[Proof of Theorem \ref{thm:main1}]
In this proof we  assume $p>1$, and  define $p'$ by requiring
$p^{-1}+(p')^{-1}=1$. Note that by assumption, we have $p<n/(n-1)$ and hence $p'>n$.\\
Using the representation of H\"older spaces as trace spaces (cf.~Definition \ref{def:tracespace}),  we 
can choose $ v_i:\R^+\to C^1(U)$, $i\in\{1,\dots,n\}$,  such that
\[
\|t^{1- \alpha}  v_i(t)\|_{
L^\infty(\R^+;C^1(U))}+
\| t^{1-\alpha} v_i'(t)\|_{
L^\infty(\R^+;C^0(U))}\leq C\|u\|_{C^{0,\alpha}(U)}
\]
and 
$ \lim_{t\to 0} v_i(t)= u_i$ in $C^{0}(U)$.

We write $v(t)=(v_1(t),\dots,v_{n}(t))$, and claim that
\begin{equation}
  \begin{split}
    \limsup_{t\to 0}\,\sup&\left\{\int_{\R^n}\deg(v(t),U,y)\varphi(y)\d
      y:\varphi\in
      L^{p'}(\R^n)\cap C^\infty(\R^n),\,\|\varphi\|_{L^{p'}(\R^n)}\leq 1\right\}\\
    <&C\left(\|u\|_{C^{0,\alpha}(U;\R^n)}\right)^{n/p}\,,\label{eq:1}
  \end{split}
\end{equation}
where the constant on the right hand side may depend on $U$.
From the estimate \eqref{eq:1} it follows that $\{\deg(v(t),U,\cdot):t\leq 1\}$  is bounded and hence
precompact in
$L^{p}(\R^n)$. By $v(t)\to u$ in $C^0$, it follows  the pointwise convergence
$\deg(v(t),U,\cdot)\to\deg(u,U,\cdot)$ on $\R^n\setminus u(\partial U)$. By
Lemma \ref{lem:bdrydim}, we have 
$\L^n(u(\partial U))=0$, and hence $\deg(v(t),U,\cdot)\to\deg(u,U,\cdot)$ almost everywhere. In combination with the compactness in $L^p$, it follows $\deg(v(t),U,\cdot)\wto \deg(u,U,\cdot)$
in $L^{p}(\R^n)$. In particular, $\deg(u,U,\cdot)\in L^p$  with
$\|\deg(u,U,\cdot)\|_{L^p}<C\|u\|_{C^{0,\alpha}}^n$.  Since the support of
$\deg(u,U,\cdot)$ is bounded, we also have $\deg(u,U,\cdot)\in L^1(\R^n)$, and
the theorem is proved.
It remains to show \eqref{eq:1}. \\
Let $\varphi\in L^{p'}(\R^n)\cap C^\infty(\R^n)$ with $\|\varphi\|_{L^{p'}}\leq 1$. Let $\zeta\in
W^{2,p'}(\R^n)$ be the  solution of 
\[
\Delta \zeta=\varphi \quad\text{ on }\R^n\,,
\]
and define $\psi\in W^{1,p'}_{\loc}(\R^n;\R^n)$ by
\[
\psi(x)=D \zeta(x)-D\zeta(0)\,.
\]
By standard estimates, we have $\|D\psi\|_{L^{p'}}\leq C
\|\varphi\|_{L^{p'}}$.  Hence by Morrey's inequality, we have
\[
[\psi]_{C^{0,1-n/p'}(U)}\leq C\|\varphi\|_{L^{p'}}\,.
\]
Now since $\psi(0)=0$ we have for any $w\in C^{0,\alpha}(U;\R^n)$
\[
\sup_{x\in U}|\psi\circ w(x)|\leq 
[\psi]_{C^{0,1-n/p'}}\left(\sup_{x\in U}|w(x)|\right)^{1-n/p'}\leq
[\psi]_{C^{0,1-n/p'}}\|w\|^{1-n/p'}_{C^{0,\alpha}}\,.
\]
Furthermore for  $x,y\in U$, we have
\[
\begin{split}
  |\psi\circ w(x)-\psi\circ w(y)|\leq &[\psi]_{C^{0,1-n/p'}}|w(x)-w(y)|^{1-n/p'}\\
  \leq & [\psi]_{C^{0,1-n/p'}}\|w\|_{C^{0,\alpha}}^{1-n/p'}|x-y|^{(1-n/p')\alpha}\,.
\end{split}
\]
Let $\tilde\alpha:=\alpha(1-n/p')$. By the above, we have for all $t>0$,
\begin{equation}
  \begin{split}
    \psi\circ v(t)\in &C^{0,\tilde\alpha}(U;\R^n)\,,\\
 \|\psi\circ
    v(t)\|_{C^{0,\tilde \alpha}(U)}\leq&
    C\|\varphi\|_{L^{p'}}\|v(t)\|^{1-n/p'}_{C^{0,\alpha}(U)}\\
\leq &C\|\varphi\|_{L^{p'}}\|u\|^{1-n/p'}_{C^{0,\alpha}(U)}\,,\label{eq:113}
  \end{split}
\end{equation}
where in the last estimate, we have assumed that $t$ is small enough and used
Lemma \ref{lem:holdtrace}.
Next, for $i=1,\dots,n$, we set $\tilde v_i(t)=\psi_i\circ v(t)$,
and
\[
V^j(t):=\left(v_1(t),\dots,v_{j-1}(t),\tilde v_j(t), v_{j+1}(t),\dots,v_n(t)\right)\,.
\]
For $t>0$, we have
\begin{equation}
\left|\int_{\R^n}\varphi(y)\deg(v(t),U,y)\d y\right|=
\left|\int_U\varphi(v(t)(x))\det D v(t)(x) \d x\right|\,.\label{eq:112}
\end{equation}
Using the relation $\d M(V^j(t))=\det D V^j(t)\d x$, we get
\[
\begin{split}
  \sum_{j=1}^n \d M(V^j(t))|_x=&\sum_{j=1}^n \det DV^j(t)|_x \d x\\
  =&\sum_{j=1}^n \d V^j_1(t)|_x\wedge\dots\wedge \d V^j_n(t)|_x\\
  =&\sum_{j=1}^n \left.\partial_j \psi_j\right|_{v(t)(x)}\d v_1(t)|_x\wedge\dots\wedge \d v_n(t)|_x\\
  =& \varphi(v(t)(x)) \det Dv(t)(x)\d x\,.
\end{split}
\]
Inserting this into \eqref{eq:112}, we get
\[
\left|\int_{\R^n}\varphi(y)\deg(v(t),U,y)\d y\right|=\left|\int_U\sum_{j=1}^n \d
  M(V^j(t))\right|\,.
\]
 We set
\[
\begin{split}
  \bar\theta:=&\theta-d+n-1\\
  =&(1-n/p')\alpha+(n-1)\alpha-d\\
  =& \frac{n\alpha}{p}-d\,.
\end{split}
\]
Note that by  $p<\frac{n\alpha}{d}$, we have $\bar\theta>0$.
Now we apply Lemmas \ref{lem:intwelldef} and  \ref{lem:detinter} to obtain
\[
\begin{split}
  \left|\int_{\R^n}\varphi(y)\deg(v(t),U,y)\d y\right|&\leq
  \sum_j\left|\int_U\d M(V^j(t))\right|\\
\leq & C\sum_j\|M(V^j(t))\|_{X_{\tilde \theta}}\\
  \leq &C \|v(t)\|_{C^{0,\alpha}}^{n/p}\\
  \leq & C \|u\|_{C^{0,\alpha}}^{n/p}
\end{split}
\]
where we have also used Lemma \ref{lem:holdtrace} in the last estimate.
This proves \eqref{eq:1} and hence the theorem.
\end{proof}
\begin{remark}
  By the method of proof we are using, we cannot get the estimate
  $\|deg(u,U,\cdot)\|_{L^1}\leq C\|u\|_{C^{0,\alpha}}^n$, since this would
  require $W^{2,\infty}$ estimates on the solution of $\Delta\zeta=\varphi$ with
  $\|\varphi\|_{L^\infty}\leq 1$, which of course do not hold in general.
\end{remark}

\section{Proof of Theorem \ref{thm:convcor}}
\label{sec:proof-thm2}

\subsection{Proof of Theorem \ref{thm:convcor} (i)}
The  first part of the theorem is just a corollary to Theorem \ref{thm:main1}.

\begin{proof}[Proof of Theorem \ref{thm:convcor} (i)]
Let $p<q<\frac{n\alpha}{d}$.
By Theorem \ref{thm:main1}, we have
\[
\|\deg(u_k,U,\cdot)\|_{L^{q}}\leq C(n,\alpha,q,d) \|u\|_{C^{0,\alpha}}^n\,,
\]
and hence $\deg(u_k,U,\cdot)$ is weakly compact in $L^{q}$. 
In particular, $|\deg(u_k,U,\cdot)|^{p}$ is equi-integrable. To show the strong convergence $\deg(u_k,U,\cdot)\to \deg(u,U,\cdot)$ in $L^p$, it is sufficient to show $\deg(u_k,U,\cdot)\to \deg(u,U,\cdot)$ in measure, i.e., for every $\delta>0$,
\[
\L^n\left(\{y:|\deg(u_k,U,y)-\deg(u,U,y)|>\delta\}\right)\to 0 \text{ as }k\to \infty\,.
\]
Since the Brouwer degree is integer-valued, this is equivalent to
\begin{equation}
\L^n\left(\{y:\deg(u_k,U,y)\neq\deg(u,U,y)\}\right)\to 0 \text{ as }k\to \infty\,.\label{eq:116}
\end{equation}
Indeed, let $\e>0$, and choose $k_0$  large enough that $\sup|u-u_k|<\e/2$ for $k>k_0$. Then
\[
y\not \in \{t u(x)+(1-t)u_k(x):t\in[0,1],\,x\in \partial U\}\quad \text{ for all }y\in \R^n\setminus (u(\partial U))_\e\,, \,k>k_0\,.
\]
By the homotopy invariance of the degree, this implies
\[
\deg(u_k,U,y)=\deg(u,U,y)\quad \text{ for all }y\in \R^n\setminus (u(\partial U))_\e\,, \,k>k_0\,.
\]
The claim \eqref{eq:116} now follows from Lemma \ref{lem:bdrydim}. This proves (i).
\end{proof}

\subsection{Proof of Theorem \ref{thm:convcor} (ii)}

The present section and Section \ref{sec:proof-lemmas-sec4} are devoted to the
proof of Theorem \ref{thm:convcor} (ii). It consists of a rather explicit
construction of an example.\\
The basic idea is that one considers sequences $u_m$ of functions defined on
  a self-similar set of given box dimension $d$ (which is the boundary of some
  open set $U$). As the index $m$ increases, the functions $u_m$ use smaller and
  smaller  scales of the self-similar set $\partial U$ to develop ``loops''. Each of
  these loops increases the degree, and has (locally)  controlled $\tilde\alpha$-H\"older
  semi-norm, where $\tilde \alpha$ is slightly larger than $\alpha$. Thus one constructs a sequence that converges to 0  in $C^{0,
    \alpha}$  for which the $L^p$  norm
  of the  degree diverges. 
 For the reader's convenience, we first outline the strategy of proof in a
 little more detail.
\begin{itemize}
\item[1.] In Lemma \ref{lem:Umconst} and \ref{def:UUm}, we construct the
  self-similar set $\partial U$ and the pre-fractals $\partial U^m$, that will
  be helpful for the definition of the loops at scale $m$. To lift maps defined
  on $\partial U^m$ to $\partial U$, we define certain projection maps (see
  Lemma \ref{lem:Pconst}). 
\item[2.] Then we define ``single loops'' (Lemma  \ref{lem:xirhodef}). 
These are defined on $(n-1)$-dimensional boxes of sidelength one.
Also, we find  collections of disjoint $(n-1)$-dimensional boxes on
$\partial U^m$ of sidelength $r^m$ (Lemma \ref{lem:Qmconst}). We work with
Euclidean motions and rescalings to lift the ``single loop'' to each of these
boxes, such that the resulting map will have controlled $\tilde\alpha$-H\"older
semi-norm (see
Definition \ref{def:xizetarho} and Notation \ref{not:EBim}), where $\tilde \alpha $ is slightly larger that $\alpha$. 
\item[3.] We then use the compact embedding between H\"older spaces to  show
  that these functions converge to 0 in $C^{0,\alpha}$, while we may use Lemma
  \ref{lem:sep} to show that the associated Brouwer degrees diverge in $L^p$.
\end{itemize}

From now on, we assume $\tilde\alpha$ to be fixed such that 
\begin{equation}
\alpha<\tilde\alpha<\frac{np}{d}\,.\label{eq:11}
\end{equation}

We collect some useful notation. Firstly, we set
\[
\begin{split}
  L=&\{(x,0)\in \R^2:x\in[0,1]\}\,,\\
  D=&\{(x,y)\in\R^2:0<x<1,\,0<y< \min(x,1-x)\}\,.
\end{split}
\]

\begin{lemma}
\label{lem:Umconst}
Let $1<\bar d<2$. Then there exist $r>0$, $N\in \N$, and for each $i\in\{1,\dots,N\}$ a similarity $S_i:\R^2\to \R^2$ such that the following properties are fulfilled:
\begin{itemize}
\item[(i)] $\H^1(S_i(L))=r$ for $i=1,\dots,N$.
\item[(ii)]  The union $\cup_{i=1}^NS_i(L)$ is the image of a continuous curve with start point $(0,0)$ and end point $(1,0)$.
\item[(iii)] $Nr^d=1$.
\item[(iv)] For $i,j\in\{1,\dots,N\}$, we have
\[
\begin{split}
  S_{i}(D)&\subset D\,,\\
S_{i}(D)\cap
  S_{j}(D)&=\emptyset\quad \text{ if }i\neq j\,,\\
  S_{i}(\overline D)\cap
  S_{j}(\overline D)&=\emptyset\quad \text{ if }
  |i-j|>1\,.
\end{split}
\]
\item[(v)] $r<\frac12$ and $2r^{1-\alpha}\leq 1$.
\end{itemize}
\end{lemma}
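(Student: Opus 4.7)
This is a constructive statement. Using the discussion preceding Lemma \ref{lem:selfsim}, it suffices to exhibit a \emph{generator}: a polygonal path from $(0,0)$ to $(1,0)$ with $N$ straight segments of a common length $r$, such that all the associated orientation-preserving similarities $S_i$ (sending $(0,0) \mapsto P_{i-1}$, $(1,0) \mapsto P_i$) have the required properties. The ratio of each $S_i$ is then automatically $r$, giving (i) for free, and continuity of the path from $(0,0)$ to $(1,0)$ gives (ii). The overall strategy is to build a family of sawtooth-like generators parametrized by $N$ and a height $h$, and use the intermediate value theorem to solve Moran's equation $Nr^{\bar d}=1$.

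\textbf{Step 1: parametrized family of zigzag generators.} For each sufficiently large integer $N$ and each small $h>0$ in an admissible range, I would define a polygonal path with $N$ segments of equal length $r = r(N,h)$, consisting of a first segment rising from $(0,0)$ into the interior of $D$, a sawtooth of $N-2$ segments oscillating at heights in $[h,h+\eta(h)]$ across $[0,1]$, and a final segment descending to $(1,0)$. The four geometric conditions (equal segment length, correct start and end points, matching vertical excursion) reduce to solving a small system of equations whose parameters I can adjust in $h$ continuously. As $h$ ranges over an interval, $r(N,h)$ varies continuously through a neighbourhood of $1/N$, and in particular for $N$ large attains the value $N^{-1/\bar d}$ (using $\bar d>1$, so $N^{-1/\bar d}>1/N$). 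Choosing $h$ so that $r(N,h)=N^{-1/\bar d}$ yields (iii).

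\textbf{Step 2: checking (iv) and (v).} Property (v) is immediate for $N$ large, since $r=N^{-1/\bar d}\to 0$. For (iv), note that since $S_i$ is orientation preserving and maps $L$ onto the $i$-th generator segment, the image $S_i(D)$ is a scaled-by-$r$ copy of $D$ attached to the ``left'' of that oriented segment. I would arrange the sawtooth so that all segments are nearly horizontal (say, with angle to the $x$-axis bounded by $\pi/6$) and the zigzag amplitude $\eta$ and horizontal tooth spacing are both of order $r$. Under this calibration: (a) each $S_i(D)$ sits in a box of diameter $O(r)$ centered well inside $D$, so $S_i(D)\subset D$; (b) consecutive triangles $S_i(D), S_{i+1}(D)$ are placed on the two sides of the vertex $P_i$ with an opening angle bounded away from $\pi$, so they meet only at that single point, which lies outside the open set $D$-image and gives $S_i(D)\cap S_{i+1}(D)=\emptyset$; (c) for $|i-j|>1$ the corresponding segments are separated horizontally by at least one tooth-width $\gtrsim r$, hence $\overline{S_i(D)}\cap\overline{S_j(D)}=\emptyset$.

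\textbf{Main obstacle.} The conceptual content is entirely in (iv): one must simultaneously fit each small triangle $S_i(D)$ inside $D$ and keep non-adjacent triangles strictly separated, while solving Moran's equation. This is elementary but careful Euclidean bookkeeping. The scale separation between $r$ (small) and the macroscopic geometry of $D$ makes everything work for $N$ sufficiently large, so the only real choice is calibrating the sawtooth height $\eta$ against $r$. Once that is done, all five conditions follow by inspection of the explicit geometry.
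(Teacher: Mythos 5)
Your skeleton --- a zigzag generator with $N$ equal segments of length $r$, plus an intermediate-value argument in a continuous shape parameter to solve Moran's equation $Nr^{\bar d}=1$ --- is the same as the paper's, which fixes the combinatorics ($N=4M^2+1$ segments organized into spikes of heights $1,2,\dots,M,\dots,2,1$, each spike separated from the next by a horizontal segment) and varies the angle $\beta$ of the non-horizontal segments. The gap is that your verification of (iv) is built on a geometric picture that is incompatible with (iii). Since $\bar d>1$, you need $r=N^{-1/\bar d}$, hence total length $Nr=N^{1-1/\bar d}\to\infty$ while the generator spans horizontal distance $1$; so the horizontal advance per segment is about $1/N\ll r$, the segments must be nearly \emph{vertical}, and the teeth have height of order $r$ but width only $O(1/N)$. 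Your calibration ``all segments nearly horizontal, angle at most $\pi/6$'' forces $Nr\leq 2/\sqrt 3$ and can only produce dimensions tending to $1$, never a fixed $\bar d\in(1,2)$. With the calibration you actually need, your argument for the third line of (iv) collapses: segments with $|i-j|>1$ (e.g.\ the ascending edges of two consecutive teeth) are separated horizontally by only $O(1/N)$, while each triangle $S_i(\overline D)$ protrudes from its nearly vertical base by up to $r/2\gg 1/N$ in the horizontal direction, so the closures of non-adjacent triangles overlap badly.

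Two further concrete failures. First, at every valley of a plain up/down sawtooth the two \emph{adjacent} open triangles already intersect: if $\theta$ denotes the angle of the segments to the vertical, the triangle of the incoming (descending) segment occupies the angular sector $[\pi/4+\theta,\,\pi/2+\theta]$ at the shared vertex and that of the outgoing (ascending) segment the sector $[\pi/2-\theta,\,3\pi/4-\theta]$; these overlap in a sector of opening $2\theta>0$, violating $S_i(D)\cap S_{i+1}(D)=\emptyset$. This is precisely why the paper inserts a horizontal segment at the bottom and at the top of every spike, so that the only transitions are horizontal-to-up, up-to-horizontal, horizontal-to-down, down-to-horizontal, whose sectors are disjoint for every positive angle, and why consecutive non-horizontal segments within a run are taken collinear. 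Second, a sawtooth oscillating at a uniform height $[h,h+\eta]$ with $\eta$ of order $r$ cannot satisfy $S_i(D)\subset D$ near the corners $(0,0)$ and $(1,0)$: there $D$ has height only $\min(x,1-x)$, while your first teeth sit at horizontal position $O(1/N)\ll r$ yet reach height of order $r$ together with their attached triangles. The paper's increasing-then-decreasing spike heights exist exactly to keep the construction under the lines $y=x$ and $y=1-x$. These obstructions are all repairable --- the paper's proof is the repair --- but they are where the content of the lemma lies, and the proposal does not address them.
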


The proof can be found in Section \ref{sec:proof-lemmas-sec4}.\\
\\
For the rest of this section, let $d,n$ be fixed with 
$n-1\leq d<n$. Further, let $\bar d=d-(n-2)$ and fix $N,r$ and a set of
similarities $\mathcal S=\{S_1,\dots,S_N\}$ as in Lemma \ref{lem:Umconst}. In
the following, we are going to use the notation introduced in Section \ref{sec:self-simil-fract}.\\
\\
We now define four (orientation preserving)
Euclidean motions $S^*_1,\dots,S^*_4:\R^2\to\R^2 $ by their actions on $(0,0)$ and $(1,0)$,
\[
\begin{array}{rlrl}
S^*_1(0,0)=&(0,1)\quad &S^*_1(1,0)=&(1,1)\\ 
S^*_2(0,0)=&(1,1)\quad &S^*_2(1,0)=&(1,0)\\ 
S^*_3(0,0)=&(1,0)\quad &S^*_3(1,0)=&(0,0)\\ 
S^*_4(0,0)=&(0,0)\quad &S^*_4(1,0)=&(0,1)\,.
\end{array}
\]
Next, for $i_0\in\{1,\dots, 4\}$ and $i_1,\dots,i_m\in \{1,\dots,N\}$, we introduce  the notation
\[
S_{i_0|i_1,\dots,i_m}=S_{i_0}^*\circ S_{i_1}\circ\dots \circ S_{i_m}\,.
\]
\begin{definition}
\label{def:UUm}
For $m\in\N$, let $\tilde U^m\subset \R^2$ be the bounded open set with boundary

\begin{equation}
\begin{split}
  \partial \tilde U^m:=&\bigcup_{i_0=1}^4S_{i_0}^*S^m(L)\,\\
  =&\bigcup_{\substack{i_0\in\{1,\dots,4\}\\i_1,\dots,i_m\in\{1,\dots,N\}}}S_{i_0|i_1,\dots,i_m}(L)\,.
\end{split}\label{eq:47}
\end{equation}
Further, let $\tilde U\subset \R^2$ be the bounded open set with boundary

\begin{equation}
\partial \tilde U=
  \bigcup_{i_0=1}^4S_{i_0}^*K(\S)\,,\label{eq:48}
  \end{equation}
where $K(\S)$ is the attractor set of $\S$, cf.~Section \ref{sec:self-simil-fract}.
 For $m\in\N$, we define
$U^m=\tilde U^m\times (0,1)^{n-2}$ and moreover, $U=\tilde U\times (0,1)^{n-2}$. 
\end{definition}
For a sketch of $\partial U^0,\partial U^1$ and $\partial U^4$ (with $n=2$, $N=5$) see  Figure \ref{fig:Umsketch}. 
\begin{figure}[h]
\begin{center}
\includegraphics[width=.95\linewidth]{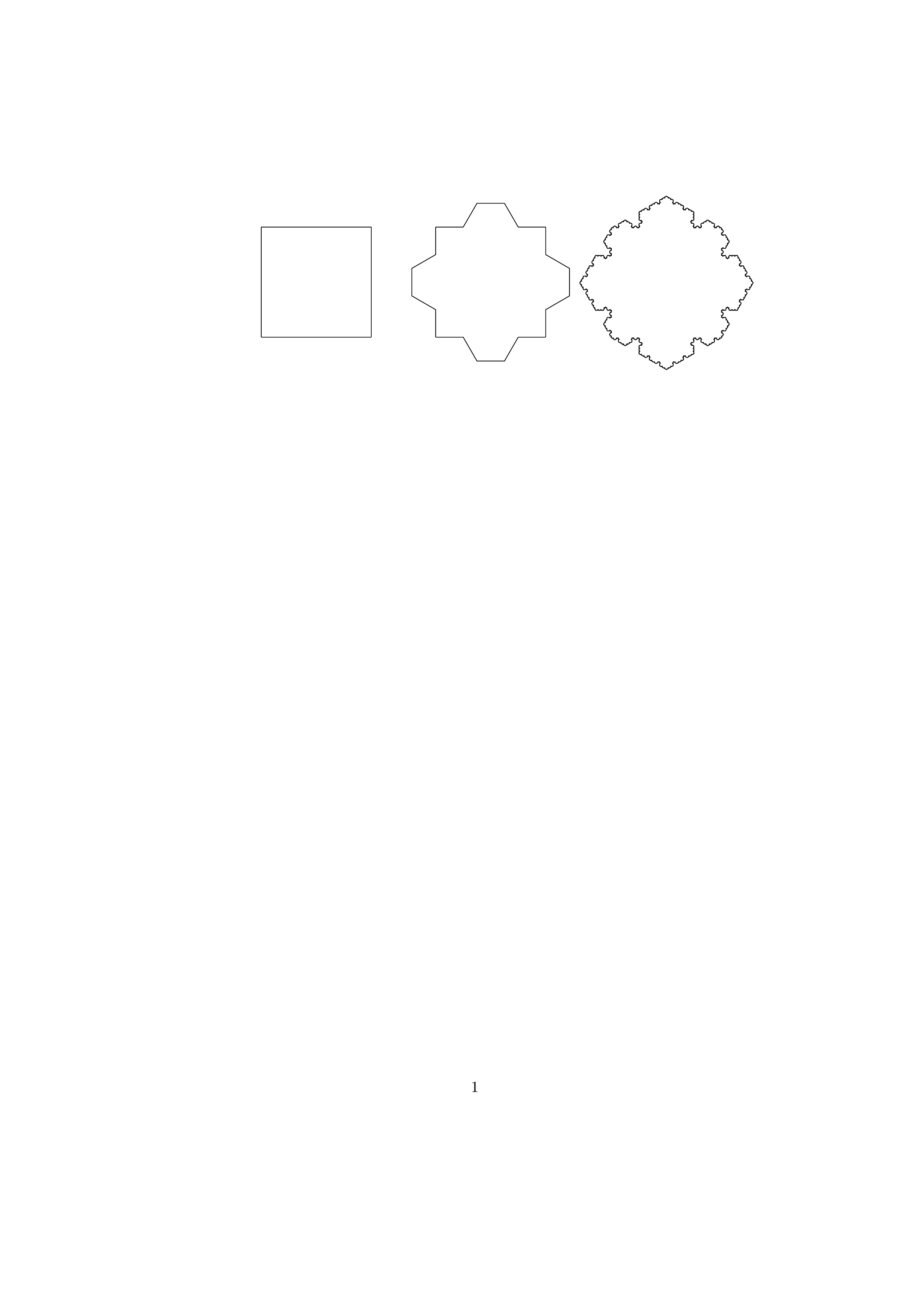}
\caption{The pre-fractals $\partial U^0,\partial U^1$ and $\partial U^4$ (with $n=2$, $N=5$).  \label{fig:Umsketch}}
\end{center}
\end{figure}

\begin{remark}
The sets $\tilde U^m$ and $\tilde  U$ are well defined, since the right hand
sides in \eqref{eq:47}, \eqref{eq:48} are closed  curves by Lemma
\ref{lem:Umconst} (ii) and the definition of the Euclidean motions $S_i^*$, $i=1,\dots,4$.
Also note that by Lemma \ref{lem:Umconst} (iii) and (iv) and Lemma \ref{lem:selfsim} we have $\db \partial U=d$.
\end{remark}

We will need certain ``projection maps'' to pull back maps defined on the
pre-fractals $\partial U^m$ to the fractal $\partial U$:

\begin{lemma}
\label{lem:Pconst}
For every $m\in\N$, there exist  Lipschitz
  maps $P_{m+1}^m:\overline{ U^{m+1}}\setminus U^m\to\partial U^m$ and
  $P^m:\overline{U}\to\overline {U^m}$ with the following properties: 
  \begin{itemize}
  \item $\mathrm{Lip}(P^m_{m+1})\leq 1$, $\mathrm{Lip}(P^m)\leq 1$.
\item If $z=(z',z'')\in\overline{ U^{m+1}}$ with $z'\in\R^2$, $z''\in\R^{n-2}$, then
    $P_{m+1}^m(z)=(\bar z,z'')$ for some $\bar z\in\R^2$.
  \end{itemize}
\end{lemma}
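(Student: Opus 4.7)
The plan is to reduce to two dimensions via the product structure $U^m=\tilde U^m\times(0,1)^{n-2}$, construct 2D analogues $\tilde P^m_{m+1}$ and $\tilde P^m$, and then lift by preserving the $z''$-coordinate. First I would check the nesting $\tilde U^0\subset\tilde U^1\subset\cdots\subset\tilde U$: each $S^*_{i_0}(D)$ lies strictly outside the unit square on the outward side of the $i_0$-th edge, and since $S_i(D)\subset D$ by Lemma \ref{lem:Umconst}(iv), every iterate $S_{i_0|i_1,\dots,i_m}(\overline D)$ remains outside the unit square, so each refinement only appends outward bumps. This yields
\[
\overline{\tilde U^{m+1}}\setminus\tilde U^m=\bigcup_{i_0,i_1,\dots,i_m}B_{i_0|i_1,\dots,i_m},
\]
where each bump $B_{i_0|i_1,\dots,i_m}$ sits in the closed triangle $T_{i_0|i_1,\dots,i_m}:=S_{i_0|i_1,\dots,i_m}(\overline D)$ attached to its base segment $L_{i_0|i_1,\dots,i_m}:=S_{i_0|i_1,\dots,i_m}(L)\subset\partial\tilde U^m$; iterating Lemma \ref{lem:Umconst}(iv), these triangles have pairwise disjoint interiors and touch only at vertices of $\partial\tilde U^m$.

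I would define $\tilde P^m_{m+1}$ piecewise as the orthogonal projection of each $T_{i_0|i_1,\dots,i_m}$ onto its base $L_{i_0|i_1,\dots,i_m}$: the image lands in the base because the orthogonal projection of $\overline D$ onto $[0,1]\times\{0\}$ maps $\overline D$ into its base. Each piece is 1-Lipschitz and the pieces agree on shared corners, giving a continuous map. The global 1-Lipschitz estimate reduces, for $p_1\in T_1, p_2\in T_2$ in adjacent triangles sharing a vertex $v$, to an elementary planar comparison---analogous to the model case of two triangular bumps attached to a corner of the unit square---that goes through provided the outward angle of $\partial\tilde U^m$ at $v$ is at least $\pi/2$; this bound holds because the two triangles each subtend only $\pi/4$ at $v$ (matching the $\pi/4$ base angles of $D$) and the open-set condition guarantees enough outward room. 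Then $\tilde P^m:\overline{\tilde U}\to\overline{\tilde U^m}$ is defined as the uniform limit of the compositions $\tilde P^m_{m+1}\circ\tilde P^{m+1}_{m+2}\circ\cdots\circ\tilde P^{k-1}_k$ as $k\to\infty$: each factor is 1-Lipschitz and moves points by at most the $O(r^k)$ diameter of a level-$k$ bump, so the sequence is uniformly Cauchy and its limit is 1-Lipschitz with image in $\overline{\tilde U^m}$.

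Finally I would lift to $n$ dimensions by setting $P^m_{m+1}(z',z''):=(\tilde P^m_{m+1}(z'),z'')$ on $\{z'\notin\tilde U^m\}$, extended by the identity on the remaining part of $\overline{U^{m+1}}\setminus U^m$ (where $z''\in\partial[0,1]^{n-2}$ forces the point already into $\partial U^m$); similarly $P^m(z',z''):=(\tilde P^m(z'),z'')$. Both branches of $P^m_{m+1}$ agree on the overlap $\partial\tilde U^m\times\partial[0,1]^{n-2}$, preserve the last $n-2$ coordinates as required, and inherit the 1-Lipschitz property from the 2D constructions. I expect the main obstacle to be the planar 1-Lipschitz verification across adjacent bump triangles at shared vertices, which reduces to a uniform lower bound on the outward angle of $\partial\tilde U^m$; once this is extracted from the open-set condition of Lemma \ref{lem:Umconst}(iv) together with the $\pi/4$ base-angle geometry of $D$, the remaining steps are routine.
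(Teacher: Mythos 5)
Your construction is the same as the paper's: reduce to $n=2$ by carrying the last $n-2$ coordinates along, send each first-generation bump to its base by the similarity-conjugate of the vertical projection $P:A\to L$, $(x_1,x_2)\mapsto(x_1,0)$ (the paper projects only the bump $A$ between $L$ and $S(L)$ rather than all of $\overline D$, but on the relevant domain this is the same map), and obtain $P^m$ by composing the one-step maps and passing to the limit; your uniform-Cauchy argument is equivalent to the paper's definition of $P^m$ piece by piece on the annuli $\overline{U^{k+1}}\setminus U^k$ and on $\partial U$ via nested compacta. You also correctly single out the two-bump Lipschitz comparison as the decisive step; the paper disposes of it with the single phrase that it ``follows from $\Lip P\le 1$''.

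The gap is that your justification of this step does not work, because the configuration you would need to rule out actually occurs. At a left turn of the generator, say at $v=p(1)=(r,0)$ where the direction changes from $(1,0)$ to $(\sin\beta_0,\cos\beta_0)$, the outward angle between the two bases is exactly $\pi/2+\beta_0$, so the two closed $\pi/4$-wedges $S_1(\overline D)$ and $S_2(\overline D)$ face each other across an angular gap of only $\beta_0$ (for $\beta_0=0$ they would share the entire edge from $(r,0)$ to $(r/2,r/2)$). Your planar comparison fails there. Working modulo the Euclidean motion $S^*_{i_0}$, which does not affect distances, take $p_1=S_1(p(N-2))$ and $p_2=S_2(p(2))$; these lie on $S_1(S(L))$ and $S_2(S(L))$, hence in the closed bumps $S_1(\overline A)$, $S_2(\overline A)$ and in the domain of the one-step projection $P^1_2$. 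Both are at distance $r^2\sqrt{2+2\sin\beta_0}$ from $v$ with angular separation $2\beta_0$ as seen from $v$, so $|p_1-p_2|=2\sin(\beta_0)\,r^2\sqrt{2+2\sin\beta_0}$, whereas their images $q_1=\bigl(r-r^2(1+\sin\beta_0),0\bigr)$ and $q_2=v+r^2(1+\sin\beta_0)(\sin\beta_0,\cos\beta_0)$ satisfy $|q_1-q_2|=r^2(1+\sin\beta_0)\sqrt{2+2\sin\beta_0}$. The one-step Lipschitz constant is therefore at least $(1+\sin\beta_0)/(2\sin\beta_0)>1$, and it blows up as $\beta_0\to0$, i.e.\ as $\bar d\to 2$. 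Since $P^m$ is an unbounded composition of one-step maps, a per-step constant strictly larger than $1$ is fatal, so the estimate cannot be reduced to ``outward angle at least $\pi/2$ plus $\pi/4$ wedges'': near these vertices the two bumps run nearly parallel to one another while their bases are nearly orthogonal, and projecting each onto its own base tears nearby points apart. You have in fact reproduced the paper's argument faithfully --- the paper asserts $\Lip P^m_{m+1}\le1$ with no further justification, so the difficulty is inherited from the source --- but the angle argument you propose does not close this step, and repairing it requires a genuinely different idea (or a different projection) at the left-turn vertices.
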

Again, the proof is postponed to Section \ref{sec:proof-lemmas-sec4}.\\
In the statement of the next lemma, we set $B:=B^{(n-1)}(0,1)\times \{0\}$, and
by slight abuse of notation, we write $\partial B:=\left(\partial
  B^{(n-1)}(0,1)\right) \times \{0\}$.
\begin{lemma}
\label{lem:xirhodef}
There exists a Lipschitz map $\tilde \zeta:\overline B\to  \R^n$ (whose
Lipschitz constant only depends on $n$) with the following properties:
\begin{itemize}
\item[(i)] 
$\tilde \zeta(B)\subset \Sb^{n-1}$ and $\tilde \zeta(x)=-e_n$ for all
  $x\in \partial B$.
\item[(ii)]
  If $W\subset \R^n$ is open and bounded with Lipschitz boundary such that
  \[ \overline B\subset \partial W\,
  \]
and the outer  normal to $W$ on $B$ is $e_n$,
  then   $\zeta^{(W)}:\partial W\to \R^n$ defined by 
  \[
  \zeta^{(W)}(x)=\begin{cases}\tilde \zeta(x) &\text{ if }x\in \overline B\\
    -e_n &\text{ else}\end{cases}
  \]
  satisfies
  \[
  \degd(\zeta^{(W)},\partial W ,y)\eq \begin{cases} 1 & \text{ if }y\in B(0,1)\\
    0 & \text{ else. }\end{cases}
  \]
\end{itemize}
\end{lemma}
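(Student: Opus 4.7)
The plan is to construct $\tilde\zeta$ as a smooth map that collapses $\partial B$ to the south pole $-e_n$ of $\Sb^{n-1}$, and then to compute the degree in (ii) by invoking Lemma \ref{lem:sep} to isolate the contribution from $B$ before reducing to a standard topological degree computation.

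Concretely, I would set
\[
\tilde\zeta(x,0):=\left(\tfrac{\sin(\pi|x|)}{|x|}\,x,\;\cos(\pi|x|)\right)\in\R^n,\qquad x\in\overline{B^{(n-1)}(0,1)}.
\]
Both $\sin(\pi r)/r$ and $\cos(\pi r)$ are even real-analytic functions of $r$, hence smooth functions of $|x|^2$; thus $\tilde\zeta$ is smooth on all of $\R^{n-1}$ and in particular Lipschitz on the compact set $\overline B$ with constant depending only on $n$. Property (i) then follows at once from $\sin^2+\cos^2=1$ and from $\sin\pi=0$, $\cos\pi=-1$.

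For (ii), I would apply Lemma \ref{lem:sep} with $U=W$, $V=B$, $u=\zeta^{(W)}$ and $y_0=-e_n$: the hypothesis $u\equiv y_0$ on $\tilde\partial V=\partial B$ is exactly (i), and one checks that $u_1$ coincides with $\zeta^{(W)}$ (both are $-e_n$ off $B$), while $u_2\equiv -e_n$. A constant map has trivial degree off its image, so $\degd(u_2,\partial W\setminus V,y)=0$ for all $y\neq-e_n$, and consequently
\[
\degd(\zeta^{(W)},\partial W,y)=\degd(u_1,B,y)\quad\text{for all }y\in\R^n\setminus\Sb^{n-1}.
\]
To identify this right-hand side with $\chi_{B(0,1)}(y)$, note that $u_1|_{\overline B}=\tilde\zeta$ collapses $\partial B$ to $-e_n$, so it descends through the quotient $\overline B/\partial B\cong\Sb^{n-1}$ to a continuous map $\bar\zeta:\Sb^{n-1}\to\Sb^{n-1}$; the outer-normal condition on $W$ is precisely what ensures $\degd(u_1,B,\cdot)$ computes the topological degree of $\bar\zeta$ with the standard sign. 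To evaluate that degree, compose $\tilde\zeta$ with the stereographic projection $\sigma:\Sb^{n-1}\setminus\{-e_n\}\to\R^{n-1}$ from $-e_n$: the half-angle identity $\sin\theta/(1+\cos\theta)=\tan(\theta/2)$ gives
\[
\sigma\circ\tilde\zeta(x,0)=\tan(\pi|x|/2)\,\tfrac{x}{|x|},
\]
an orientation-preserving radial homeomorphism from $B^{(n-1)}(0,1)$ onto $\R^{n-1}$. Thus $\bar\zeta$ has topological degree $+1$, completing (ii).

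The main technical obstacle I anticipate is the orientation bookkeeping: carefully verifying that the hypothesis ``outer normal to $W$ on $B$ is $e_n$'' translates into the induced orientation on $B$ making the degree equal $+1$ rather than $-1$. A subsidiary point is rigorously connecting $\degd(u_1,B,\cdot)$ to the topological degree of $\bar\zeta$, which can be done either by building an explicit Lipschitz extension of $u_1$ across $B$ into the $n$-ball $\overline{B(0,1)}$ and applying the change-of-variables formula \eqref{eq:51}, or by invoking homotopy invariance to reduce to the model half-ball $W_0=B(0,1)\cap\{x_n<0\}$ where such an extension is transparent.
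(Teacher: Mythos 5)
Your construction of $\tilde\zeta$ is identical to the paper's, and your smoothness argument (evenness of $\sin(\pi r)/r$ and $\cos(\pi r)$ in $r$) is a clean substitute for the paper's explicit computation of $D\tilde\zeta$. Part (ii) is where you diverge. The paper first observes that $\zeta^{(W)}$ hits every point of $\Sb^{n-1}\setminus\{-e_n\}$ exactly once, so the degree is some $k\in\{-1,+1\}$ on $B(0,1)$ and $0$ outside; it then pins down $k=+1$ by building a Lipschitz extension $\lambda$ of $\zeta^{(W)}$ into $\overline W$ with $\partial_n\lambda=\zeta^{(W)}$ on $B$, writing $\det D\lambda$ as a divergence, and evaluating the resulting flux integral, which localizes to $B$ (since $f$ vanishes on $\partial W\setminus B$) and is manifestly positive. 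You instead pass to the quotient map $\overline B/\partial B\cong\Sb^{n-1}\to\Sb^{n-1}$ and compute its topological degree via stereographic projection; this is conceptually tidier, but the two points you flag as ``obstacles'' --- relating $\degd(\zeta^{(W)},\partial W,\cdot)$ to the degree of the quotient sphere map, and the orientation bookkeeping tied to the hypothesis that the outer normal on $B$ is $e_n$ --- are precisely where the paper spends its effort, and your first proposed remedy (explicit extension plus the change-of-variables formula \eqref{eq:51}) is exactly the paper's argument. So your plan is sound, but as written it defers rather than resolves the sign determination. Two minor remarks: your invocation of Lemma \ref{lem:sep} is vacuous here, since with $V=B$ and $y_0=-e_n$ one has $u_1=\zeta^{(W)}$ and $u_2\equiv-e_n$, so the identity you derive is a tautology; and note that Lemma \ref{lem:sep} is stated for Lipschitz domains and Lipschitz $u$ on $\overline U$, which is fine here but means it adds nothing over directly observing that $\zeta^{(W)}$ is constant off $B$.
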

Again, the proof is postponed to Section \ref{sec:proof-lemmas-sec4}.
\begin{definition}
\label{def:xizetarho}
Let $\rho>0$ and let $W\subset\R^n$ be an open bounded Lipschitz set with
$B_\rho:=B^{n-1}(0,\rho)\times\{0\}\subset \partial W$, such that the  outer normal
of $W$  on $B_\rho$ is $e_n$. Then we define a Lipschitz
map $\zeta_\rho^{(W)}:\partial W\to\R^n$ by
\[
  \zeta_\rho^{(W)}(x)=\begin{cases}\rho^{\tilde \alpha} \tilde \zeta(x/\rho) &
    \text{ if }
    x\in \overline{B_\rho}\\
    -\rho^{\tilde \alpha} e_n & \text{ else, }\end{cases}
\]
where $\tilde \zeta$ has been defined in Lemma \ref{lem:xirhodef}.
\end{definition}

See Figure \ref{fig:ezx} for a sketch of $\zeta_\rho^{(W)}$. From now on, we are going
to drop the superscript $(W)$ for ease of notation, and write $\zeta_\rho^{(W)}\equiv \zeta_\rho$.
\begin{figure}[h]
\begin{center}
\includegraphics[width=.8\linewidth]{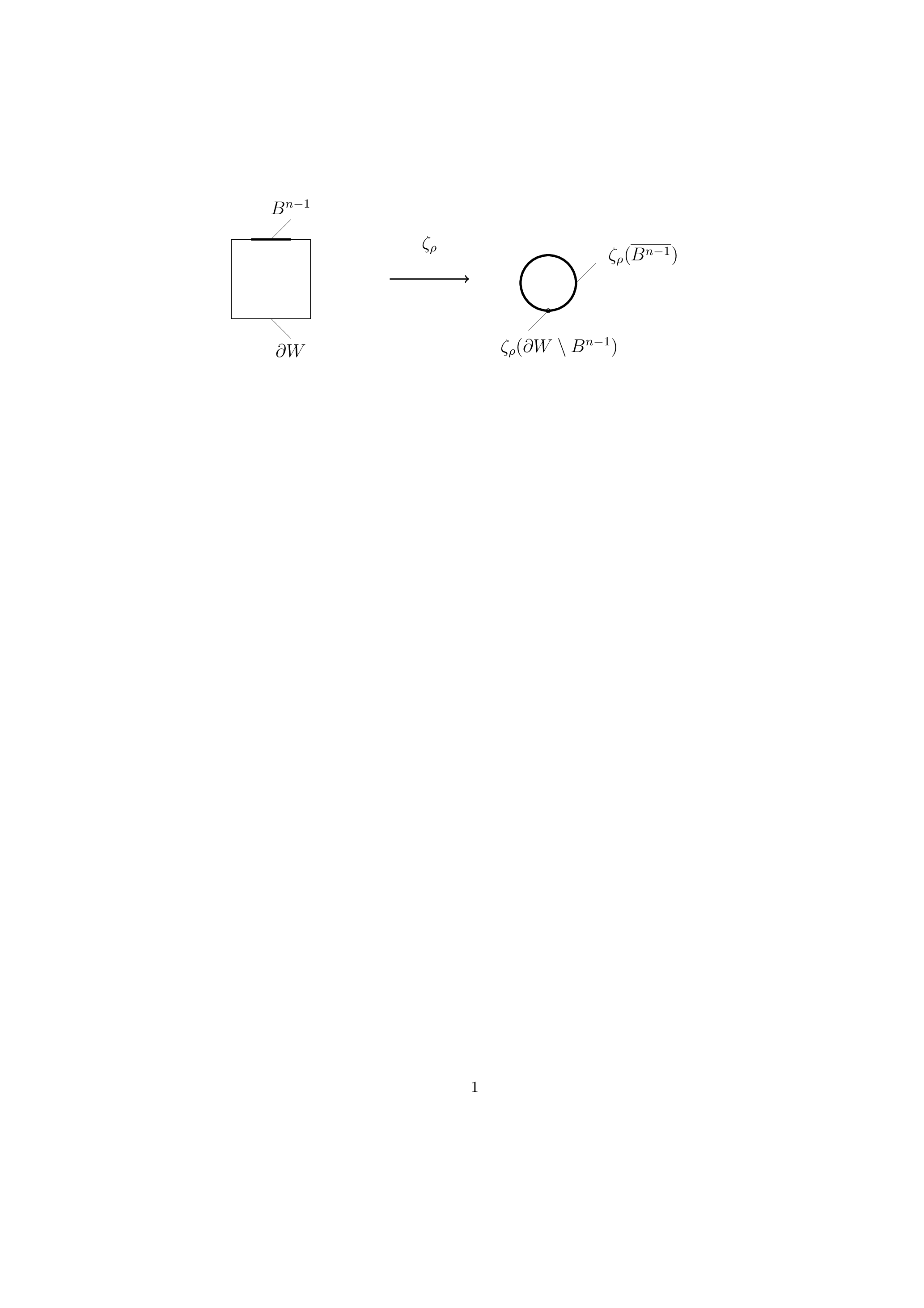}
\caption{A sketch of the construction from Definition
  \ref{def:xizetarho}. In the figure, $B^{n-1}\equiv B^{n-1}(0,\rho/2)\times\{0\}$.  \label{fig:ezx}}
\end{center}
\end{figure}

\begin{remark}
\label{rem:xizetarho}
\begin{itemize}
\item[(i)] As a  consequence of Lemma \ref{lem:xirhodef} (i) and (ii), $\zeta_\rho$ is indeed
  a well defined Lipschitz map with
  \[
  \Lip \zeta_\rho\leq C(n)\rho^{\tilde \alpha-1}\,.
  \]
\item[(ii)] By Lemma \ref{lem:xirhodef} (ii), we have
  \[
  \degd(\zeta_\rho,\partial W,y)\eq \begin{cases}1 & \text{ if }y\in
    B(0,\rho^{\tilde \alpha})\\
    0 & \text{ else. }\end{cases}
  \]

\end{itemize}
\end{remark}

\newcommand{\Q}{{\mathcal Q}}

In the next lemma, by an ``$(n-1)$-dimensional box'', we mean the image of
$[-\rho,\rho]^{n-1}\times\{0\}$ under some Euclidean motion, for some $\rho>0$.
\begin{lemma}
\label{lem:Qmconst}
For every $m\in\N$, there exists a finite collection $\mathcal Q^m$ of
$(n-1)$-dimensional boxes of sidelength
$r^m$,  such that, writing 
\[
\mathcal \Q^m=\bigcup_{i=1}^{\#\Q^m}Q_i^m\,,
\] 
the following holds:
\begin{itemize}
\item[(i)] For every $i\in\{1,\dots,\#\Q^m\}$, we have $Q_i^m\subset\partial
  \tilde U^m\times [0,1]^{n-2}$, with $\tilde U^m$ as in Definition \ref{def:UUm}.
\item[(ii)] If  $i,j\in\{1,\dots,\#\Q^m\}$, $i\neq j$, then $Q_i^m\cap
  Q_j^m=\emptyset$. If additionally $\overline
  {Q_i^m}\cap\overline{Q_j^m}=\emptyset$, then $\dist(Q_i^m,Q_j^m)\geq r^m$.
\item[(iii)] $\lim_{m\to\infty}N^{-m}r^{(n-2)m}\,\#\Q^m=1$\,.
\end{itemize}
\end{lemma}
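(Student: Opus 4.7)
My plan is to construct $\Q^m$ by focusing on a single ``side'' of $\partial \tilde U^m$ (say $i_0=1$) and tiling the resulting slab with $(n-1)$-dimensional boxes of side $r^m$, built by combining the pre-fractal line segments with a uniform sub-cube grid on $[0,1]^{n-2}$.

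First I would fix $i_0=1$ and consider, for each $(i_1,\dots,i_m)\in\{1,\dots,N\}^m$, the segment $S_{1|i_1,\dots,i_m}(L)\subset\R^2$, which by iteration of Lemma~\ref{lem:Umconst}(i) has length $r^m$. Setting $M_m:=\lfloor r^{-m}\rfloor$ and $Q_{j_1,\dots,j_{n-2}}:=\prod_{k=1}^{n-2}(j_k r^m,(j_k+1)r^m)$ for $j_1,\dots,j_{n-2}\in\{0,\dots,M_m-1\}$, I would define $\Q^m$ to consist of all products $(S_{1|i_1,\dots,i_m}(L))^\circ\times Q_{j_1,\dots,j_{n-2}}$, where $(\cdot)^\circ$ denotes the relative interior of the segment in $\R^2$. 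Then $\#\Q^m=N^m M_m^{n-2}$, so property (iii) reduces to the elementary fact $r^m M_m\to 1$, and property (i) is immediate from the inclusion $S_{1|i_1,\dots,i_m}(L)\times[0,1]^{n-2}\subset\partial\tilde U^m\times[0,1]^{n-2}$.

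The bulk of the work is in verifying (ii). Boxes within the same segment's slab but with different sub-cube indices are disjoint, and their closures either share a face or are at distance $\geq r^m$, since the sub-cubes lie on a grid of spacing $r^m$. Boxes from different segments are disjoint because any two distinct images $S_{1|i_1,\dots,i_m}(L)$ and $S_{1|i_1',\dots,i_m'}(L)$ share at most an endpoint (the curve $\partial\tilde U^m$ is simple, as a consequence of Lemma~\ref{lem:Umconst}(iv)), so their relative interiors do not meet. The distance between the closures of two such boxes factors via the Cartesian product formula as $\dist(\overline Q,\overline{Q'})^2=\dist(S_{1|i_1,\dots,i_m}(L),S_{1|i_1',\dots,i_m'}(L))^2+\dist(\overline{Q_{j_1,\dots,j_{n-2}}},\overline{Q_{j_1',\dots,j_{n-2}'}})^2$, so the critical case is when the sub-cubes touch (contributing $0$) but the underlying segments are disjoint.

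The hardest step is therefore establishing $\dist(S_{1|i_1,\dots,i_m}(L),S_{1|i_1',\dots,i_m'}(L))\geq r^m$ whenever these two segments have disjoint closures. My plan is to let $k$ be the smallest index at which the two multi-indices differ, factor out the common-prefix similarity $T:=S_{1|i_1,\dots,i_{k-1}}$ of ratio $r^{k-1}$, and reduce the problem to estimating the distance between the pieces lying in $T(S_{i_k}(\overline D))$ and $T(S_{i_k'}(\overline D))$. When $|i_k-i_k'|\geq 2$, Lemma~\ref{lem:Umconst}(iv) gives disjoint closed triangular regions, and combined with $r<\tfrac12$ from property (v) this should yield a uniform quantitative bound $\dist(S_{i_k}(\overline D),S_{i_k'}(\overline D))\geq r$; scaling by $T$ then gives the required $\geq r^k\geq r^m$. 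The delicate case is $|i_k-i_k'|=1$ (triangles sharing only a single corner): since by hypothesis the level-$m$ segments do not share endpoints, at some deeper level their descendants must avoid navigating to this shared corner, allowing the disjoint-triangle bound to be applied after a further factorization. Handling this recursion quantitatively, while tracking the accumulated scaling factor $r^{m-k}$, is the main technical obstacle of the proof.
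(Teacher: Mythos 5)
Your construction coincides with the paper's: there $\Q^m$ consists of the products $S_{i_0|i_1,\dots,i_m}(L')\times\tilde Q$ with $L'=(0,1)\times\{0\}$ and $\tilde Q$ running over the grid of $(n-2)$-cubes of side $r^m$, and (i), (iii) are handled exactly as you do (the paper lets $i_0$ range over $\{1,\dots,4\}$ but then counts as if it were fixed; your restriction to $i_0=1$ is the cleaner reading and gives the limit $1$ literally). For (ii) both arguments reduce, via the product formula for distances, to the claim that two level-$m$ segments with disjoint closures are at distance $\geq r^m$ --- and you should know that the paper simply asserts this in one sentence, so the step you flag as the ``main technical obstacle'' is not resolved there either. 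Your common-prefix factorization is the right way to close it. For $|i_k-i_k'|\geq 2$ what you need is the finite check $\min_{|i-j|>1}\dist\bigl(S_i(\overline D),S_j(\overline D)\bigr)\geq r$ on the generator (the extremal pairs are second-nearest collinear neighbours, where equality holds). For $|i_k-i_k'|=1$ the two closed level-$k$ triangles meet in exactly one point, the shared corner $c$; since the two closed segments are disjoint, at least one of them omits $c$, and if $l>k$ is the first index at which its address deviates from the corner address ($N\cdots N$, resp.\ $1\cdots 1$), that segment is at distance $\geq r^{l-1}\dist(S_{i_l}(\overline D),(1,0))\geq r^l\geq r^m$ from $c$. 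To pass from distance-to-$c$ to distance between the two segments you then need a uniform angular separation at $c$ of the sectors spanned by two adjacent generator triangles, which produces $\dist\geq c_0\,r^m$ with a geometric constant $c_0\in(0,1]$ rather than the stated sharp constant $1$. You should not chase the sharp constant: everywhere the lemma is used later (the Lipschitz bound on $v_m$ and the degree splitting via Lemma \ref{lem:sep}), only the pairwise disjointness of the balls $B_i^m$ and the constancy of $v_m$ off their union actually enter, so $c_0\,r^m$ --- or even plain disjointness --- suffices.
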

Again, the proof is postponed to Section \ref{sec:proof-lemmas-sec4}.
\begin{notation}
\label{not:EBim}
  For $m\in \N$ and $x\in\partial U^m$, let $\nu^m(x)$ denote the outward normal
  to $\partial U^m$ at $x$, if it exists. Let $\mathcal N^m\subset \partial U^m$ denote the set of points for which $\nu^m(x)$ does not exist. For $i\in\{1,\dots,\#\Q^m\}$, let $E_i^m$ denote a Euclidean motion that satisfies
\[
\begin{split}
  E_i^m((-r^m/2,r^m/2)^{n-1}\times\{0\})=&Q_i^m\,\\
  E_i^m(e_n)=&E_i^m(0)+\nu^m(E_i^m(0))\,,
\end{split}
\]
and let $B_i^m$ be the $(n-1)$-ball of radius $r^m/2$ at the center of $Q_i^m$, i.e., 
\[
B_i^m:=E_i^m(B^{n-1}(0,r^m/2)\times\{0\})\,.\]
\end{notation}

We are now ready to prove the second part of Theorem \ref{thm:convcor}.

\begin{proof}[Proof of Theorem \ref{thm:convcor} (ii)]
Let $U,U^m\subset \R^n$ be as in Definition \ref{def:UUm},  for
$m\in\N$. 
For $x\in\partial U^m$, we set
\[
v_m(x)=\begin{cases} \zeta_{r^m/2}((E_i^m)^{-1}(x)) &\text{ if } x\in B_i^m\\
-(r^m/2)^{\tilde \alpha } e_n & \text{ if } x\in \partial U^m\setminus \left(\bigcup_{i=1}^{\#\Q^m}B_i^m\right)\,,\end{cases}
\]
where we used the notation introduced in Definition \ref{def:xizetarho} and Notation \ref{not:EBim}.
We immediately see that $v_m$ is Lipschitz with
\begin{equation}
\sup_{x\in\partial U}|v_m|\leq C r^{\tilde \alpha m}\,.\label{eq:2}
\end{equation}
Now let $\alpha<\alpha'<\tilde \alpha$. By \eqref{eq:2}, we have
\begin{equation}
  \label{eq:5}
  \sup_{x\in\partial U}|v_m|\leq \e_m r^{\alpha' m}
\end{equation}
with $\e_m:=C r^{m(\tilde\alpha-\alpha')}$. Next, 
for $|x-y|>r^m$, we have
\begin{equation}
|v_m(x)-v_m(y)|\leq 2\e_m r^{\alpha' m}\leq  2\e_m|x-y|^{\alpha'}\,.\label{eq:3}
\end{equation}
From Remark \ref{rem:xizetarho} (i), we have $\Lip(v_m)\leq Cr^{m(\tilde
  \alpha-1)}$. Hence, for $|x-y|\leq r^m$, we have
\begin{equation}
|v_m(x)-v_m(y)|\leq C|x-y|\e_m r^{m(\alpha'-1)}\leq C\e_m|x-y|^{\alpha'}\,.\label{eq:4}
\end{equation}
By \eqref{eq:5}, \eqref{eq:3} and \eqref{eq:4}, we have
\begin{equation}
\|v_m\|_{C^{0,\alpha'}(\partial U^m)}<C\e_m\to 0\quad \text{ as }m\to\infty\,.\label{eq:6}
\end{equation}

We come to the computation of $\degd(v_m,\partial U^m,\cdot)$. 
To do so, we introduce some additional notation. For $m\in \N$, We  set 
 \[
 B^{*,m}=B(0,(r^m/2)^{\tilde \alpha })
 \]
 and for $i=1,\dots,\#\Q^m$, we define $\zeta_i^m:\partial U^m\to\R^n$ by
\[
\zeta^m_i(x)=\begin{cases} \zeta_{r^m/2}((E_i^m)^{-1}(x)) &\text{ if } x\in B_i^m\\
-(r^m/2)^{\tilde \alpha } e_n &\text{ else.}\end{cases}
\]
We note that by Remark \ref{rem:xizetarho} (ii), we have
\[
\begin{split}
\degd(\zeta_i^m,\partial U^m,x)\eq&\begin{cases}1 & \text{ if } x\in B^{*,m}\\
0 &\text{ else}\end{cases}
\\
=& \chi_{B^{*,m}}\,.
\end{split}
\]
By repeated application of Lemma 
\ref{lem:sep}, with $V:=B_i^m$ for $i\in\{1,\dots,\#\Q^m\}$,
\[
\begin{split}
  \deg(v_m,U^m,\cdot)\eq &\sum_{i=1}^{\# \Q^m}\degd(\zeta_i^m,\partial U^m,\cdot)\\
  \eq &\# \Q^m \,\chi_{B^*}\,,
\end{split}
\]
Hence, 
\[
\|\deg(v_m,U^m,\cdot)\|_{L^p}^p=\left(\# \Q^m\right)^p\omega_n
(r^m/2)^{n\tilde \alpha}\,.
\]
Using  Lemma \ref{lem:Qmconst} (iii) and the relation between $r$ and $N$ from Lemma \ref{lem:Umconst} (iii), we have
\[
\begin{split}
  \lim_{m\to\infty}\|\deg(v_m,U^m,\cdot)\|_{L^p}=&\lim_{m\to\infty}N^m r^{-(n-2)m}
  \omega_n^{1/p}
  (r^m/2)^{n\tilde \alpha/p}\\
  \geq &\lim_{m\to\infty}C(n,p,\tilde \alpha) r^{m(-d+\tilde \alpha n/p)}\\
=&+\infty\,.
\end{split}
\]
We define $ u_m:\partial U\to \R^n$ by
\[
 u_m(x)=v_m(P^m(x))\,,
\]
and extend $ u_m$ from $\partial U$ to $\overline U$ by Theorem \ref{thm:whitney}, such that
\[
\| u_m\|_{C^{0, \alpha'}(U)} <C\|v_m\circ P_m\|_{C^{0,\alpha'}(\partial
  U^m)}\leq C \e_m\to 0\,.
\]
By the compact embedding $C^{0, \alpha'}( U)\to
C^{0,\alpha}( U)$,
\[
u_m\to 0\quad \text{ in } C^{0,\alpha}( U)\,.
\]
Furthermore, note that
\[
\degd(u_m,\partial U,\cdot)\eq \degd(v_m,\partial U^m,\cdot)\quad \text{ for all }m\in\N
\]
and hence $\|\degd(u_m,\partial U,\cdot)\|_{L^p}\to\infty$. This proves the theorem.
\end{proof}

\section{Proof of Lemmas used in Section \ref{sec:proof-thm2}}
\label{sec:proof-lemmas-sec4}

\begin{proof}[Proof of Lemma \ref{lem:Umconst}]
We first construct an auxiliary 
sequence of points in $\R^2$, depending on two parameters $\beta\in(0,\pi/2]$
and $M\in \N\setminus\{0\}$. From this sequence of points, we will construct a generator for a
self-similar fractal later in the proof.\\
\newcommand{\eu}{e_{\mathrm{up}}^\beta}
\newcommand{\ed}{e_{\mathrm{down}}^\beta}
\newcommand{\er}{e_{\mathrm{right}}^\beta}
First, let 
\[
\begin{split}
  \eu:=&(\sin\beta,\cos\beta)\\
  \ed:=&(\sin\beta,-\cos\beta)\\
  \er:=&(1,0)\,.
\end{split}
\]
For $m\in\N$, $m>0$, we define $\lambda_{m,\beta}:\{1,\dots,4m\}\to\R^2$ by
  \begin{equation}
    \lambda_{m,\beta}(l)=\begin{cases}\er &\text{ if }l\in\{1,2m+1\}\\
\eu & \text{ if }1<l<2m+1\\
\ed & \text{ if }2m+1<l\leq 4m\end{cases}\,.
  \end{equation}
The curve one obtains by connecting successively $\sum_{i=1}^l\lambda_{m,\beta}(i)$ for
$l=0,\dots,4m$ is depicted in Figure \ref{fig:lambda}. Note that this curve has
length $4m$. Our next aim is to
concatenate several curves as in Figure \ref{fig:lambda}, letting $m$ increase from 1 to some maximal
value $M$, and then let it decrease to 1 again.
\begin{figure}[h]
\begin{center}
\includegraphics[scale=.7]{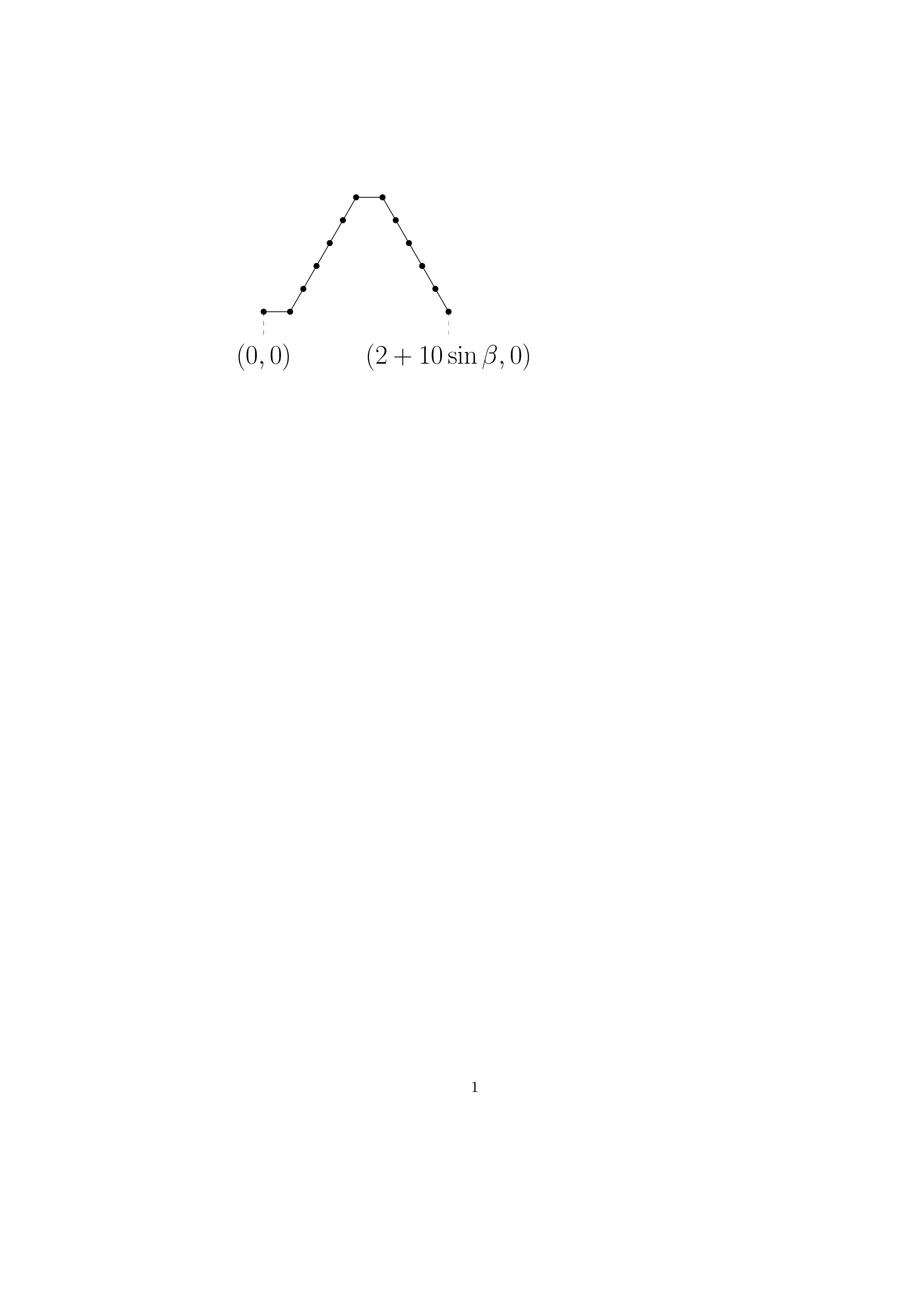}
\caption{The curve constructed from $l\mapsto\sum_{i=1}^l\lambda_{m,\beta}(i)$
  for $m=3$.\label{fig:lambda}}
\end{center}
\end{figure}
For $M\in\N$, $M>0$, we set 
\[
\begin{split}
  a^{(M)}=&(1,2,\dots,M-1,M,M-1,\dots,2,1)\\
  b^{(M)}_l=&4\sum_{j=1}^{l} a_j^{(M)}\quad\text{ for }l=0,\dots,2M-1.
\end{split}
\]
Then for $i=1,\dots,4M^2$, there exists a unique $l_i\in\{1,\dots,2M-1\}$ such
that
\[
b_{l_i-1}^{(M)}< i\leq b_{l_i}^{(M)}\,.
\]
We set
\[
\kappa_{M,\beta}(i):=\lambda_{l_i,\beta}(i-b_{l_i}^{(M)})\,.
\]
Furthermore, we set $\kappa_{M,\beta}(4M^2+1)=\er$, and
for $j=0,\dots,4M^2+1$, we set
\[
\Kappa_{M,\beta}(j):=\sum_{i=1}^j\kappa_{M,\beta}(i)\,.
\]
The curve one obtains by connecting successively $\Kappa_{M,\beta}(j)$ for
$j=0,\dots,4M^2+1$ is depicted in Figure \ref{fig:zeta}. 
\begin{figure}[h]
\begin{center}
\includegraphics[scale=.7]{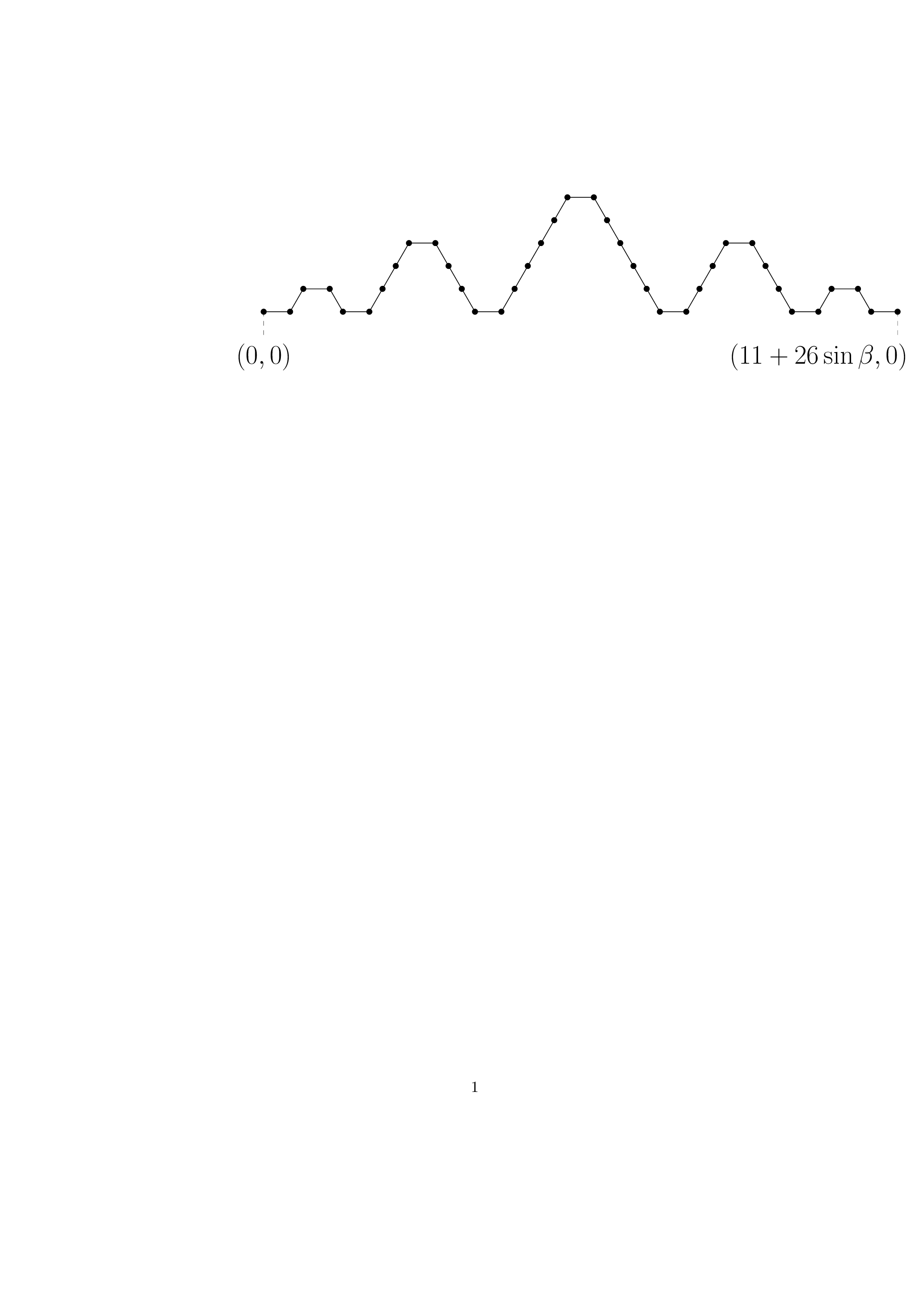}
\caption{The curve constructed from $\Kappa_{3,\beta}$.\label{fig:zeta}}
\end{center}
\end{figure}
We may compute
\begin{equation}
  \begin{split}
    e_1\cdot \Kappa_{M,\beta}(4M^2+1)=&
\sum_{m=1}^M \sum_{i=1}^{4m}e_1\cdot\lambda_{m,\beta}(i)
+\sum_{m=M+1}^{2M-1} \sum_{i=1}^{4(2M-m)}e_1\cdot\lambda_{m,\beta}(i)\\
=&
     2\left(\sum_{m=1}^{M-1}((4m-2)\sin\beta +2)\right)+(4M-2)\sin\beta+3\\
    =& (4M(M-1)+2)\sin\beta+4M-1\,.\label{eq:235}
  \end{split}
\end{equation}

Now set

\begin{equation}
\begin{split}
  \hat N(M):=&4M^2+1\\
  \hat r(M,\beta):=&\left(e_1\cdot \Kappa_{M,\beta}(4M^2+1)\right)^{-1}\\
  =& \left((4M(M-1)+2)\sin\beta+4M-1\right)^{-1}\\
  \hat d(m,\beta):=&-\frac{\log \hat N(M)}{\log\hat r(M,\beta)}\,.
\end{split}\label{eq:46}
\end{equation}

We claim that it is possible to choose $M_0\in\N$ such that 
\begin{equation}
  \begin{split}
    \bar d<&\hat d(M_0,0)\,,\\\label{eq:41}
    (4M_0-1)^{-1}<&\frac12\,,\\
    2(4M_0-1)^{\alpha-1}\leq &1\,.
  \end{split}
\end{equation}
Indeed,
we have $\hat r(M,0)=(4M-1)^{-1}$ and  hence

\begin{equation}
\begin{split}
  \hat d(M,0)=\frac{\log
      (4M^2+1)}{\log (4M-1)}\,.
\end{split}\label{eq:42}
\end{equation}
In particular, note that 
\[
\hat d(M,0)\to 2 \text{ as } M\to \infty\,.
\]
This proves that we may choose $M_0$ such that the first inequality  in
\eqref{eq:41} is fulfilled. After possibly increasing $M_0$, the second and
third hold true too.\\
Since $\hat r(M_0,\cdot)$ is continuous monotone decreasing on $[0,\pi/2]$, the
function $\hat d(M_0,\cdot)$ is continuous monotone decreasing on $[0,\pi/2]$
too. Additionally, we have   $\hat d(M_0,\pi/2)=1$.
Hence, there exists $\beta_0\in (0,\pi/2)$ such that
\[
\bar d=\hat d(M_0,\beta_0)\,.
\]
Now set 
\[
r:=\hat r(M_0,\beta_0)\,,
\quad N:=\hat N(M_0).
\]
We use the following notation: To two points $x\neq y\in \R^2$, we associate the (unique) orientation
preserving Euclidean motion
$S_{x,y}:\R^2\to \R^2$ that maps $(0,0)$ to $x$ and $(1,0)$ to $y$.\\
For $i=0,\dots,N$ write $p(i):=r\Kappa_{M_0,\beta_0}(i)$, and set
\[
S_i:=S_{p(i-1),p(i)}\,.
\]
It remains to verify that $r,N$ and $\mathcal S=\{S_1,\dots,S_N\}$ satisfy the
required properties.\\
Property (i) follows from 
$p(i)-p(i-1)\in
\{re_{\mathrm{right}}^{\beta_0},re_{\mathrm{up}}^{\beta_0},re_{\mathrm{down}}^{\beta_0}\}$
for $i=1,\dots,N$ , and
$|e_{\mathrm{right}}^{\beta_0}|=|e_{\mathrm{up}}^{\beta_0}|=|e_{\mathrm{down}}^{\beta_0}|=1$. Property
(ii) simply follows from $S_1(0,0)=0$, $S_i(1,0)=S_{i+1}(0,0)$ for
$i=1,\dots,N-1$ and $S_N(1,0)=(1,0)$. Property (iii) follows from the definition
of $\hat d(M,\beta)$ in the last line of \eqref{eq:46}, $\bar d=\hat
d(M_0,\beta_0)$, $N=\hat N(M_0)$ and $r=\hat r(M_0,\beta_0)$. Property (iv) is
obvious from an inspection of Figure \ref{fig:fracemp}, where the union of all
$S_i(D)$, $i=1,\dots,N$ is depicted.
\begin{figure}[h]
\begin{center}
\includegraphics[scale=.7]{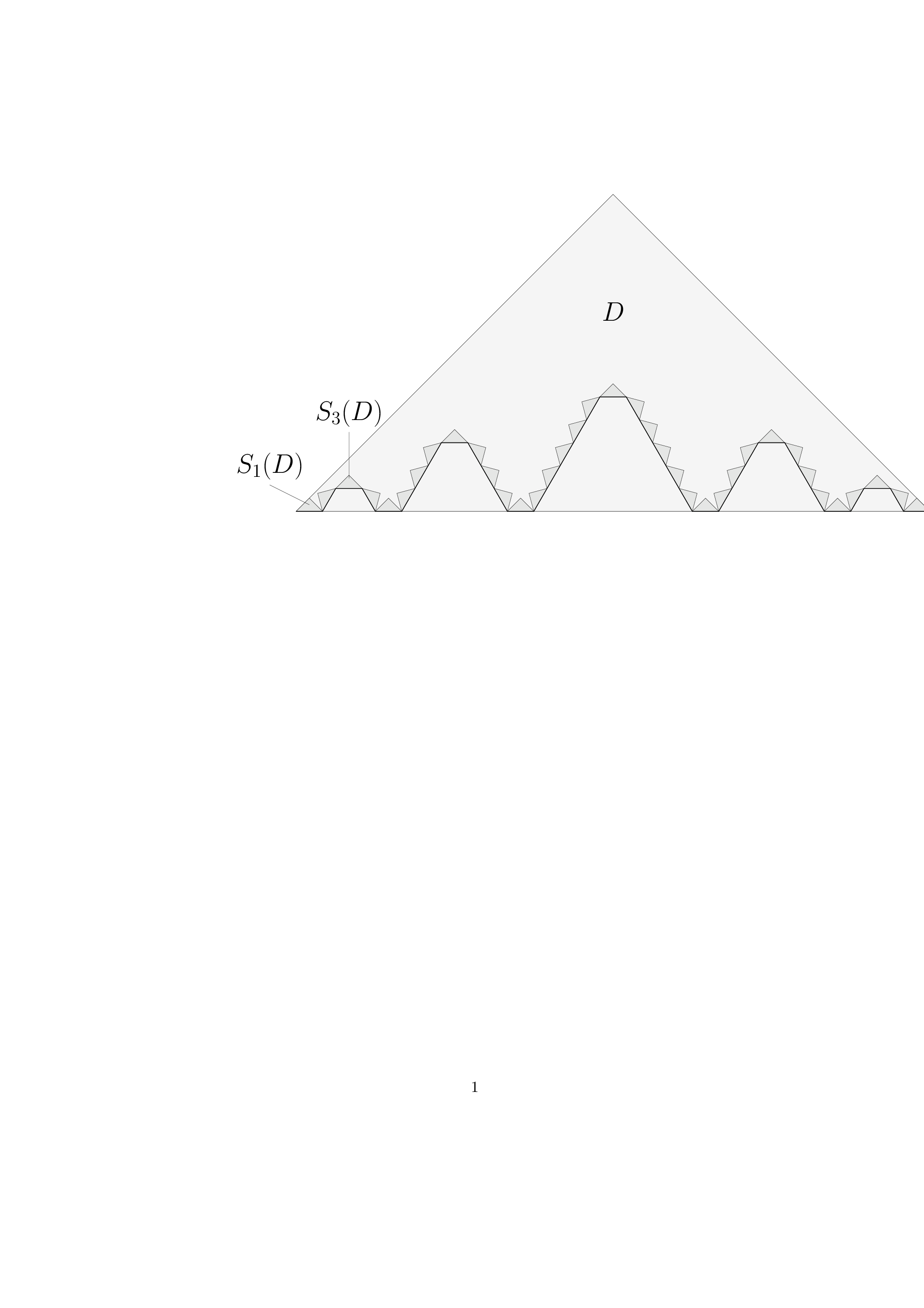}
\caption{$S_i(D)\subset D$ for $i=1,\dots,N$, $S_i(D)\cap S_j(D)=\emptyset$ for $i\neq j$, and $S_i(D)\cap
  S_j(D)=\emptyset$ for $|i- j|>1$. In the picture above, $N=37$.\label{fig:fracemp}}
\end{center}
\end{figure}
Property (v) follows from the second and third line in \eqref{eq:41}, and
$r=\hat r(M_0,\beta_0)\leq (4M_0-1)^{-1}$.
This concludes the proof of the lemma.
\end{proof}

\begin{proof}[Proof of Lemma \ref{lem:Pconst}]
We are going to assume that $n=2$ and construct the maps $P_{m+1}^m$ and $P^m$ for this case only. The  general case follows easily by setting
\[
\begin{split}
  P_{m+1}^m(x)=&( P_{m+1}^{m,(2)}(x_1,x_2),x_3,\dots)\,,\\
  P^m(x)=&( P^{m,(2)}(x_1,x_2),x_3,\dots)\,,
\end{split}
\]
where $P_{m+1}^{m,(2)}$, $ P^{m,(2)}$ denote the maps constructed for $n=2$ below.\\
Let  $A\subset \overline{D}$ be the bounded
closed simply connected set whose boundary contains the union of the curves $L$ and
$S(L)$, see Figure \ref{fig:Afig}.
\begin{figure}[h]
\begin{center}
\includegraphics[scale=.7]{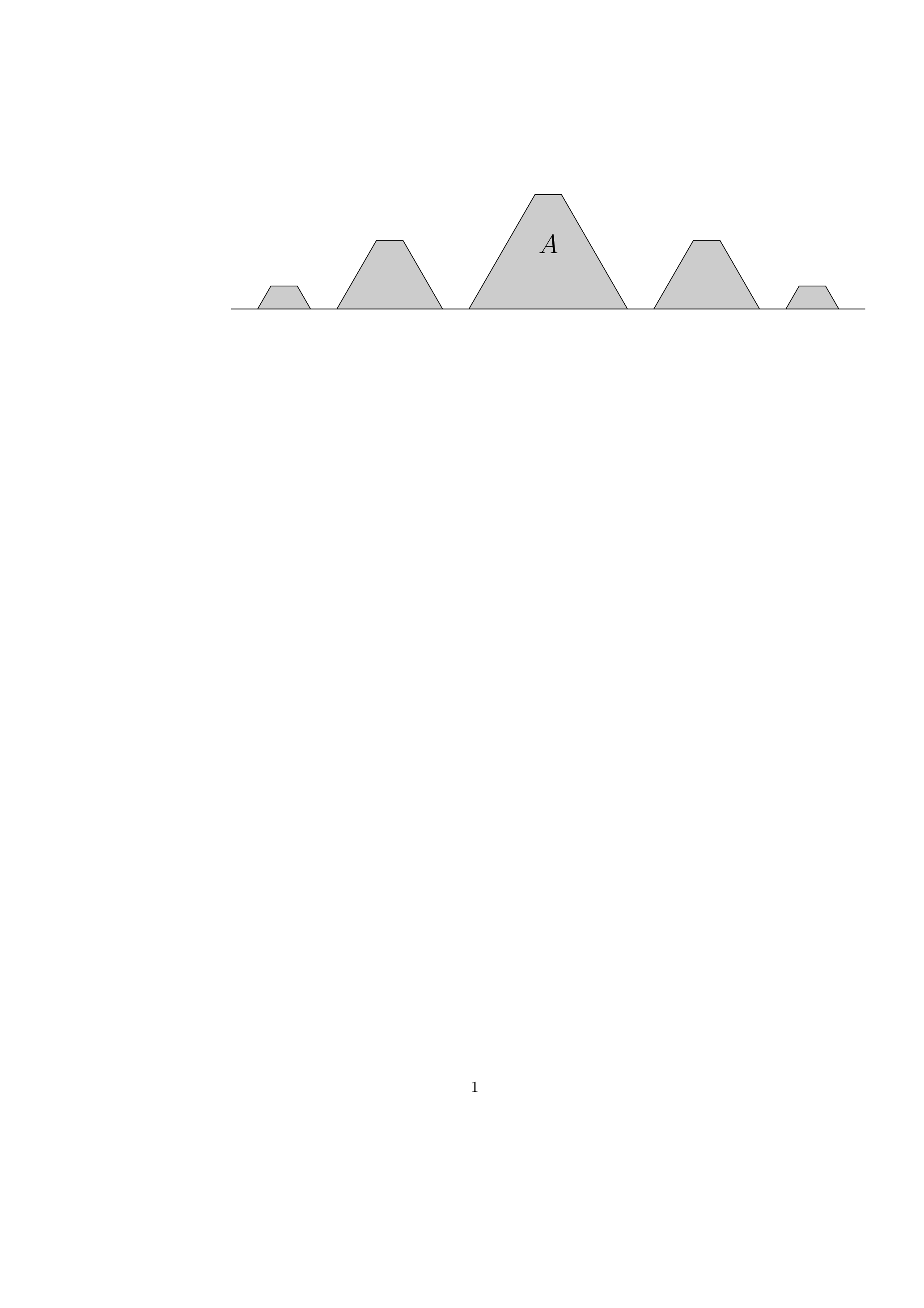}
\caption{The closed set $A$, whose boundary contains $L$ and $S(L)$ (with $N=37$). \label{fig:Afig}}
\end{center}
\end{figure}
I.e., the set $A$ satisfies 
\[
\overline {U^1}\setminus U^0=\bigcup_{i=1}^4 S_{i}^* A\,.
\]
For $x=(x_1,x_2)\in A$, let $P$ be the projection $A\to L$, $x\mapsto
(x_1,0)$. Obviously, $P$ is Lipschitz with Lipschitz constant $\leq 1$.\\
For $m\in \N$, $y\in \overline U^{m+1}\setminus U^m$, there exist $i_0\in\{1,\dots,4\}$,
$i_1,\dots,i_m\in \{1,\dots,N\}$, and $x\in A$ such that
\[
y=S_{i_0|i_1,\dots,i_m}x\,.
\]
We set
\[
P_{m+1}^m(y)=S_{i_0|i_1,\dots,i_m}P(x)\,.
\]
We claim that 
\begin{equation}
  \begin{split}
    P_{m+1}^m:&\overline{U^{m+1}}\setminus U^m\to \partial U^m \quad\text{ is
      well defined,}\\
\label{eq:49} \Lip P_{m+1}^m\leq &1\,.
  \end{split}
\end{equation}
Indeed,  if there exist $i_0,i_0'\in\{1,\dots,4\}$, $i_1,\dots,i_m,i_1',\dots,i_m'\in
\{1,\dots,N\}$, and $x,x'\in A$ with 
$y=S_{i_0|i_1,\dots,i_m}x=S_{i_0'|i_1',\dots,i_m'}x'$, then either $i_j=i_j'$
for $j=0,\dots,m$ and $x=x'$ or $y\in\partial U^m$ and $x,x'\in L$,  in
which case $P(x)=x$, $P(x')=x'$ and hence
$S_{i_0|i_1,\dots,i_m}P(x)=S_{i_0'|i_1',\dots,i_m'}P(x')$. This proves the first
part of \eqref{eq:49}, the second part follows from $\Lip P\leq 1$.
For $l>m\geq 0$, let $ P_l^m:\overline{  U^l}\to\partial  U^m$ be defined by
\[
 P_l^m= P_{m+1}^m\circ\dots\circ  P^{l-1}_l\,.
\]
It is easily seen from this definition and \eqref{eq:49} that $\Lip P_l^m\leq 1$.\\
We come to the definition of $ P^m:\overline { U}\to \overline {
  U^m}$ for $m\in\N$. We set $A^m:=\overline{ U^{m+1}}\setminus U^m$ and note
\[
 U\subset   U^m\cup
\left(\bigcup_{k=m}^\infty   A^k\right)\,.
\]
For $k> m$ and $x\in A^k$, we let $P^m(x)=P_k^m(x)$. Note that this makes $P^m|_U$ well defined with 
\begin{equation}
\Lip P^m|_U\leq 1\label{eq:45}\,,
\end{equation}
since for $k\neq k'$ with $x\in A^k\cap A^{k'}$, we have $P_{k'}^m(x)=P_k^m(x)$. \\
Now let $x\in \partial  U$. There exist $i_0\in\{1,\dots,4\}$ and  a
(possibly non-unique) sequence
$i_k\in\{1,\dots,N\}$, $k=1,2,\dots$, 
such that $x\in S_{i_0|i_1,\dots,i_k}(\overline D)$ for every $k\in \N$
(cf.~\cite{MR3236784}, Chapter 9).
Note that for $k'>k>m$,
\[
 P_{k'}^m\left(S_{i_0|i_1,\dots,i_{k'}}(\overline D)\right)\subset
 P_k^m\left(S_{i_0|i_1,\dots,i_k}(\overline D)\right)\,.
\]
Thus there exists a unique $x'\in S^*_{i_0}L$ (that does not depend on the
choice of the sequence $i_k$) such that
\[
x'\in \bigcap_{k=m+1}^\infty  P_k^m(S_{i_0|i_1,\dots,i_k}(\overline D))\,.
\]
We set $ P^m(x)=x'$. \\
It remains to show that $P^m$ is Lipschitz on $\overline U$ with $\Lip P^m\leq 1$. By \eqref{eq:45}, it is sufficient to show continuity on $\overline U$. Assume we are given $x_j$, $j\in\N$, with $x_j\in U$, $x_j\to x\in \partial U$. We need to show $P^m(x_j)\to P^m(x)$. Indeed,
we will show that for every subsequence, there exists a further subsequence such that convergence holds.
For any subsequence, there has to exist a further subsequence $x_j$ (no relabeling), a sequence $i_1,i_2,\dots\in \{1,\dots, N\}$, and a monotonous increasing function $K:\N\to\N$ with $\lim_{j\to\infty}K(j)=\infty$ such that
\begin{equation}
\label{eq:43}
x_j\in S_{i_0|i_1,i_2,\dots,i_{K(j)}}(\overline D)\,.
\end{equation}
In this case, we must have 
\[
x\in \bigcap_{k=1}^\infty S_{i_0|i_1,i_2,\dots,i_{k}}(\overline D)\,,
\]
and hence 
\begin{equation}
\label{eq:44}
P^m(x)\in \bigcap_{k=m+1}^\infty P^m_k\left( S_{i_0|i_1,i_2,\dots,i_{k}}(\overline D)\right)\,.
\end{equation}
By \eqref{eq:43} and \eqref{eq:44}, we get 
$P^m(x_j)\to P^m(x)$. This proves  the lemma.
\end{proof}


\begin{proof}[Proof of Lemma \ref{lem:xirhodef}]
\newcommand{\xu}{\hat{\underline{x}}}
For $x=(x_1,x_2,\dots,x_{n-1},0)\in B$,  we will identify $x$ with
$(x_1,\dots,x_{n-1})$, and we write $\hat x=x/|x|$. 
We define $\tilde \zeta$ by
\[
\tilde
\zeta(x)=\left(\hat x \sin \pi|x|,\cos \pi|x|\right)\,.
  \]
For $x\in B$, we compute
\begin{equation}
\begin{split}
  D\tilde\zeta(x)=&\left(e_1\otimes e_1+\dots+ e_{n-1}\otimes
    e_{n-1}\right)\frac{\sin \pi|x|}{|x|}+\hat x\otimes\hat x\left(\pi\cos\pi|x|-\frac{\sin\pi|x|}{|x|}\right)\\
  &-\pi\sin \pi|x|\, e_n\otimes \hat x\,.\
\end{split}\label{eq:8}
\end{equation}
From this formula, we see that $\tilde \zeta$ is indeed Lipschitz. All other
properties claimed in  (i) are verified easily.\\ 
For the proof of (ii), first note
that $\zeta^{(W)}:\partial W\to \Sb^{n-1}$ is a well defined Lipschitz map. Furthermore,
$\# (\zeta^{(W)})^{-1}(y)=1$ for all $y\in\Sb^{n-1}\setminus \{-e_n\}$. This
implies that there exists $k\in\{-1,+1\}$ such that
\begin{equation}
\degd(\zeta^{(W)},\partial W,y)=\begin{cases}k&\text{ for all }y\in B(0,1)\\
0&\text{ for all }y\in \R^n\setminus \overline{B(0,1)}\,.\end{cases}\label{eq:7}
\end{equation}
Next, we construct\footnote{The construction of $\lambda$ is not 
  necessary if  we use the relation \eqref{eq:57} -- we do not do so here for the sake of clarity.}
a Lipschitz map $\lambda:\overline{W}\to\R^n$ such that
\[
\lambda=\zeta\quad\text{ on }\partial W,\quad \partial_n \lambda(x)=
\zeta^{(W)}(x)\quad\text{ for all } x\in Q\,.
\]
Such a $\lambda$ exists by (a suitable version of) the Whitney Extension Theorem (see, e.g., Theorem
3.6.2 in \cite{MR1014685}). We are going to compute explicitly the sign of 
\[
\int_{\R^n}\degd(\zeta^{(W)},\partial
W,\cdot)\d\L^n=\int_{\R^n}\deg(\lambda,W,\cdot)\d\L^n\,,
\]
to decide which value for $k$ holds true in \eqref{eq:7}.
In order to do so, we  introduce the following  piece of notation. Let
\newcommand{\ve}{\varepsilon}
\[
\ve_{i_1\dots i_n}=\begin{cases}0 &\text{ if
  }\{i_1,\dots, i_n\}\neq\{1,\dots,n\}\\
\sgn\left((1,\dots,n)\mapsto (i_1,\dots,i_n)\right)& \text{ else.}\end{cases}
\]
In the second line on the right hand side above,
$\mathrm{sgn}\left((1,\dots,n)\mapsto (i_1,\dots,i_n)\right)$ denotes the
signature of the permutation $(1,\dots,n)\mapsto (i_1,\dots,i_n)$. With this
notation, we have for $x\in \overline W$,
\[
\begin{split}
  \det D\lambda(x)=&
  \sum_{i_1,\dots ,i_n=1}^n\ve_{i_1\dots i_n}(\partial_{x_{i_1}}\lambda_{1})\dots(\partial_{x_{i_n}}\lambda_{n})\\
  =&\sum_{i_1,\dots ,i_n=1}^n\partial_{x_{i_n}}\left(\ve_{i_1\dots i_n}(\partial_{x_{i_1}}\lambda_{1})\dots(\partial_{x_{i_{n-1}}}\lambda_{n-1})\lambda_{n}\right)\\
=:& \sum_{i_n=1}^n\partial_{x_{i_n}}f_{i_n}\\
=&\,\div f\,.
\end{split}
\]
Using the formula \eqref{eq:51} for the computation of the degree, and  the Gauss-Green Theorem,  we get
\[
\begin{split}
  \int_{\R^n}\deg(\lambda,W,\cdot)\d\L^n=&\int_{W}\det D\lambda(x)\d x\\
  =&\int_{\partial W} \nu_W\cdot f \d \H^{n-1}\\
  =& \int_{B} e_n\cdot f \d \H^{n-1}\,,
\end{split}
\]
where we denoted the outward normal to
$W$ by $\nu_W$, and used the fact that $f$ vanishes $\H^{n-1}$ almost everywhere
on $\partial W\setminus B$. For $x\in B$, we have
\[
\begin{split}
  f_n (x)=&
  \sum_{i_1,\dots,i_{n-1}=1}^n\ve_{i_1\dots i_{n-1}n}\lambda_{1,i_1}(x)\dots\lambda_{n-1,i_{n-1}}(x)\lambda_{n}(x)\\
  =& \cos \pi|x| \det D(x\mapsto\hat x\sin \pi|x|)\\
=&\pi \cos^2 \pi|x|\left(\frac{\sin \pi|x|}{|x|}\right)^{n-2}\,,
\end{split}
\]
where the value of $(\sin \pi|x|)/|x|$ at 0 is understood to be $\pi$. It follows that
\[
\int_{\R^n}\degd(\zeta^{(W)},\partial
W,\cdot)\d\L^n>0\,,
\]
and hence it follows from \eqref{eq:7} that\footnote{Of course, instead of
  arguing that $k\in\{-1,+1\}$ in \eqref{eq:7} as we did above that equation,
  the value of $k$  could also have been deduced by explicit calculation of $\int_{B} e_n\cdot f \d \H^{n-1}$.}
\[
\degd(\zeta^{(W)},\partial
W,\cdot)\eq \chi_{B(0,1)}\,.
\]
This proves the lemma.
\end{proof}

\begin{proof}[Proof of Lemma \ref{lem:Qmconst}]
For $m\in\N$, choose $k_m\in\N$ such that
\[
k_m^{-1}\geq r^m\geq (k_m+1)^{-1}\,.
\]
This choice implies 
\begin{equation}
  k_mr^{m}\to 1\quad\text{ as }m\to\infty\,.
\label{eq:19}
\end{equation}
Further, let $\tilde{ \mathcal Q}^m$ denote the set of cubes in $\R^{n-2}$ of side length $r^m$
and vertices in $\left([0,1]\cap (\N r^m)\right)^{n-2}$, 
\[
\begin{split}
  \tilde{ \mathcal Q}^m:=&\{(j_1 r^m,(j_1+1)r^m)\times \dots\times (j_{n-2}
  r^m,(j_{n-2}+1)r^m):\\
  &0\leq j_l\leq k_m-1 \text{ for }l=1,\dots,n-2\}
\end{split}
\]
Now let $L'=(0,1)\times \{0\}$, recall the definition of the similarities
$S_{i_0|i_1,\dots,i_m}$ from Section \ref{sec:self-simil-fract},
and set
\[
\begin{split}
  \mathcal Q^m:=\big\{& S_{i_0|i_1,\dots,i_m}(L')\times \tilde Q:\,i_0\in\{1,\dots,4\},\\
  &i_1,\dots,i_m\in\{1,\dots,N^m\}\,,\tilde Q\in \tilde{ \mathcal Q}^m\}\,.
\end{split}
\]
Property (i) follows directly from the definition of $\partial U^m$ (see
Definition \ref{def:UUm}) and $\mathcal
Q^m$. The first part of property (ii) is also obvious; we show the  second
part. Assume $Q_1,Q_2\in\mathcal Q^m$, $Q_1=S_{i_0|i_1,\dots,i_m}(L')\times \tilde
Q_1$, $Q_2=S_{j_0|j_1,\dots,j_m}(L')\times \tilde Q_2$, with
$i_0,j_0\in\{1,\dots,4\}$, $i_1,\dots,i_m,j_1,\dots,j_m\in\{1,\dots,N\}$, and $\tilde
Q_1,\tilde Q_2\in \tilde {\mathcal Q}^m$. 
We must have either 
 $\overline{\tilde Q_1}\cap  \overline {\tilde Q_2}=\emptyset$
or 
$S_{i_0|i_1,\dots,i_m}(L)\cap S_{j_0|j_1,\dots,j_m}(L)=\emptyset$.
In the first case, $\dist(\tilde Q_1,\tilde Q_2)\geq r^m$ and hence also
$\dist(Q_1,Q_2)\geq r^m$. In the second case,
$\dist(S_{i_0|i_1,\dots,i_m}(L), S_{j_0|j_1,\dots,j_m}(L))\geq r^m$ and hence also
$\dist(Q_1,Q_2)\geq r^m$. \\
 Next,  note that by construction, 
$\#\mathcal Q^m=(\# \tilde \Q^m) N^m=(k_m)^{(n-2)}N^m $ and hence property (iii) follows from  \eqref{eq:19}.\\
\end{proof}



\appendix

\section{Properties of H\"older functions}
The following lemma is based on the construction from the well known Whitney
extension Theorem. The proof is mainly a repetition of the proof of Theorem 2.1
in \cite{MR1119189}. However, we could not
find a full proof of the claim we need in the literature, which is why we give
it here.
\begin{lemma}
\label{thm:whitney}
Let $K\subset\R^n$ be compact, $0<\alpha<1$, and $f\in C^{0,\alpha}(K)$. 
Then there exists $\tilde f:\R^n\to \R$ such that
\[
\|\tilde f\|_{C^{0,\alpha}(\R^n)}\leq C(n,\alpha) \|f\|_{C^{0,\alpha}(K)}\,,
\]
and $\tilde f|_K=f$.
\end{lemma}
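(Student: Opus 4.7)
The plan is to use the McShane--Whitney extension formula, which in the Hölder setting (as opposed to the $C^k$-case) is much cleaner than the partition-of-unity construction referenced in the paper. Set $L := [f]_{C^{0,\alpha}(K)}$ and define
\[
\tilde f_0(x) := \inf_{y \in K} \bigl( f(y) + L\, |x - y|^\alpha \bigr), \qquad x \in \R^n.
\]
First I would verify that $\tilde f_0|_K = f$: taking $y = x$ in the infimum gives $\tilde f_0(x) \leq f(x)$ for $x \in K$, while the Hölder bound on $f$ gives $f(y) + L |x-y|^\alpha \geq f(x)$ for every $y \in K$, which furnishes the reverse inequality.

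Next I would check that $[\tilde f_0]_{C^{0,\alpha}(\R^n)} \leq L$. The key ingredient is the subadditivity $(a+b)^\alpha \leq a^\alpha + b^\alpha$ for $a, b \geq 0$ and $0 < \alpha \leq 1$, which combined with the triangle inequality yields
\[
|x_2 - y|^\alpha \leq |x_1 - y|^\alpha + |x_1 - x_2|^\alpha \quad \text{for all } x_1, x_2 \in \R^n,\ y \in K.
\]
Given $x_1, x_2 \in \R^n$ and $\e > 0$, pick $y_\e \in K$ nearly realizing the infimum defining $\tilde f_0(x_1)$; then
\[
\tilde f_0(x_2) - \tilde f_0(x_1) \leq L \bigl( |x_2 - y_\e|^\alpha - |x_1 - y_\e|^\alpha \bigr) + \e \leq L\, |x_1 - x_2|^\alpha + \e.
\]
Sending $\e \to 0$ and swapping the roles of $x_1, x_2$ gives the seminorm bound.

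The final step is to control the sup norm by truncation, since $\tilde f_0$ may grow at infinity. Setting $M := \sup_K |f|$ and
\[
\tilde f(x) := \max\bigl( -M, \min(M, \tilde f_0(x))\bigr),
\]
clipping to $[-M,M]$ does not increase the Hölder seminorm (it is $1$-Lipschitz as a map $\R \to \R$), and satisfies $\|\tilde f\|_{L^\infty(\R^n)} \leq M$; since $|f| \leq M$ on $K$ we still have $\tilde f|_K = f$. Altogether $\|\tilde f\|_{C^{0,\alpha}(\R^n)} \leq M + L = \|f\|_{C^{0,\alpha}(K)}$, so in fact $C(n,\alpha) = 1$ suffices. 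The only genuinely delicate point is the subadditivity inequality for $t \mapsto t^\alpha$, which is a standard one-line estimate, so I do not anticipate any real obstacle.
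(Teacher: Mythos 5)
Your proof is correct, and it takes a genuinely different route from the one in the paper. The paper follows the classical Whitney construction: it takes the Whitney decomposition of $\R^n\setminus K$, builds a partition of unity $\{\varphi_i\}$ subordinate to the dilated cubes, sets $\tilde f=\sum_i\varphi_i f(p_i)$ with $p_i$ a nearest point of $K$ to $Q_i$, proves the pointwise gradient bound $|D\tilde f(x)|\leq C(n)[f]_\alpha\dist(x,K)^{\alpha-1}$, and then recovers the H\"older seminorm by integrating along segments, distinguishing the cases $|x-x'|+|y-y'|\lessgtr 4|x-y|$. Your McShane--Whitney formula $\tilde f_0(x)=\inf_{y\in K}(f(y)+L|x-y|^\alpha)$ short-circuits all of this: the essential observation is that $(x,y)\mapsto|x-y|^\alpha$ is itself a metric for $0<\alpha\leq 1$ (by subadditivity of $t\mapsto t^\alpha$), so the infimal convolution is $1$-Lipschitz for that metric, and your $\e$-argument for the seminorm bound is complete and correct. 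The infimum is finite since $f$ is bounded on the compact set $K$ and the penalty term is nonnegative, the identity $\tilde f_0|_K=f$ is verified exactly as you say, and truncation to $[-M,M]$ is a $1$-Lipschitz post-composition, so it preserves both the seminorm bound and the values on $K$. What your approach buys is a sharper, dimension-free constant ($C=1$ with the paper's norm $\|f\|_{C^{0,\alpha}}=\sup|f|+[f]_\alpha$) and a much shorter argument; its limitation is that it is intrinsically scalar and tied to moduli of continuity that are concave, so it does not generalize to $C^{k,\alpha}$ extensions the way the Whitney machinery does. For this paper that limitation is harmless: the lemma is stated for scalar $f$, and the one place it is used (extending the $\R^n$-valued maps $u_m$ in the proof of Theorem \ref{thm:convcor}(ii)) can be handled componentwise at the cost of a factor depending only on $n$, consistent with the constant $C(n,\alpha)$ in the statement.
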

\begin{proof}
Let $W$ be the Whitney decomposition of $\R^n\setminus K$, see Lemma
\ref{lem:whitneydecomp}. 
For $Q_i\in W$, denote the sidelength of $Q_i$ by $|Q_i|$, and its center by $x_i$.
For each $i\in\N$, fix $p_i\in K$ such that
$\dist(Q_i,p_i)=\dist(Q_i,K)$,   and
let $\tilde Q_i$ be the cube with  center  $x_i$ and $|\tilde
Q_i|=\frac32 |Q_i|$. Fix $\eta\in C^\infty_0(\R^n)$ with $0\leq\eta\leq 1$,
$\eta=1$ on $[-1,1]^n$, $\eta=0$ on $\R^n\setminus [-3/2,3/2]$ and $|D\eta|\leq
4$. Set
\[
\begin{split}
  \eta_i(x):=&\eta\left(\frac{x-x_i}{|Q_i|}\right)\,,\\
    \varphi_i(x):=&\frac{\eta_i(x)}{\sum_{j\in\N}\varphi_j(x)}\,.
  \end{split}
\]
Note that $\{\varphi_i\}_{i\in\N}$ is a partition of unity of $\R^n\setminus K$
subordinate to $\tilde W:=\{\tilde Q_i:i\in\N\}$. We define the extension
$\tilde f$ by
\[
\tilde f(x):=\begin{cases}\sum_{i\in\N}\varphi_i(x)f(p_i)&\text{ for }x\in
  \R^n\setminus K\\f(x) &\text{ for }x\in K\,.\end{cases}
\]
From this definition, we immediately get 
\begin{equation}
\sup_{\R^n}|\tilde f|\leq \|f\|_{C^0(\R^n)}\label{eq:58}\,.
\end{equation}
Obviously, on  $\R^n\setminus K$, $\tilde f$ is a smooth function, and there exists a number
$N=N(n)$ such that for each
$x\in \R^n\setminus K$, 
 there exist at most $N$ pairwise disjoint $Q_i\in W$ such that
 $\varphi_i(x)\neq 0$.\\
Fix $x\in\R^n\setminus K$.
 Let $\mathcal N(x)\subset \N$ denote the index set
defined by $\varphi_i(x)\neq 0$ for $i\in \mathcal N(x)$, and let $i_0\in
\mathcal N(x)$. We compute
\begin{equation}
\begin{split}
  |D\tilde f(x)|=&\left|\sum_{i\in\mathcal N(x)}D\varphi_i(x)f(p_i)\right|\\
  =&\left|\sum_{i\in\mathcal
      N(x)\setminus\{i_0\}}D\varphi_i(x)(f(p_i)-f(p_{i_0}))\right|\\
  \leq &C(n)\sum_{i\in\mathcal N(x)}\frac{[f]_\alpha|p_i-p_{i_0}|^\alpha}{\dist(Q_i,K)}\\
  \leq & C(n) [f]_\alpha \dist(x,K)^{\alpha-1}\,.
\end{split}\label{eq:56}
\end{equation}
With these preparations, we are ready to prove an estimate on $[\tilde
f]_\alpha$. Let $x,y\in \R^n$, and let $x',y'\in \R^n$ with 
\[
|x-x'|=\dist(x,K)\,,\quad |y-y'|=\dist(y,K)\,.
\]
Note that this choice implies that for every point $z$ on the line segment
$[x,x']$, we have $|D\tilde f(z)|\leq C[f]_\alpha|z-x'|^{\alpha-1}$ by
\eqref{eq:56}, and an analogous statement for the line segment $[y,y']$.
Hence 
\[
\begin{split}
  |\tilde f(x)-\tilde f(x')|= & \left|\int_0^{|x-x'|} \frac{\d}{\d
      t}\tilde f\left(x+t\frac{x-x'}{|x-x'|}\right)\d t\right|\\
  \leq & \int_0^{|x-x'|} \left|D\tilde
    f\left(x+t\frac{x-x'}{|x-x'|}\right)\right|\d t\\
  \leq & C(n)[f]_\alpha \int_0^{|x-x'|} t^{\alpha-1}\d t\\
  \leq &C(n,\alpha)[f]_\alpha |x-x'|^\alpha\,.
\end{split}
\]
In the same way, we obtain $|\tilde f(y)-\tilde f(y')|\leq C[f]_\alpha
|y-y'|^\alpha$. \\
 Now assume that $|x-x'|+|y-y'|\leq 4|x-y|$. Then
\[
\begin{split}
  |f(x)-f(y)|\leq &|\tilde f(x)-\tilde f(x')|+|\tilde f(x')-\tilde
  f(y')|+|\tilde f(y')-\tilde f(y)|\\
  \leq & C(n,\alpha)[f]_\alpha(|x-x'|^\alpha+|y-y'|^\alpha+|x'-y'|^\alpha)\\
  \leq & C(n,\alpha)[f]_\alpha|x-y|^\alpha\,.
\end{split}
\]
On the other hand, if $|x-x'|+|y-y'|> 4|x-y|$, then we may assume $|x-x'|>2|x-y|$
and hence the line segment $[x,y]$ is contained in $\R^n\setminus K$, and for
each point $z\in [x,y]$, we have $|D\tilde f(z)|\leq
C[f]_\alpha|x-y|^{\alpha-1}$. Then we get
\[
\begin{split}
  |\tilde f(x)-\tilde f(y)|= & \left|\int_0^{|x-y|} \frac{\d}{\d
      t}\tilde f\left(x+t\frac{x-y}{|x-y|}\right)\d t\right|\\
  \leq & \int_0^{|x-y|}C(n)[f]_\alpha|x-y|^{\alpha-1}\d t\\
  \leq & C(n,\alpha)[f]_\alpha|x-y|^{\alpha}\,.
\end{split}
\]
This proves $[\tilde f]_\alpha\leq C(n,\alpha)[f]_\alpha$, and together with
\eqref{eq:58}, this proves the claim of the present lemma.
\end{proof}

The following lemma is a well known fact from real interpolation theory; we
include it for the non-specialist reader.

\begin{lemma}
\label{lem:holdtrace}
Let $U\subset\R^n$ be open and bounded, 
 $u\in C^{0,\alpha}(U)$, and $v:\R^+\to C^1(U)$ with $v'(t)\in C^0(U)$ for all $t>0$ such that 
 \begin{itemize}
 \item $\|t^{1- \alpha} v_i(t)\|_{ L^\infty(\R^+;C^1(U))}\leq
   C\|u\|_{C^{0,\alpha}(U)}$
 \item $\| t^{1-\alpha} v_i'(t)\|_{ L^\infty(\R^+;C^0(U))}\leq
   C\|u\|_{C^{0,\alpha}(U)}$
 \item $ \lim_{t\to 0} v_i(t)= u_i$ in $C^{0}(U)$\,.
 \end{itemize}
Then
\[
\|v(t)\|_{C^{0,\alpha}(U)}\leq C\|u\|_{C^{0,\alpha}(U)}\text{ for }t\leq
1\,.
\]
\end{lemma}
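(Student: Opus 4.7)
The plan is to control $\sup_U|v(t)|$ and the H\"older semi-norm $[v(t)]_\alpha$ separately, using only the three hypotheses on $v$ and $v'$, plus an interpolation-type case split on $|x-y|$ versus $t$.

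First I would use the fact that $v(s)\to u$ in $C^0(U)$ as $s\to 0$ together with the hypothesis $\|v'(s)\|_{C^0(U)}\leq C s^{\alpha-1}\|u\|_{C^{0,\alpha}(U)}$ to write, for any $t>0$,
\[
v(t)-u=\int_0^t v'(s)\,\d s\quad\text{in }C^0(U),
\]
so that
\[
\|v(t)-u\|_{C^0(U)}\leq C\|u\|_{C^{0,\alpha}(U)}\int_0^t s^{\alpha-1}\d s=\tfrac{C}{\alpha}\|u\|_{C^{0,\alpha}(U)}\,t^\alpha.
\]
For $t\leq 1$ this already gives $\sup_U|v(t)|\leq C\|u\|_{C^{0,\alpha}(U)}$.

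Next, for the H\"older semi-norm, I would split the estimate into two regimes depending on $|x-y|$ and $t$. For $x,y\in U$ with $|x-y|\leq t$, I would use the Lipschitz bound coming from the $C^1$-hypothesis:
\[
|v(t)(x)-v(t)(y)|\leq \|v(t)\|_{C^1(U)}|x-y|\leq C\|u\|_{C^{0,\alpha}(U)}\,t^{\alpha-1}|x-y|,
\]
and rewrite the right-hand side as $C\|u\|_{C^{0,\alpha}(U)}(|x-y|/t)^{1-\alpha}|x-y|^{\alpha}$; since $|x-y|/t\leq 1$, this is at most $C\|u\|_{C^{0,\alpha}(U)}|x-y|^{\alpha}$. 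For $|x-y|>t$, I would instead compare $v(t)$ to $u$ at the endpoints:
\[
|v(t)(x)-v(t)(y)|\leq 2\|v(t)-u\|_{C^0(U)}+[u]_\alpha|x-y|^\alpha\leq C\|u\|_{C^{0,\alpha}(U)}(t^\alpha+|x-y|^\alpha),
\]
and absorb $t^\alpha\leq |x-y|^\alpha$.

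Combining the two cases yields $[v(t)]_{C^{0,\alpha}(U)}\leq C\|u\|_{C^{0,\alpha}(U)}$, and together with the sup bound from the first step this gives the claimed estimate for all $t\leq 1$. There is no real obstacle here; the only point requiring care is the integrability of $s^{\alpha-1}$ near $0$ (which holds because $\alpha>0$) and the clean book-keeping of the two regimes in the H\"older semi-norm estimate so that the constant $C$ depends only on $\alpha$ (and the absolute constants in the three hypotheses).
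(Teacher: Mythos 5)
Your proposal is correct and follows essentially the same route as the paper's proof: the sup bound via $v(t)-u=\int_0^t v'(s)\,\d s$ and the integrability of $s^{\alpha-1}$, followed by the same case split $|x-y|\leq t$ (Lipschitz bound from the $C^1$ hypothesis) versus $|x-y|>t$ (triangle inequality through $u$). No further comment is needed.
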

\begin{proof}
We first observe for $x\in\R^n$, and $t\leq 1$,
\[
\begin{split}
  | v(t)(x)|\leq &\left|u(x)+\int_0^t v'(s)(x)\d s\right|\\
    \leq &\|u\|_{C^0(U)}+\int_0^tC \|u\|_{C^{0,\alpha}(U)}s^{\alpha-1}\d s\\
    \leq & C \|u\|_{C^{0,\alpha}(U)}\,.
  \end{split}
\]
Now let $x,y\in \R^n$ with $x\neq y$.
First assume $|x-y|\leq
t$. Then
\[
\begin{split}
  |v(t)(x)-v(t)(y)|\leq &|x-y|\sup_{\R^n} |Dv(t)|\leq C\|u\|_{C^{0,\alpha}(U)} |x-y|
  t^{\alpha-1}\\
  \leq &C\|u\|_{C^{0,\alpha}(U)} |x-y|^\alpha\,.
\end{split}
\]
On the other hand, if $|x-y|>t$, then
\[
\begin{split}
  |v(t)(x)-v(t)(y)|\leq& |v(t)(x)- u(x)|+| u(x)- u(y)|+|
  u(y)-v(t)(y)|\\
  \leq & C\|u\|_{C^{0,\alpha}(U)} \left(2t^\alpha+|x-y|^\alpha\right)\\
 \leq &C\|u\|_{C^{0,\alpha}(U)} |x-y|^\alpha\,.
\end{split}
\]
This proves $[v(t)]_\alpha\leq C\|u\|_{C^{0,\alpha}(U)} $ and hence the claim of the lemma.
\end{proof}

\bibliographystyle{plain}
\bibliography{rigid}

\end{document}